\documentclass[12pt,leqno]{amsart} 
\usepackage{amsmath,amstext,amsthm,amssymb,amsxtra}
\usepackage[backrefs]{amsrefs}
\setlength{\textwidth}{16.6cm}
\setlength{\topmargin}{0cm}
\setlength{\oddsidemargin}{0cm}
\setlength{\evensidemargin}{0cm}
\setlength{\parskip}{11pt}
\allowdisplaybreaks
 
\theoremstyle{plain} 
\newtheorem{lemma}[equation]{Lemma} 
\newtheorem{proposition}[equation]{Proposition} 
\newtheorem{theorem}[equation]{Theorem} 
\newtheorem{corollary}[equation]{Corollary} 
\newtheorem*{maintheorem}{Main Theorem}
\newtheorem*{weakfactor}{Weak Factorization Theorem}
\newtheorem{pCZ}[equation]{Product Calder\'on--Zygmund Kernels}
\newtheorem{extended}[equation]{Extension of Main Theorem}

\theoremstyle{definition}

\theoremstyle{remark}
\newtheorem*{remark}{Remark}
\newtheorem*{Acknowledgment}{Acknowledgment}

\numberwithin{equation}{section}


\def\norm#1.#2.{\lVert#1\rVert_{#2}}
\def\Norm#1.#2.{\bigl\lVert#1\bigr\rVert_{#2}}
\def\NOrm#1.#2.{\Bigl\lVert#1\Bigr\rVert_{#2}}
\def\NORm#1.#2.{\biggl\lVert#1\biggr\rVert_{#2}}
\def\NORM#1.#2.{\Biggl\lVert#1\Biggr\rVert_{#2}}


\def\ip#1,#2,{\langle #1,#2\rangle}
\def\Ip#1,#2,{\bigl\langle#1,#2\bigr\rangle}
\def\IP#1,#2,{\Bigl\langle#1,#2\Bigr\rangle}

\def\mid{\,:\,}

\def\abs#1{\lvert#1\rvert}

\def\ABs#1{\Bigl\lvert#1\Bigr\rvert}

\def\XXint#1#2#3{{\setbox0=\hbox{$#1{#2#3}{\int}$}
     \vcenter{\hbox{$#2#3$}}\kern-.5\wd0}}

\def\eqdef{\stackrel{\mathrm{def}}{{}={}}}

%

%

%
%
%

%
%
%
%
%
%

%
%
\DeclareFontFamily{U}{wncy}{}
\DeclareFontShape{U}{wncy}{m}{n}{<->wncyr10}{}
\DeclareSymbolFont{mcy}{U}{wncy}{m}{n}
\DeclareMathSymbol{\Sh}{\mathord}{mcy}{"58}  

\begin{document}
\title {Multiparameter Riesz Commutators}
\author[M.T. Lacey\and S. Petermichl\and J.C. Pipher\and B.D. Wick]{Michael T. Lacey$^1$\and Stefanie Petermichl$^2$ \and Jill C. Pipher$^3$ \and Brett D. Wick$^4$}

\address{Michael T. Lacey\\
School of Mathematics\\
Georgia Institute of Technology\\
Atlanta, GA 30332\\
}
\email{lacey@math.gatech.edu}
\thanks{$1.$ Research supported in part by a National Science Foundation Grant. The author is a Guggenheim Fellow.}

\address{Stefanie Petermichl\\ Department of Mathematics\\ University of Texas at Austin\\ Austin, TX  78712\\}
\email{stefanie@math.utexas.edu}
\thanks{$2.$ Research supported in part by a National Science Foundation Grant.}

\address{Jill C. Pipher\\ Department of Mathematics\\ Brown University\\ Providence, RI 02912\\}
\email{jpipher@math.brown.edu}
\thanks{$3.$ Research supported in part by a National Science Foundation Grant.}

\address{Brett D. Wick\\ Department of Mathematics\\ Vanderbilt University\\ Nashville, TN 37240\\}
\email{brett.d.wick@vanderbilt.edu}
\thanks{$4.$ Research supported in part by a National Science Foundation RTG Grant to Vanderbilt University.}

\begin{abstract}
It is shown that product BMO of S.-Y.\thinspace A.\thinspace Chang and R. Fefferman, 
defined on the space $\mathbb R^{d_1}\otimes \cdots 
\otimes \mathbb R ^{d_t}$, 
can be characterized by the multiparameter commutators of Riesz transforms.
This extends a classical 
one-parameter  result of R.~Coifman, R.~Rochberg, and G.~Weiss  
\cite{MR54:843}, and at the same time extends the work of 
M.~Lacey and S.~Ferguson \cite{sarahlacey} and M.~Lacey and E.~Terwilleger
\cite{math.CA/0310348}, on multiparameter commutators 
with Hilbert transforms. 
\end{abstract}

\maketitle

\section{Introduction}

 In one parameter, a classical result of Coifman, Rochberg 
 and Weiss \cite{MR54:843}, in turn an extension of the result of Nehari 
 \cite{nehari}, 
 shows that a function in the Hardy space  $ H^1$ on the 
 ball can be weakly factored as a sum of products of functions in $ H^2$  on the ball.  Recently, 
 Ferguson and Lacey 
\cite{sarahlacey} and  Lacey and Terwilleger \cite{math.CA/0310348}  proved the corresponding weak 
factorization for $ H^1$ of the polydisc.  In this paper, we prove the real variable 
generalization of these two sets of results.
 
 Let $ \operatorname M _b \varphi \eqdef b \cdot \varphi $ be the operator of pointwise multiplication 
 by a function $ b$.  
 For Schwartz functions $ f$ on  $ \mathbb R^{d}$, let $ \operatorname R_j f$ denote the 
$ j$th Riesz transform of $ f$, for $ 1\le j\le d$. From time to time, we will 
use the notation $ \operatorname R_0$ for the identity operator. 

We are concerned with product spaces $ \mathbb R ^{\vec d}=\mathbb R^{d_1}\otimes \cdots \otimes \mathbb R ^{d_t}$ for vectors $ \vec d=(d_1,\ldots , d_t)\in
\mathbb N ^{t}$. 
For Schwartz functions $b,f$ on $ \mathbb R ^{\vec d}$, 
and for a vector $\vec \jmath =(j_1,\ldots, j_t)$ with $1\leq j_s\leq d_s$ for $s=1,\ldots,
t$ we consider the family of commutators
\begin{equation} \label{e.commutators}
\operatorname C _{\vec \jmath }(b, f)
\eqdef [\cdots[[\operatorname M_b, \operatorname R _{1,\, j_1}], 
\operatorname R _{2,\, j_2}],\cdots], \operatorname R _{t,j_t} ]f
\end{equation}
where  $ \operatorname R _{s,\, j}$ denotes the $ j$th Riesz transform
acting on $ \mathbb R ^{d_s}$.

\begin{maintheorem}  We have the estimates below, valid for $ 1<p<\infty $. 
\begin{equation}\label{e.BMO}
\sup _{\vec \jmath }\norm \operatorname C_{\vec \jmath }(b, \cdot  ).p \to p. 
\simeq   \norm b.\textup{BMO}.\,.
\end{equation}
By $ \textup{BMO}$, we mean Chang--Fefferman $ \textup{BMO}$. 
\end{maintheorem}

To establish this result, we find it necessary to  prove an extended 
version of this Theorem, proving the equivalence of norms not only for the 
Riesz transforms, but also a class of singular integral operators whose symbols  
are supported on cones.  In this, and other ways, our methods shed 
new light on issues related to commutators even in the one parameter case.

It is well known that the result above has an equivalent formulation in 
terms of \emph{weak factorization} of Hardy space; indeed, this equivalence is 
important to the proof of the Theorem. For $\vec \jmath $ a 
vector with $1\leq j_s\leq d_s$, and $s=1,\ldots, t$, let $\operatorname \Pi_{\vec \jmath }$ 
be the bilinear operator defined by the following equation
$$
\langle C_{\vec \jmath }(b,f),g\rangle\eqdef\langle b, \Pi_{\vec \jmath }(f,g)\rangle.
$$
One can express $\operatorname \Pi_{\vec \jmath }$ as a linear combination of 
products of iterates of Riesz transforms, $\operatorname R_{s,j_s}$, applied
to the $f$ and $g$.  It follows immediately by duality from the Main Theorem 
that for sequences $f_k^{\vec \jmath },g_k^{\vec \jmath }\in L^2(\mathbb R^{\vec d})$ with 
$\sum_{\vec \jmath }\sum_{k=1}^\infty\norm f_k^{\vec \jmath }.2.\norm g_k^{\vec \jmath }.2.<\infty$ 
we have
$$
\sum_{\vec \jmath }\sum_{k=1}^\infty\operatorname \Pi_{\vec \jmath }
(f_k^{\vec \jmath },g_k^{\vec \jmath })\in H^1(\mathbb R^{\vec d}).
$$

With this observation, we define 
\begin{equation}\label{e.LotimesL}
L^2(\mathbb R^{\vec d})\widehat\odot L^2(\mathbb R^{\vec d})\eqdef \Bigl\{
f\in L^1(\mathbb R^{\vec d}) : f=\sum_{\vec \jmath }\sum_{k=1}^\infty
\operatorname\Pi_{\vec \jmath }(f_k^{\vec \jmath },g_k^{\vec \jmath })\Bigr\}\,.
\end{equation}
This is the projective  product given by 
$$
\norm f.L^2(\mathbb R^{\vec d})\widehat\odot L^2(\mathbb R^{\vec d}).\eqdef
\inf\Bigl\{\sum_{\vec \jmath }\sum_k\norm f_k^{\vec \jmath }.2.
\norm g_k^{\vec \jmath }.2.\Bigr\}
$$
where the infimum is taken over all  decompositions of $ f$ as in 
(\ref{e.LotimesL}).  This definition has an obvious extension to 
$ L^p \widehat\odot L ^{p'} $, for $ 1\le p,p'<\infty $. 
We  have the following corollary.

\begin{weakfactor}
For any $ 1<p<\infty $, let $ p'=p/(p-1)$ be the conjugate index.  We have 
$ H^1(\mathbb R^{\vec d})= L^p(\mathbb R^{\vec d})\widehat\odot L^{p'}(\mathbb R^{\vec d})$. 
Namely, for any $f\in H^1(\mathbb R^{\vec d})$ 
there exist sequences $f_k^{\vec \jmath }\in L^p(\mathbb R^{\vec d})$ 
and $ g_k^{\vec \jmath }\in L ^{p'}(\mathbb R^{\vec d})$
such that
\begin{gather*}
f=\sum_{\vec \jmath } \sum_{k=1}^{\infty}\operatorname
\Pi_{\vec \jmath }(f_k^{\vec \jmath },g_k^{\vec \jmath })\,,
\\
\norm f.H^1. \simeq \sum_{\vec \jmath }
\sum _{k} \norm f_k^{\vec \jmath }.p. \norm g_k^{\vec \jmath }.p'. \,.
\end{gather*}
\end{weakfactor}


The result of  Coifman, Rochberg 
and Weiss \cite{MR54:843} has found a number of 
further applications.  The original paper includes a weak factorization result for 
certain Bergman spaces, and there is the striking application to the theory of 
compensated compactness \cite{MR1225511}. We anticipate that some of these 
applications persist into the higher parameter setting of this paper, but we do not 
purse these points in this paper. 

The proofs given here are rather different from that of Coifman, Rochberg 
and Weiss \cite{MR54:843}.  
Their proof of the upper bound on commutator norms relies upon a sharp 
function inequality, a method of proof that is quite powerful in the one 
parameter setting.  This method admits only a weak extension to the 
higher parameter setting;  instead, our proof of the upper bound, 
namely Theorem~\ref{t.upper}, 
follows from the decomposition of the commutators into a sum of simpler terms. 
These terms are  paraproducts, composed on either side 
by Calder\'on--Zygmund operators.  This method has been used in different settings, 
such as Petermichl  \cite{MR1756958}, and Lacey \cite{math.CA/0502336}. 
Our formalization of this method in this paper could lead to further applications of this
method. 

The paraproducts that arise are of multiparameter form.  The specific 
result needed
is Theorem~\ref{t.MainParaproducts} below. 
This result is due to Journ\'e \cite{MR88d:42028}; 
more recent 
discussions of paraproducts  are in  \cites{camil1,camil2,math.CA/0502334}.

For the lower bound, namely Theorem~\ref{t.lower},
we use the strategy of Ferguson--Lacey \cite{sarahlacey} and 
Lacey--Terwilleger \cite{math.CA/0310348}. One inducts on parameters, 
using a boot-strapping argument, 
and the Journ\'e Lemma \cite{MR87g:42028}.  
However, to implement this strategy, we have to prove a second version 
of the Main Theorem, one in which the Riesz transforms are replaced by the 
a family of Calder\'on--Zygmund operators whose symbols are adapted to cones. 
These kernels are described in the text preceding (\ref{e.ConeNorm}), and 
Theorem~\ref{t.extended} is the extended version of our Main Theorem.  

To start the induction, in the case of Riesz transforms, we can of course 
use the Coifman, Rochberg, and Weiss result.  But for the cones, we appeal 
to the results of   Uchiyama \cite{MR0467384} and 
Song-Ying Li \cite{MR1373281} which are deep extensions of the work of 
Coifman, Rochberg and Weiss. 

In \S~\ref{s.wavelet}---\ref{s.para} we recall different aspects of the 
multiparameter theory in forms required for this investigation. 
\S\ref{s.upper} introduces the cone operators, and this establishes the upper 
bound on commutator norms.  The initial stages of the 
lower bound on commutator norms is proved in  
\S\ref{s.lower}.  The more refined bootstrapping argument occupies 
\S\ref{s.boot}. 

\begin{Acknowledgment}
The authors benefited from the Banff Research Station, through the 
Research in Teams program, and a very pleasant stay at the 
Universit\'e de Bordeaux.  The authors also enjoyed a stay at Texas A\&M University.
\end{Acknowledgment}

\section{Wavelets in Several Dimensions} \label{s.wavelet}

This discussion is initially restricted to a one parameter setting. 
We will use dilation and translation operators on $ \mathbb R ^{d}$
\begin{align}\label{e.trans}
\operatorname {Tr} _{y} f(x) &\eqdef f(x-y)\,,\quad y\in \mathbb R ^{d}, 
\\
\label{e.dil}
\operatorname {Dil} _{a} ^{(p)} f(x) & \eqdef  a ^{-d/p} f(x/a)\,, 
\quad a>0\,,\ 0<p\le \infty \,.
\end{align}
These will also be applied to sets, in an obvious fashion, in the case of $ p= \infty $.  

By the \emph{ ($ d$ dimensional) dyadic grid} in $ \mathbb R ^{d}$ we mean the collection of cubes 
\begin{equation*}
\mathcal D_d \eqdef \bigl\{ j 2 ^{k}+[0,2^k)^ d\mid j \in \mathbb Z ^{d}\,,\ 
k\in \mathbb Z \bigr\}.
\end{equation*}

 \emph{Wavelets} arise from a  mean zero Schwartz function $ w$, 
a \emph{scaling function}, expressible
 as $ W(x)-2W (2x)$, for a \emph{father wavelet $ W$}. The principle 
requirement is that the functions 
 $ \{\operatorname{Tr}_{c(I)}\operatorname {Dil} ^{(2)} _I w \mid I\in \mathcal D_1\}$ form 
 an orthonormal basis for $ L ^{2} (\mathbb R )$.  Except for the fact that 
 it is not smooth, $  h= -\mathbf 1 _{(0,1/2)}+\mathbf 1 _{(1/2,1)}$ 
 is a scaling function, with father wavelet $ \mathbf 1 _{(0,1)}$. 
 This generates the Haar basis for $ L^2 (\mathbb R )$. 
 
  For $ \varepsilon \in \{0,1\}$, set $ w^0=w$ and $ w ^{1}=W$, 
the superscript $ {}^0$  denoting that `the function has mean $ 0$,' 
while a superscript $ {}^1$ denotes that 
`the function is an  $ L^2$ normalized indicator function.'
In one dimension, for an interval $ I$, set 
\begin{equation*}
w ^{\varepsilon } _{I} \eqdef \operatorname {Tr} _{c(I)} \operatorname {Dil} _{\abs{ I}} 
^{(2)} w ^{\varepsilon }\,.
\end{equation*}
The father wavelet is of some convenience to us, as we have the useful 
facts, valid on the interval $J$\footnote{Technically, these results are only true for \emph{multiresolution 
analysis (MRA)} wavelets.  Both the Haar and Meyer wavelets are MRA wavelets. } 
\begin{equation}\label{e.FATHER}
\sum _{I\supsetneq J} \ip f,w_I, w_I = \ip f, w ^{1}_J, w ^{1}_J\,,
\end{equation}

 We will use the \emph{Meyer wavelet} in later sections of the paper. 
 This wavelet, found by Y.~Meyer \cites {MR1009177,MR1085487},  
arises from a Schwartz scaling function $ w $, with $ \widehat w  $ 
supported on $  1/3\le \abs{ \xi } \le 8/3$.  Indeed, $ \widehat w $ is 
identically equal to 1 on the intervals $ 1\le \abs{ \xi } \le 2 $. 
The `father wavelet' 
$ W$ is a Schwartz function with $ \widehat W $ supported on $ \abs{ \xi }<2$, 
so that  $ w(x)=W(x)-2 W(2x)$.  
One of the reasons this is such a useful wavelet for us is the fact below 
which is exploited several times.  
\begin{equation}\label{e.Fourier}
8\abs{ I}< \abs{ I'} \quad \textup{implies} \quad \textup{$\widehat{  w_I \cdot w _{I'}}$ 
is supported on $ (4 \abs{ I}) ^{-1} < \abs{ \xi }< 3 \abs{ I} ^{-1}  $.}
\end{equation}
%

\bigskip

Let $\textup {Sig}_{d} \eqdef \{0,1\}^d-\{\vec 1\}$, which we refer to as 
\emph{signatures}.
In $ d$ dimensions, for a cube $Q$ with side $\abs{I}$, i.e.,  $Q=I_1\times \cdots \times I_d$, and a choice of $ \varepsilon\in\textup{Sig}_d$, set 
\begin{equation*}
w ^{\varepsilon} _Q (x_1,\dotsc,x_d)\eqdef \prod _{j=1} ^{d} w _{I_j} ^{\varepsilon _j} (x_j).
\end{equation*}
It is then the case that the collection of functions 
\begin{equation*}
\operatorname {Wavelet} _{\mathcal D_d} \eqdef \{ w _{Q} ^{\varepsilon } 
\mid Q\in \mathcal D_d\,, \ \varepsilon\in\textup{Sig}_d\}
\end{equation*}
form a wavelet basis for $ L^p (\mathbb R ^{d})$ for any choice of  $ d$ dimensional 
dyadic grid $ \mathcal D_d$. 
Here, we are using the notation $ \vec 1=(1,\dotsc,1)$.  While we exclude the 
superscript $ {} ^{\vec 1}$ here, it plays a role in the theory of 
paraproducts.

We will use these bases in the tensor product setting. Thus, for 
a vector $ \vec d=(d_1,\dotsc,d_t)$, and $ 1\le s\le t$, 
let $ \mathcal D _{d_s}$ be a choice of $ d_s$ dimensional dyadic grid, and let 
\begin{equation*} 
\mathcal D _{\vec d}=\otimes _{s=1} ^{t} \mathcal D _{d_s}\,. 
\end{equation*}
Also, let $\textup {Sig}_{\vec d} \eqdef \{\vec\varepsilon=(\varepsilon_1,\ldots,\varepsilon_t):\varepsilon_s\in\textup{Sig}_{d_s}\}$.
Note that each $\varepsilon_s$ is a vector, and so $\vec\varepsilon$ is a 
`vector of vectors'.  For a rectangle $ R=Q_1\times \cdots  \times Q_t$, being a product of cubes 
of possibly different dimensions, and a choice of vectors
$ \vec\varepsilon\in\textup {Sig}_{\vec d}$
set 
\begin{equation*}
w _{R} ^{\vec \varepsilon }(x_1,\dotsc,x_t)=\prod _{s=1}^t w _{Q_s} ^{\varepsilon _s}(x_s).
\end{equation*}
These are the appropriate functions and bases to analyze multiparameter 
paraproducts and commutators.  

Let
\begin{equation*}
\operatorname {Wavelet} _{\mathcal D _{\vec d}} \eqdef \bigl\{ w _{R} ^{\vec \varepsilon }\mid 
R\in \mathcal D _{\vec d}\,, \ \vec\varepsilon\in\textup {Sig}_{\vec d}\bigr\}\,. 
\end{equation*}
This is a basis in $ L^p(\mathbb R ^{\vec d})$, where we will use the notation 
\begin{equation*}
\mathbb R ^{\vec d} \eqdef \mathbb R ^{d_1}\otimes \cdots \otimes \mathbb R ^{d_t}
\end{equation*}
to emphasize that we are in a tensor product setting. 


\section{Chang--Fefferman $ \textup{BMO}$} \label{s.cfBMO}

We describe the elements of product Hardy space theory, 
as developed by S.-Y.~Chang and R.~Fefferman \cites{cf1,cf2,
MR90e:42030,MR86f:32004,MR81c:32016} as well as Journ\'e \cites{MR87g:42028,MR88d:42028}.  
By this, we mean 
the Hardy spaces associated with domains like $\otimes_{s=1}^t \mathbb R^{d_s}$.

\begin{remark}\label{r.H(Rd)}
The (real) Hardy space $ H^1 (\mathbb R ^{d})$ typically denotes the class of functions 
with the norm 
\begin{equation*}
\sum _{j=0} ^{d} \norm \operatorname R_j f.1.
\end{equation*}
where $ \operatorname R_j$ denotes the $ j$th Riesz transform. 
Here and below we adopt the convention that $ \operatorname R_0$, 
the $ 0$th Riesz transform, is the identity. 
This space is 
invariant under the one parameter family of isotropic dilations, while   $ H^1(\mathbb R^{\vec d}) $ 
is invariant under dilations of each coordinate separately. That is, it is invariant 
under a $t$ parameter family of dilations, hence the terminology  `multiparameter' 
theory.
\end{remark}

As before, the space $H^1(\mathbb R^{\vec d}) $ has a variety of equivalent norms, 
in terms of square functions, maximal functions and Riesz transforms.  
For our discussion of paraproducts, it is appropriate to make some definitions of translation 
and dilation operators which extend the definitions in (\ref{e.trans})---(\ref{e.dil}). 
(Indeed, here we are adopting broader notation than we really need, in anticipation 
of a discussion of multiparameter paraproducts.) 
Define 
\begin{align}\label{e.translated}
\operatorname {Tr} _{y}f(x) &\eqdef f(x-y),\qquad y\in \mathbb R ^{\vec d}\,,
\\ \label{e.dilated}
\operatorname {Dil} _{a_1,\dotsc,a_t}^p f(x_1,\dotsc,x_t) 
&\eqdef \prod_{s=1}^ta_s^{-{d_s}/p}f(x_1/a_1,\dotsc,x_t/a_t)\,,\qquad a_1,\dotsc,a_t>0\,,
\\ \label{e.transDild}
\operatorname {Dil} _{R} ^{p} &\eqdef \operatorname {Tr} _{c(R)} \operatorname {Dil} 
_{\abs{ Q_1},\dotsc , \abs{Q_d}} ^{p}\,.
\end{align}
In the last definition $ R=Q_1\times\cdots\times Q_t$ is a rectangle, each $Q_s$ 
is a cube and the dilation 
incorporates the locations and scales associated with $ R$.  $ c (R)$ is the center of $ R$.

For a non-negative smooth bump function $ \varphi ^{1}$ in $\mathbb R^{\vec d}$ with $\int \varphi ^{1}\; dx=1$, 
define the (strong) maximal function by 
\begin{equation*}
\operatorname M f(x)\eqdef\sup _{R\in \mathcal D_{\vec d}} 
\operatorname {Dil} _{R} ^{2} \varphi^1 (x) \ip f,\operatorname {Dil} _{R} ^{2}\varphi ^{1},  .
\end{equation*}

For $s=1,\ldots, t$, choose radial bump functions $\varphi^{0}_s$ on $\mathbb R^{d_s}$ with $ \int _{\mathbb R^{d_s} } \varphi_s^{0} \; dx_s=0$ and
\begin{equation*}
\sup _{\xi}\int_{0}^\infty\abs{\widehat{\varphi^0_s}(t\xi)}^2\frac{dt}{t}<\infty\,.
\end{equation*}
Then, fix $\varphi ^{0}$ so that 
\begin{equation*}
\varphi ^{0}(x_1,\dotsc,x_t)\eqdef\prod _{s=1} ^{t}\varphi^{0}_s (x_s).
\end{equation*}
As an analog of the Littlewood--Paley square function, set 
\begin{equation*}
\operatorname S f(x)\eqdef
\Bigl[\sum _{R\in \mathcal D_{\vec d}} 
[\operatorname {Dil} _{R} ^{2} \varphi^0 (x)] ^{2}
\abs{ \ip f,\operatorname {Dil} _{R} ^{2} \varphi ^{0},  } ^2\Bigr] ^{1/2} .
\end{equation*}

\begin{theorem}[Equivalent forms of $ H^1$ norm] 
\label{t.H^1equiv}
All of the norms below are equivalent, and can be used as 
a definition of $H ^{1} (\mathbb R^{\vec d})$. 
\begin{gather*}
\norm \operatorname M f.1.
\simeq 
\norm f.1.+
\norm \operatorname S f.1.
\simeq 
\sum_{\substack{\vec 0\le \vec \jmath \le \vec d }}   
\NOrm \prod _{s=1} ^{t} \operatorname R_{s,j_s} f.1.\,.
\end{gather*}
 $ \operatorname R_{s,j_s}$ is  the Riesz transform 
computed in the $j_s$th direction of the $s$th variable, and the 
$ 0$th Riesz transform is the identity operator. 
\end{theorem}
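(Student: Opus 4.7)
The plan is to establish these three equivalences through the Chang--Fefferman theory of product Hardy space implicit in the references cited earlier in this section; the argument has three substantive steps plus one delicate covering issue.

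First, to prove $\NOrm \operatorname S f.1. \lesssim \NOrm \operatorname M f.1.$, I would use a Calder\'on reproducing formula in each parameter separately, so that $\operatorname S f$ becomes a vector-valued superposition of $\operatorname{Dil}_R^{2}\varphi^0 \ast f$. The pointwise bound $\abs{(\operatorname{Dil}_R^{2}\varphi^0 \ast f)(x)} \lesssim \inf_{y\in R}\operatorname M f(y)$ on a rectangle $R$ containing $x$, combined with the iterated (one parameter at a time) Fefferman--Stein vector-valued maximal inequality, delivers the $L^1$ bound. The trivial inequality $\NOrm f.1. \le \NOrm \operatorname M f.1.$ then closes one side of the first equivalence.

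Second, for the reverse $\NOrm \operatorname M f.1. \lesssim \NOrm f.1. + \NOrm \operatorname S f.1.$, I would invoke the Chang--Fefferman atomic decomposition: any $f$ with $\NOrm \operatorname S f.1.<\infty$ can be written $f=\sum_k \lambda_k a_k$ where each $a_k$ is a product-$H^1$ atom supported in an open set $\Omega_k$, satisfying mean-zero conditions on each maximal dyadic sub-rectangle and a size bound $\norm a_k.2. \le \abs{\Omega_k}^{-1/2}$, with $\sum_k \abs{\lambda_k} \lesssim \NOrm \operatorname S f.1.$. The uniform bound $\NOrm \operatorname M a_k.1. \lesssim 1$ for each atom, together with summing $\lambda_k$, gives the claim. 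This is where Journ\'e's covering lemma \cite{MR87g:42028} is indispensable: since $\operatorname M$ is the strong maximal function, the natural enlargement of $\Omega_k$ needed to capture the off-support mass of $\operatorname M a_k$ has size controlled only via Journ\'e's lemma, not by classical rectangle covers.

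Third, for the Riesz transform characterization, the forward direction $\sum_{\vec \jmath}\NOrm \prod_s \operatorname R_{s,j_s} f.1. \lesssim \NOrm \operatorname M f.1.$ follows because, with the convention $\operatorname R_{s,0}=\operatorname{Id}$, each product $\prod_s \operatorname R_{s,j_s}$ is a product Calder\'on--Zygmund operator in Journ\'e's class, hence bounded on $H^1(\mathbb R^{\vec d})$ and therefore $L^1$-controlled by any of the equivalent $H^1$ norms just established. For the converse, I would combine two ingredients: (a) the Fourier identity $-\operatorname{Id}=\sum_{j=1}^{d_s}\operatorname R_{s,j}^2$ in each parameter $s$, so that iterating over $s=1,\ldots,t$ expresses $f$ as a linear combination of $\prod_s \operatorname R_{s,j_s}$ applied to functions $\prod_s \operatorname R_{s,j_s} f$; and (b) the one-parameter Fefferman--Stein theory lifted parameter-by-parameter, which shows the square function norm is dominated by the sum of $L^1$ norms of Riesz transforms in each variable separately. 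Iterating this one-parameter subordination $t$ times closes the loop.

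The principal obstacle is Step~2: the naive expectation that $H^1(\mathbb R^{\vec d})$ is characterized by rectangle atoms fails, and the correct atomic theory of Chang--Fefferman is non-trivial precisely because the strong maximal operator and the dyadic rectangle geometry interact badly. All of the delicate combinatorics is absorbed into Journ\'e's covering lemma, and verifying the uniform $L^1$ bound on $\operatorname M a_k$ for a Chang--Fefferman atom is the technical heart of the argument.
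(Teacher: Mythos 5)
The paper does not prove this theorem; it states it as background from the product Hardy space literature (Chang--Fefferman, R.\ Fefferman, Gundy--Stein, Journ\'e --- see the references cited at the head of \S\ref{s.cfBMO}), so there is no ``paper's proof'' to compare against. Your sketch is a plausible map of the classical argument, and Steps~2 and~3 are in the right spirit: the atomic decomposition with Journ\'e's covering lemma is indeed the heart of $\|\operatorname M f\|_1 \lesssim \|\operatorname S f\|_1$, and the Riesz characterization does proceed via product Calder\'on--Zygmund theory forward and the identity $-\operatorname{Id}=\sum_j \operatorname R_{s,j}^2$ in the converse.

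However, Step~1 as written does not close. You claim the pointwise bound on $\operatorname{Dil}_R^{2}\varphi^0 * f$ by $\inf_{y\in R}\operatorname M f(y)$, combined with the iterated Fefferman--Stein vector-valued maximal inequality, ``delivers the $L^1$ bound,'' but that vector-valued inequality is an $L^p$ estimate valid only for $1<p<\infty$; at $p=1$ only a weak-type bound survives, and weak-type bounds do not iterate parameter by parameter. Worse, bounding each coefficient $\langle f,\operatorname{Dil}_R^{2}\varphi^0\rangle$ by the maximal function and then reassembling the square sum would give the false pointwise inequality $\operatorname S f \lesssim \operatorname M f$; the square function genuinely aggregates information across scales and is not pointwise dominated by $\operatorname M f$. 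The direction $\|\operatorname S f\|_1 \lesssim \|\operatorname M f\|_1$ requires either a good-$\lambda$ distribution function estimate or the harmonic-extension/boundary-value theory (as in Gundy--Stein), adapted to the product setting; this is a substantive piece of analysis that cannot be reduced to maximal-function domination plus Fefferman--Stein. Step~3(b) is also somewhat loose: $H^1(\mathbb R^{\vec d})$ is not a formal tensor product of one-parameter $H^1$ spaces, so ``lifting parameter by parameter'' needs justification --- the comparison between the product square function and the Riesz transforms should be carried out directly in the product setting rather than assumed to iterate.
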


\subsection{$ \textup{BMO} (\mathbb R^{\vec d})$}

The dual of the real Hardy space is $ H^1(\mathbb R^{\vec d})^\ast=\text{BMO}(\mathbb R^{\vec d})$,
the $t$--fold product $\text{BMO}$ space. It is a Theorem of S.-Y.~Chang and R.~Fefferman 
\cite{cf2} that this space has a characterization 
in terms of a product Carleson measure.

Define 
\begin{equation} \label{e.BMOdef}
\lVert b\rVert_{\text{BMO} ( \mathbb R^{\vec d})}\eqdef
\sup_{ U\subset \mathbb R^{\vec d}} \Bigl[\abs{   U}^{-1} 
\sum_{R\subset U}\sum_{\vec{\varepsilon}\in\textup {Sig}_{\vec d}}\abs{\ip b,w_R^{\vec\varepsilon},}^2\Bigr] ^{1/2} .
\end{equation}
Here the supremum is taken over all open subsets $U\subset \mathbb R^{\vec d}$ with finite measure, 
and we use a wavelet basis $ w_R ^{\vec \varepsilon }$.

\begin{theorem}[Chang--Fefferman $ \textup{BMO}$]\label{t.changfefferman}
We have the equivalence of norms 
\begin{equation*}
\norm f. (H^1 (\mathbb R ^{\vec d})) ^{\ast}. 
\approx \norm f. \textup{BMO}(\mathbb R ^{\vec d}). .
\end{equation*}
That is, $ \textup{BMO}(\mathbb R ^{\vec d})$
is the dual to $H^1 (\mathbb R ^{\vec d})$.
\end{theorem}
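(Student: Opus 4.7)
My plan is to prove the duality by passing to wavelet expansions and applying the open-set stopping-time method of Chang and Fefferman. Expanding both $b$ and $f$ in the tensor wavelet basis $\{w_R^{\vec\varepsilon}\}$ turns the pairing into
\[
\langle b,f\rangle \;=\; \sum_{R\in\mathcal D_{\vec d}}\sum_{\vec\varepsilon\in\textup{Sig}_{\vec d}}\langle b,w_R^{\vec\varepsilon}\rangle\,\langle f,w_R^{\vec\varepsilon}\rangle,
\]
so both inclusions reduce to quantitative statements about wavelet coefficients.

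For $\textup{BMO}\subset(H^1)^*$, fix $f\in H^1(\mathbb R^{\vec d})$, let $\Omega_k=\{\operatorname Sf>2^k\}$, and form enlargements $\widetilde\Omega_k$ via the strong maximal operator so that every dyadic rectangle $R$ with $|R\cap\Omega_k|>\tfrac12|R|$ lies in $\widetilde\Omega_k$. Sort rectangles into families $\mathcal R_k$ according to the largest such $k$. On each family, Cauchy--Schwarz together with (\ref{e.BMOdef}) controls the $b$-factor by $\|b\|_{\textup{BMO}}|\widetilde\Omega_k|^{1/2}$, while the $f$-factor is bounded by $C\,2^k|\widetilde\Omega_k|^{1/2}$ because $\operatorname Sf\lesssim 2^k$ on the bulk of any $R\in\mathcal R_k$. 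Summing in $k$ yields
\[
|\langle b,f\rangle|\;\lesssim\;\|b\|_{\textup{BMO}}\sum_k 2^k|\widetilde\Omega_k|\;\lesssim\;\|b\|_{\textup{BMO}}\,\|\operatorname Sf\|_1\;\simeq\;\|b\|_{\textup{BMO}}\,\|f\|_{H^1},
\]
using the $H^1$-adapted bound on the strong maximal operator of Chang--Fefferman, together with Theorem~\ref{t.H^1equiv}.

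For $(H^1)^*\subset\textup{BMO}$, given $\Lambda\in(H^1)^*$, define $b$ by $\langle b,w_R^{\vec\varepsilon}\rangle:=\Lambda(w_R^{\vec\varepsilon})$. For any open set $U\subset\mathbb R^{\vec d}$ of finite measure, test $\Lambda$ against the candidate extremizer
\[
f_U \;=\; \sum_{R\subset U}\sum_{\vec\varepsilon}\langle b,w_R^{\vec\varepsilon}\rangle\,w_R^{\vec\varepsilon},
\]
whose square function is essentially supported on $U$ and satisfies $\|\operatorname Sf_U\|_2^2=S_U$, where $S_U$ is the bracketed Carleson sum in (\ref{e.BMOdef}). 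Cauchy--Schwarz in space then gives $\|f_U\|_{H^1}\simeq\|\operatorname Sf_U\|_1\le|U|^{1/2}S_U^{1/2}$, and the identity $S_U=\Lambda(f_U)\le\|\Lambda\|\,|U|^{1/2}S_U^{1/2}$ produces $S_U\le\|\Lambda\|^2|U|$, which is precisely the required \textup{BMO} bound.

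The crux of the argument --- and the reason this theorem is due to Chang and Fefferman rather than a routine one-parameter adaptation --- is that product \textup{BMO} admits no John--Nirenberg inequality over rectangles. One must therefore test the Carleson condition against \emph{arbitrary} open sets, and the enlargement $\Omega_k\mapsto\widetilde\Omega_k$ in the easy direction must respect this: the standard $L^1$ weak-type failure of the strong maximal operator forces one to exploit the finer $L(\log L)$ structure coming from $f\in H^1$ rather than merely $f\in L^1$. With this substitution in hand, the wavelet computation is mechanical, and we invoke the Chang--Fefferman theorem directly.
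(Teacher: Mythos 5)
The paper does not prove this theorem; it cites it from Chang and Fefferman \cite{cf2}, so there is no internal argument to compare against. Your sketch does reproduce the standard Chang--Fefferman stopping-time proof, reformulated in the wavelet language the paper has set up, and the broad architecture is right: level sets $\Omega_k$ of the square function, maximal-function enlargements $\widetilde\Omega_k$, sorting rectangles by density, then Cauchy--Schwarz against the Carleson sum in one direction, and the extremizer $f_U$ together with the $L^2$-to-$L^1$ Cauchy--Schwarz in the other.

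Two points, however, are imprecise in a way that matters. First, the claim that $\operatorname S f_U$ is ``essentially supported on $U$'' is false as stated for the Meyer wavelets this paper uses, since those are not compactly supported; without a tail estimate (or a switch to a compactly supported basis, plus the independence of the Carleson condition under change of wavelet basis), the inequality $\|\operatorname S f_U\|_1\le|U|^{1/2}\|\operatorname S f_U\|_2$ does not follow. This is the one step where the Cauchy--Schwarz you invoke is not yet justified. Second, your closing remark misattributes the role of $L(\log L)$: what makes $|\widetilde\Omega_k|\lesssim|\Omega_k|$ go through is that the strong maximal operator is applied only to \emph{indicator functions} $\mathbf 1_{\Omega_k}$, and for those the Jessen--Marcinkiewicz--Zygmund bound degenerates to a weak $(1,1)$ estimate at the fixed threshold $1/2$; it has nothing to do with extra integrability of $f$ beyond $\operatorname S f\in L^1$. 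You should also record explicitly that $R\in\mathcal R_k$ forces $|R\cap\Omega_{k+1}|\le\frac12|R|$, which is what actually licenses the bound on the $f$-factor --- as written, ``$\operatorname S f\lesssim 2^k$ on the bulk of $R$'' states the consequence without the stopping condition that produces it. With those repairs the argument is the standard one.
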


\subsection{Journ\'e's Lemma}

The explicit definition of $ \textup{BMO}$ in (\ref{e.BMOdef}) is quite difficult 
to work with. In the first place, it is not an intrinsic definition, in that 
one needs some notion of wavelet to define it.  Secondly, the supremum is 
over a very broad class of objects: All open subsets of $\mathbb R ^{\vec d}$ of finite measure. 
There are simpler definitions
(also unfortunately  not intrinsic) that in particular circumstances are 
sufficient.

Say that a collection of rectangles $ \mathcal U\subset \mathcal D_{\vec d}$
\emph{ has $ t-1$ parameters} if and only if there is a choice of coordinate $s$ so that 
for all $ R,R'\in \mathcal U$ we have $ Q _s=Q _s'$, that is the $s$th coordinate 
of the rectangles are all one fixed $ d_s$ dimensional cube.  

We then define 
\begin{equation}\label{e.BMOd-1} 
\norm f. \textup{BMO} _{-1} (\mathbb R^{\vec d}). 
\eqdef \sup _{\substack{\textup{$ \mathcal U$ has $ t-1$ } \\ \textup{parameters} }} 
\Bigl[\abs{ \operatorname {sh} (\mathcal U)} ^{-1} \sum_{R\in \mathcal U}\sum_{\vec\varepsilon\in\textup {Sig}_{\vec d}} 
\abs{ \ip f,w_R^{\vec\varepsilon},} ^2 \Bigr] ^{1/2} \,.
\end{equation}
A collection of rectangles has a \emph {shadow} given by 
$ \operatorname {sh} (\mathcal U) \eqdef \bigcup\{R\mid R\in \mathcal U\} $. 
 We use the `$ -1$' subscript to indicate that 
we have `lost one parameter' in the definition.\footnote{In the two parameter case, 
our definition of $ \textup{BMO} _{-1}$ is actually a slightly larger space than the 
more familiar rectangular $ \textup{BMO}$ space.}
Motivation for this definition 
comes from our use of induction on parameters in the proof of the lower bound 
for the commutators.  See \S\ref{ss.induct}.

 L.~Carleson 
\cite{carleson-example} produced examples of 
 functions which acted as linear functionals on
 $\operatorname H^1 (\mathbb R ^{\vec d})$ with norm one, yet 
had arbitrarily small $\textup{BMO}_{-1}$ norm. 
This example is recounted at the beginning of R.~Fefferman's article \cite{MR81c:32016}.

Journ\'e's Lemma permits us, with certain restrictions, to dominate the 
$ \textup{BMO}$ norm by the $ \textup{BMO} _{-1}$ norm.  We need a version of this 
statement with an additional refinement, see (\ref{e.d-1B}) that first appeared in 
 \cite {sarahlacey}, and is 
important to our `bootstrapping' argument in \S\ref{s.boot}.

\begin{lemma}[Journ\'e's Lemma]\label{l.journed-1}
Let  $ \mathcal U$ be a collection of rectangles  
whose shadow has finite  measure.   For any $ \eta >0$, we can 
construct $ V\supset \operatorname {sh}(\mathcal U)$ and a function 
$ \operatorname {Emb}\mid \mathcal U\longrightarrow[1,\infty )$ so that 
\begin{gather} \label{e.d-1A}
\operatorname {Emb} (R)\cdot R\subset V,\qquad R\in \mathcal U\,,
\\ \label{e.d-1B}
\abs{ V}<(1+\eta ) \abs{ \operatorname {sh} (\mathcal U)}\,,
\\ \label{e.d-1C}
\NOrm \sum_{R\in \mathcal  U} \sum_{\vec\varepsilon\in\textup {Sig}_{\vec d}}
\operatorname {Emb}(R) ^{-C }
\ip f, w_R^{\vec\varepsilon}, w_R^{\vec\varepsilon} . \textup{BMO} (\mathbb R^{\vec d}). \le
K_\eta  \norm f. \textup{BMO} _{-1}(\mathbb R^{\vec d}). .
\end{gather}
The constant $ K _{\eta }$ 
depending only on $ \eta $ and $ \vec d$, and the constant $ C$,  appearing in the 
last display, upon the  vector  $ \vec d$. 
\end{lemma}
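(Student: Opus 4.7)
The plan is to construct $V$ and $\operatorname{Emb}$ from iterated level sets of the strong maximal function on $\mathbb R^{\vec d}$, then verify (\ref{e.d-1C}) by dyadically decomposing the sum over $R$ according to the value of $\operatorname{Emb}(R)$ and invoking the $\textup{BMO}_{-1}$ hypothesis on $(t{-}1)$-parameter subcollections inside each slab. Concretely, set $\Omega\eqdef\operatorname{sh}(\mathcal U)$, write $\operatorname{M}_s$ for the strong (product) maximal function, and for $\lambda\ge 1$ put $\Omega^\lambda\eqdef\{\operatorname{M}_s\mathbf 1_\Omega>\lambda^{-1}\}$; Jessen--Marcinkiewicz--Zygmund gives $\abs{\Omega^\lambda}\lesssim\lambda(1+\log\lambda)^{t-1}\abs\Omega$. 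Any isotropic dilate $\mu R$ of $R\subset\Omega$ automatically lies in $\Omega^{\mu^d}$, where $d\eqdef d_1+\cdots+d_t$. Given $\eta>0$, fix a summable sequence $\delta_k>0$ with $\sum_k\delta_k<\eta$ and build $V$ inductively: set $V_0=\Omega$, and at stage $k\ge 1$ choose $V_k\subset\Omega^{2^{kd}}$ with $\abs{V_k}\le\delta_k\abs\Omega$ so that $2^kR\subset V_0\cup\cdots\cup V_k$ whenever $2^kR\not\subset V_0\cup\cdots\cup V_{k-1}$. Set $V\eqdef\bigcup_kV_k$ and $\operatorname{Emb}(R)\eqdef 2^{k(R)}$, where $k(R)\eqdef\max\{k:2^kR\subset V\}$; properties (\ref{e.d-1A}) and (\ref{e.d-1B}) follow from the construction.

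For (\ref{e.d-1C}), fix an open $U\subset\mathbb R^{\vec d}$ of finite measure and partition $\{R\in\mathcal U:R\subset U\}$ into slabs $\mathcal U_k\eqdef\{R:\operatorname{Emb}(R)\in[2^k,2^{k+1})\}$. Since $2^kR\subset V$ for $R\in\mathcal U_k$, one checks $\abs{\operatorname{sh}(\mathcal U_k)\cap U}\lesssim 2^{kd}(k+1)^{t-1}\abs{\Omega\cap U}$. I would then decompose each $\mathcal U_k$ into $(t{-}1)$-parameter subfamilies by freezing one coordinate cube at a time (using the dyadic tree structure in that coordinate to avoid multiplicative overcounting of shadows), apply $\norm f.\textup{BMO}_{-1}.$ to each such subfamily, and sum to obtain
\begin{equation*}
\sum_{\substack{R\in\mathcal U\\ R\subset U}}\operatorname{Emb}(R)^{-2C}\sum_{\vec\varepsilon}\abs{\ip f,w_R^{\vec\varepsilon},}^2\lesssim\norm f.\textup{BMO}_{-1}.^2\cdot\abs U\sum_{k\ge 0}2^{-2Ck}\cdot 2^{kd}(k+1)^{t-1}\,.
\end{equation*}
Choosing $C>d/2$ (so that $C$ depends only on $\vec d$) makes the series converge, and dividing by $\abs U$ yields (\ref{e.d-1C}) with $K_\eta$ absorbing the implicit constants.

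The main obstacle is achieving the tight volume bound (\ref{e.d-1B}): a naive choice $V=\bigcup_k\Omega^{2^{kd}}$ greatly exceeds $(1+\eta)\abs\Omega$. The refinement introduced by Ferguson--Lacey \cite{sarahlacey}, extended here to the $t$-parameter setting, keeps $V$ thin by storing in $V_k$ only the dilates of rectangles actually drawn from $\mathcal U$ at that stage, and the weight $\operatorname{Emb}(R)^{-C}$ supplies exactly the budget needed for $\sum_k\delta_k$ to stay below $\eta$. Justifying that each stage admits such a small $V_k$ while the resulting $\operatorname{Emb}$ remains large enough for the $(t{-}1)$-parameter argument above to close is the delicate balance at the heart of the lemma; it is precisely this refinement that enables the bootstrapping in \S\ref{s.boot}.
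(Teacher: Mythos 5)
The paper itself does not prove this lemma: it states it and cites Lacey--Terwilleger \cite{math.CA/0310348} for the general case, Ferguson--Lacey \cite{sarahlacey} for the two-parameter case, and Pipher \cite{MR88a:42019} for the geometric ingredients (the latter giving only $\abs V\lesssim\abs{\operatorname{sh}(\mathcal U)}$, not the $(1+\eta)$ refinement). So there is no internal proof to compare against. Your outline has the correct overall architecture — iterated level sets of the strong maximal function, an embeddedness function $\operatorname{Emb}$, and a dyadic decomposition of the rectangles by $\operatorname{Emb}(R)$ — and you correctly locate the new $(1+\eta)$ volume control as the Ferguson--Lacey refinement.

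However, the two steps you hand-wave are exactly where the content of the lemma lives, so the proposal as written has genuine gaps. First, the existence of sets $V_k\subset\Omega^{2^{kd}}$ with $\abs{V_k}\le\delta_k\abs\Omega$ that still capture the dilates $2^kR$ is asserted, not constructed; since $\abs{\Omega^{2^{kd}}}$ grows like $2^{kd}(kd)^{t-1}\abs\Omega$, nearly every rectangle must be \emph{excluded} at stage $k$, and the whole point of the lemma is that the rectangles excluded at stage $k$ (those with small embeddedness) carry little $\textup{BMO}$ mass; you acknowledge this but do not resolve it. Second, and more seriously, the inequality you invoke to justify the displayed chain, $\abs{\operatorname{sh}(\mathcal U_k)\cap U}\lesssim 2^{kd}(k+1)^{t-1}\abs{\Omega\cap U}$, is trivially true (the left side is at most $\abs{\Omega\cap U}$ since $\operatorname{sh}(\mathcal U_k)\subset\Omega$) and therefore carries no information. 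What you actually need, to run the $\textup{BMO}_{-1}$ argument, is a bound on the \emph{sum of the shadows of the $(t-1)$-parameter subfamilies} you extract from $\mathcal U_k$ — a quantity that can vastly exceed $\abs{\operatorname{sh}(\mathcal U_k)}$ because those shadows overlap. Controlling that overlap, using the geometry forced by $\operatorname{Emb}(R)\approx 2^k$ (e.g.\ maximality of an enlarged rectangle $R^*\supset R$ in the spirit of Pipher's covering lemma), is precisely the combinatorial core of Journé's Lemma; the proposal defers it to ``one checks'' and ``the delicate balance at the heart of the lemma,'' which means the proof is not actually carried out.
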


Notice that the power on the embeddedness term in (\ref{e.d-1C})
is allowed to be quite big, a function of the parameters $ \vec d$ that 
we do not specify.
Also, concerning the conclusions, if we were to take $\operatorname {Emb}(R)\equiv1$, then 
certainly the first conclusion (\ref{e.d-1A}) would be true.  But, the last 
conclusion would be false for the Carleson examples in particular.  This choice is 
obviously not permitted in general. 

The formulations of Journ\'e's Lemma given here are not the typical ones 
found in Journ\'e's original Lemma, or  J.~Pipher's extension to the three dimensional 
case \cite {MR88a:42019}.  These papers give the more geometric formulation of these Lemmas, 
and J.~Pipher's article implicitly contains the geometric formulation 
needed to prove the Lemma above (provided one is satisfied with 
the estimate $ \abs{ V} \lesssim \abs{ \operatorname {sh} (\mathcal U)}$).  See 
Pipher \cite{MR88a:42019}. 
Lemma~\ref{l.journed-1}, as formulated above, was found in Lacey and Terwilleger \cite{math.CA/0310348}; 
the two dimensional variant (which is much easier) appeared in Lacey and Ferguson 
\cite{sarahlacey}.  The paper of Cabrelli, Lacey, Molter and Pipher 
\cite{math.CA/0412174}  surveys some  issues related to Journ\'e's Lemma. 
See in particular Sections 2 and 4.  We refer the reader to these references 
for more information on this subject.

\section{Paraproducts} \label{s.para}

The paraproducts that arise are of a somewhat general nature, and so we 
make some definitions which will permit a reasonably general 
definition of a paraproduct.

Let $ \chi (x) = (1+ \lvert  x\rvert  ^2 ) ^{-1}  $.
Let $ \chi _Q ^{(2)} =  
\operatorname {Dil} ^{(2)} _{Q} \chi $.   Say that $ \varphi $ is \emph{adapted to $ Q$} 
iff 
\begin{equation} \label{e.adapted}
\abs{ \operatorname D ^{m}\varphi (x)} \lesssim   \lvert  Q\rvert ^{-m}[ \chi ^{(2)} _{Q} (x)] ^{N}
\,, \qquad x\in \mathbb R ^{d}\,. 
\end{equation} 
This inequality should hold for all derivatives $ \operatorname D ^{m}$, 
where $ m\le d+1$, where $ d$ is the ambient dimension. 
The inequality should hold for  
all integers $ N$. The implied constant can depend upon these parameters. 
Say that $ \varphi $ has a zero iff $ \int \varphi \; dx=0$. 

We extend these definitions to functions $ \varphi $ on $ \mathbb R ^{\vec d}$. 
Say that $ \varphi $ is  \emph{adapted to $ R=\prod Q_s$} if and only if 
\begin{equation} \label{e.Tensoradapted}
\varphi (x_1, \dotsc ,x_t) = \prod _{s=1} ^{t} \varphi _s (x_s)
\,, \qquad 
\textup{where $ \varphi _s$ is adapted to $ Q_s$.}
\end{equation}
Say that $ \varphi $ \emph{has zeros in the $ s$th coordinate } if and only if 
\begin{equation} \label{e.zer0}
\int _{\mathbb R ^{d_s}} \varphi (x_1,\ldots, x_s, \dotsc x_t) \; d x_s =0
\,, \qquad  \textup{for all $ x_1, \dotsc x _{s-1},x _{s+1} \dotsc x_t$.}
\end{equation}

The main Theorem on paraproducts that we will need concerns bilinear operators 
formed in this way.  For $ j=1,2,3$ let $ \{\varphi _{j,R} \mid R\in \mathcal D_{\vec d}\} $
be three families of functions adapted to the dyadic rectangles in $\mathcal D _{\vec d}$. 
Then define 
\begin{equation*}
\operatorname B (f_1,f_2) \eqdef \sum _{R\in \mathcal D _{\vec d}} 
\frac {\ip f_1, \varphi _{1,R}, } {\abs{ R } ^{1/2} }  
\ip f_2 , \varphi _{2,R},\, \varphi _{3,R}.
\end{equation*}
The following result is due to  Journ\'e \cites{MR88d:42028,MR949001}.  Also see 
\cites{camil1,camil2,math.CA/0502334}.

\begin{theorem}\label{t.MainParaproducts}  Assume that the 
family $ \{\varphi _{1,R}\}$ has zeros in all coordinates.  For every other 
coordinate $ s$, assume that there is a choice of $ j=2,3$ for which the 
the family  $ \{\varphi _{j,R}\}$ has zeros in the $ s$th coordinate.  Then 
 the operator $ \operatorname B$ enjoys the property  
\begin{equation*}
\operatorname B\mid\textup{BMO}\times L ^{p}
\longrightarrow L^p\,, \qquad 1<p<\infty \,. 
\end{equation*}
\end{theorem}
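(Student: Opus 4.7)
The plan is to argue by duality. Pairing $B(f_1,f_2)$ against a test function $g\in L^{p'}(\mathbb R^{\vec d})$ and moving $f_1$ to the other side of the pairing gives
\begin{equation*}
\bigl\langle B(f_1,f_2),g\bigr\rangle = \bigl\langle f_1, T(f_2,g)\bigr\rangle,\qquad
T(f_2,g) \eqdef \sum_{R\in \mathcal D _{\vec d}} \frac{\ip f_2,\varphi_{2,R},\,\ip g,\varphi_{3,R},}{\abs R^{1/2}}\,\varphi_{1,R}.
\end{equation*}
By the Chang--Fefferman duality (Theorem~\ref{t.changfefferman}), it suffices to prove that $\Norm T(f_2,g).H^1(\mathbb R^{\vec d}). \lesssim \norm f_2.p. \norm g.p'.$. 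Since the family $\{\varphi_{1,R}\}$ has zeros in every coordinate, it functions as a system of product-wavelet molecules, and the square-function characterization of product $H^1$ (Theorem~\ref{t.H^1equiv}) bounds
\begin{equation*}
\norm T(f_2,g).H^1.
\lesssim
\int_{\mathbb R ^{\vec d}} \Bigl[\sum_{R\in \mathcal D _{\vec d}} \frac{\abs{\ip f_2,\varphi_{2,R},}^2\,\abs{\ip g,\varphi_{3,R},}^2}{\abs R^2}\,\mathbf 1_R(x)\Bigr]^{1/2}\,dx.
\end{equation*}

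The heart of the matter is a pointwise estimate on this product square function. By hypothesis the coordinates partition as $\{1,\dotsc,t\} = A\sqcup B$ where $\{\varphi_{2,R}\}$ is mean-zero in each $s\in A$ and $\{\varphi_{3,R}\}$ in each $s\in B$. Exploiting the tensor factorization $\varphi_{j,R}=\prod _{s} \varphi_{j,R,s}$, I would iterate, coordinate by coordinate, the elementary pointwise inequality
\begin{equation*}
\sum_{Q} \alpha_Q^2\,\beta_Q^2 \mathbf 1_Q(y) \le \Bigl(\sup_{Q\ni y}\alpha_Q^2\Bigr)\sum_{Q}\beta_Q^2\,\mathbf 1_Q(y),
\end{equation*}
arranging the roles at each stage so that the $\sum$-side carries cancellation (becoming a one-parameter Littlewood--Paley square function) and the $\sup$-side does not (becoming a dyadic maximal function, which is controlled by the strong maximal function applied to a partial pairing of $f_2$ or $g$ against the remaining factors). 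Running this through all $t$ coordinates yields a pointwise splitting $[\sum_R \cdots]^{1/2}(x) \le \mathcal F(f_2)(x)\cdot \mathcal G(g)(x)$, where $\mathcal F$ is a composition of Littlewood--Paley square functions in the coordinates of $A$ and strong maximal functions in the coordinates of $B$, and $\mathcal G$ is the symmetric hybrid. Hölder's inequality then bounds the integral by $\norm {\mathcal F(f_2)}.p. \norm {\mathcal G(g)}.p'.$, and each factor is controlled on its $L^p$ space by iterating the Fefferman--Stein vector-valued maximal inequality in the non-cancellation coordinates with one-parameter Littlewood--Paley theory in the cancellation coordinates---an iteration which is legitimate precisely because the adapted functions factor as tensor products.

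The main technical obstacle is executing this iterative Cauchy--Schwarz cleanly: after each one-variable reduction one must reinterpret the surviving $\sup$ or $\sum$ as acting on a partial pairing of $f_2$ or $g$ against the $\varphi_{j,R,s}$ in the already-processed directions, and then dominate these non-cancellation pairings by strong maximal averages uniformly in the remaining scale parameters. The adapted-bump decay specified in (\ref{e.adapted})--(\ref{e.Tensoradapted}) supplies enough pointwise control to absorb these uniform constants, but the bookkeeping of the cascade of partial integrals is what gives the argument its genuinely multiparameter flavour.
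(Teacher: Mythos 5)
The paper offers no proof of Theorem~\ref{t.MainParaproducts}; it simply attributes the result to Journ\'e and points to Muscalu--Pipher--Tao--Thiele and Lacey--Metcalfe for alternative treatments, so there is no internal argument to compare yours against. What you sketch is a recognizable modern route to the estimate: dualize, aim for $\lVert T(f_2,g)\rVert_{H^1}\lesssim\lVert f_2\rVert_p\lVert g\rVert_{p'}$, replace the product square function of $T(f_2,g)$ by the discrete tent--space expression in the coefficients $c_R=\ip f_2,\varphi_{2,R},\ip g,\varphi_{3,R},/\abs R^{1/2}$, split that expression pointwise by iterated Cauchy--Schwarz into a product of two hybrid square/maximal operators, and close with iterated Fefferman--Stein and Littlewood--Paley. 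The architecture is sound, and the pointwise inequality you need, $\sum_{R_A,R_B}a_Rb_R\le\bigl(\sum_{R_A}\sup_{R_B}a_R\bigr)\bigl(\sum_{R_B}\sup_{R_A}b_R\bigr)$, is correct and does organize the coordinates in the way your $A/B$ partition demands.

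Two steps, though, are carrying real weight in the product setting and cannot be waved through. (i) Passing from $\lVert T(f_2,g)\rVert_{H^1}$ to $\int\bigl(\sum_R\abs{c_R}^2\abs R^{-1}\mathbf 1_R\bigr)^{1/2}\,dx$ is the product--molecule estimate. You need an almost-orthogonality computation showing $\operatorname S\bigl(T(f_2,g)\bigr)(x)\lesssim\bigl(\sum_R\abs{c_R}^2\abs R^{-1}[\chi_R^{(2)}(x)]^{N}\bigr)^{1/2}$ --- this is precisely where the zeros of $\varphi_{1,R}$ in \emph{all} $t$ coordinates enter --- and then a multiparameter change-of-aperture argument to trade the $[\chi_R^{(2)}]^N$ tails for $\mathbf 1_R$ inside an $L^1$ norm. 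Neither piece is automatic in the product setting; the latter in particular is exactly the sort of claim that the Carleson counterexample cautions one to prove rather than assume. (ii) The tensor condition (\ref{e.Tensoradapted}) only forces each factor $\varphi_{j,R,s}$ to be adapted to $Q_s$; it is still permitted to depend on the \emph{whole} rectangle $R$. So after you freeze the cancellation coordinates and take a supremum over the rest, the surviving partial pairing of $f_2$ (or $g$) is not yet a fixed function of the remaining variables, and you cannot directly invoke a strong maximal operator. You must first dominate the family of adapted bumps in the frozen coordinates by a single majorant, which the arbitrary-$N$ decay of (\ref{e.adapted}) does permit, but this uniformization needs to be said and used consistently through the cascade. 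With those two points supplied, your outline becomes a proof.
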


We will refer to the function $ \varphi_1$ as the \emph{symbol} of the paraproduct. 
This function plays the same role for paraproducts as does the symbol of the 
commutator. 
Particularly relevant for us is the following reformulation of this theorem:
If $ \operatorname B_1$ and $ \operatorname B_2$ are bounded paraproducts, then 
the tensor product $ \operatorname B_1 \otimes \operatorname B_2$ is a bounded 
paraproduct for symbols  on the corresponding product $ \textup{BMO} $ space. 

In many applications of this result, the functions $ \varphi _{1,R}$, acting 
on the symbol of the paraproduct, will be product wavelets.  

\bigskip 

A more particular form of the upper bound on commutators plays a role in 
both the upper and lower bounds for our Main Theorem.  We state this variant 
of Theorem~\ref{t.MainParaproducts} for our use below.  In particular, the 
estimate (\ref{e.localized}) is used in the lower bound. 
It  holds when the symbol and the function the paraproduct is applied 
to have `separated wavelet support' in the sense of (\ref{e.Localized}).

For a subset of coordinates  $ J\subset \{1,\dotsc,t\}$ set 
\begin{align} \label{e.Uu}
\operatorname F_{\vec l,J} &\eqdef\sum_{\vec\varepsilon\in\textup{Sig}_{\vec d}}\, \sum_{\substack{\vec k\in \mathbb Z ^{t}\\
k_s=l_s\,, \ s\in J \\ k_s\ge l_s\,, s\not\in J}}
\Delta \operatorname F_{\vec k} \,,
\\  
\label{e.DeltaUu}
\Delta \operatorname F_{\vec k} 
& \eqdef 
\sum_{\vec\varepsilon\in\textup{Sig}_{\vec d}} 
\sum _{\substack{R\in \mathcal D _{\vec d}\\ \abs{ Q_s} = 2 ^{k_s} }} 
w^{\vec\varepsilon}_R \otimes w^{\vec\varepsilon}_R \,. 
\end{align}
For those coordinates $ s\in J$, we take the wavelet projection onto 
that scale, while for those coordinates $ s\not\in J$, we sum larger scales. 
That means that we lose the zero in the coordinates not in $ J$. 

Write $ R' \lesssim _{J} R $ if and only if  $ \abs{ Q' _{s}}\le \abs{ Q _{s}}$ for 
$ s\not\in J$ and $ \abs{ Q'_s}=\abs{ Q_s}$ for $ s\in J$.

\begin{theorem}\label{t.Ud} Let $ \operatorname T$ be a product 
Calder\'on--Zygmund kernel as in Theorem~\ref{t.T-Journe}. For all $ J\subset \{1,\dotsc,t\}$, 
and $ \vec k\in \mathbb Z ^{t}$ with 
\begin{equation}\label{e.vecK}
3\le k_s \le 8\,,\,  s\not\in J\,, \qquad   -8\le k_s \le 8\,,\, s\in J\,.  
\end{equation}
We have 
\begin{equation} \label{e.Ud}
\Norm \sum _{\vec l\in \mathbb Z ^{t}} (\Delta 
\operatorname F_{\vec l}\, b)\cdot  \operatorname T\operatorname F_{\vec l+\vec k,J} \varphi  .2. 
\lesssim \norm b.\textup{BMO}(\mathbb R ^{\vec d}). \norm \varphi .2. .
\end{equation}
Moreover, suppose we have the following separation condition: 
Fix an integer $ A>0$. 
Suppose that 
\begin{equation}\label{e.Localized}
\textup{if for $\vec\varepsilon$ and $\vec{\varepsilon'}$, $\ip b,w_ {R'}^{\vec{\varepsilon'}},\neq0$, $ \ip \varphi , w _{R}^{\vec\varepsilon},\neq0$  with 
$ R' \lesssim _{J} R$, then $ AR \cap R'=\emptyset $. }
\end{equation}
We then have the estimate 
\begin{equation} \label{e.localized}
\NOrm \sum _{\vec l\in \mathbb Z ^{t}} (\Delta  \operatorname 
F_{\vec l} b)\cdot \operatorname T
\operatorname F_{\vec l+\vec k,J} \varphi  .2. 
\lesssim A ^{-100t}\norm b.\textup{BMO}(\mathbb R ^{\vec d}). \norm \varphi .2. .
\end{equation}
Implied constants are independent of the choice of $ \vec k$. 
\end{theorem}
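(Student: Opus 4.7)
The plan is to realize the bilinear operator
\[
(b,\varphi)\longmapsto \sum_{\vec l}(\Delta\operatorname F_{\vec l}b)\cdot\operatorname T\operatorname F_{\vec l+\vec k,J}\varphi
\]
as a product paraproduct in the sense of Theorem~\ref{t.MainParaproducts}, with $b$ in the symbol slot, and then to extract the $A^{-100t}$ gain of (\ref{e.localized}) from the off-diagonal decay of the product Calder\'on--Zygmund kernel $\operatorname T$ under the separation hypothesis (\ref{e.Localized}). The restrictions $3\le k_s\le 8$ for $s\notin J$ and $-8\le k_s\le 8$ for $s\in J$ are exactly what is needed to lock the scales of the $b$-wavelet and the $\varphi$-wavelet into a rigid relative configuration, putting the operator into paraproduct normal form.

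Step~1 is to rewrite $\operatorname F_{\vec l+\vec k,J}\varphi$ in tensor wavelet form. In the coordinates $s\in J$ the defining sum runs over the single scale $2^{l_s+k_s}$, giving a bandpass wavelet projection with mean zero in that direction. In the coordinates $s\notin J$ the sum over scales $\ge 2^{l_s+k_s}$ collapses, via the father-wavelet telescoping identity (\ref{e.FATHER}), to a single father-wavelet projection at scale $2^{l_s+k_s}$. Hence $\operatorname F_{\vec l+\vec k,J}\varphi$ is, up to a finite sum over wavelet signatures, a Littlewood--Paley-type piece of $\varphi$ adapted to rectangles of scale $\vec l+\vec k$ and with mean zero precisely in the coordinates $s\in J$.

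Step~2 is to pair the operator against a test function $\psi$ and expand $b,\varphi,\psi$ simultaneously in tensor wavelets. Because $\operatorname T$ is a product Calder\'on--Zygmund operator and the wavelets have the Fourier support property (\ref{e.Fourier}), the trilinear matrix coefficients $\int w_R^{\vec\varepsilon}\cdot\operatorname T w_{R'}^{\vec{\varepsilon'}}\cdot w_{R''}^{\vec{\varepsilon''}}\,dx$ form an almost-diagonal array with rapid off-diagonal decay in both position and scale, the dominant contribution coming from the diagonal case $R''\sim R$. Summing out the off-diagonal contributions (which converge absolutely with constant $\lesssim 1$) and collecting the diagonal terms produces a trilinear form
\[
\sum_R\frac{\langle b,\varphi_{1,R}\rangle}{|R|^{1/2}}\langle\varphi,\varphi_{2,R}\rangle\langle\psi,\varphi_{3,R}\rangle,
\]
with $\varphi_{1,R}=w_R^{\vec\varepsilon}$ having zeros in all coordinates, $\varphi_{2,R}$ adapted to a rectangle of scale $\vec l+\vec k$ near $R$ with zeros precisely in the coordinates $s\in J$ (Step~1), and $\varphi_{3,R}=w_R^{\vec{\varepsilon''}}$ having zeros in all coordinates, in particular those $s\notin J$. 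For every coordinate $s$ at least one of $\varphi_{2,R},\varphi_{3,R}$ therefore has a zero there, so Theorem~\ref{t.MainParaproducts} applies at $p=2$ and yields (\ref{e.Ud}).

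For (\ref{e.localized}), the separation hypothesis (\ref{e.Localized}) forces every pair $(R,R')$ contributing to the paraproduct expansion of Step~2 to satisfy $AR\cap R'=\emptyset$. The product Calder\'on--Zygmund kernel bounds, combined with the mean-zero property of the wavelet $w_{R'}^{\vec{\varepsilon'}}$, yield the decay $\|\operatorname T w_{R'}^{\vec{\varepsilon'}}\|_{L^\infty(R)}\lesssim A^{-N}|R'|^{-1/2}$ for any $N$, by Taylor-expanding the kernel against the mean-zero wavelet in the separated coordinate. Choosing $N$ larger than $100t$ plus a dimensional constant and reassembling the paraproduct estimate with this extra decay produces the factor $A^{-100t}$. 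The main obstacle is the decoupling in Step~2: separating the action of the product Calder\'on--Zygmund operator $\operatorname T$ coordinate-by-coordinate so that the almost-diagonal estimate on trilinear matrix coefficients holds with the precise zero pattern the paraproduct theorem requires; and, in the separated case, tracking the off-diagonal summation carefully enough to extract the full $A^{-100t}$ gain without losing the $\textup{BMO}\times L^2\to L^2$ control.
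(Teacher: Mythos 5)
Your high-level plan is right, and Step~1 (telescoping $\operatorname F_{\vec l+\vec k,J}$ into father wavelets in the coordinates $s\notin J$ via (\ref{e.FATHER}), and single-scale wavelets in $s\in J$) is exactly what the paper does. But the route you take from there is different from the paper's, and the key step in your version is a gap, not a detail.

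The paper does not dualize against a third function $\psi$ and analyze trilinear matrix coefficients. It constructs the bilinear paraproduct directly: for each pair $(R,R')$ at the imposed relative scales, the product $\zeta_{R,R',J}=A^{-N}\sqrt{\abs R}\,w_R\cdot W_{R',J}$, with $A\simeq\operatorname M\mathbf 1_R(c(R'))$, is shown to be \emph{adapted to $R$}, and, crucially, to \emph{have zeros in the coordinates $s\notin J$} — this is where the Meyer Fourier-support property (\ref{e.Fourier}) and the hypothesis $k_s\ge 3$ for $s\notin J$ actually enter. The full sum is then organized as $\sum_v\operatorname B_{\pi_v}(b,\varphi)$ over location-shift maps $\pi_v$, each $\operatorname B_{\pi_v}$ being a paraproduct composed with a location shift whose $\textup{Para}$ norm is controlled by $\mu(\pi_v)^N$ (see (\ref{e.mu})), and one sums using the counting bound $\#\{v:\mu(\pi_v)\simeq A\}\lesssim A^{-C}$. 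The separated estimate (\ref{e.localized}) then falls out because under (\ref{e.Localized}) every $\pi_v$ has $\mu(\pi_v)\lesssim A^{-1}$.

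The genuine gap in your proposal is the sentence ``Summing out the off-diagonal contributions (which converge absolutely with constant $\lesssim 1$) and collecting the diagonal terms produces a trilinear form \dots.'' The off-diagonal contributions are not a bounded error that disappears into a constant; they are a countable family of bilinear operators depending on $b$ and $\varphi$, and each one must itself be realized as a paraproduct composed with signature/scale/location shifts and summed using the $\textup{Para}$-norm bookkeeping of (\ref{e.paraNorm})--(\ref{e.genearlizedParaproduct}). Without that organizing device there is no way to pair the three indices $(R,R',R'')$ into the single rectangle index that Theorem~\ref{t.MainParaproducts} requires, nor to make the sum converge, nor to extract the $A^{-100t}$ gain in the separated case (which, in the paper, comes precisely because the separation hypothesis forces \emph{all} of the $\pi_v$ to have small $\mu$, not because of Taylor-expanding a Calder\'on--Zygmund kernel against a single mean-zero wavelet). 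You also attribute the needed zero in the coordinates $s\notin J$ to $\varphi_{3,R}$ being a wavelet, and describe $3\le k_s\le 8$ only as ``locking scales''; this misses that the paper's $\zeta_{R,R',J}$ \emph{acquires} its zeros in $s\notin J$ from (\ref{e.Fourier}) precisely under the hypothesis $k_s\ge 3$, and that this is the actual functional role of (\ref{e.vecK}). Finally, note the paper simplifies by taking $\operatorname T=\operatorname{Id}$; keeping $\operatorname T$ general is fine but does not substitute for the missing shift-operator bookkeeping.
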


The operator in (\ref{e.Ud}), though it fits into the category of 
paraproducts, it does not fit the precise definition we have given of 
a paraproduct above, and so we will postpone the proof of this Theorem 
until the end of this section.

\subsection*{Shift Operators}

There are different  types of `shifts' on wavelets that also enter into our 
considerations. These are shifts of signature, scale and location.  We discuss 
each of these.

Define a `signature shift' operator by a map $ \epsilon \mid  \textup{Sig} _{\vec d} 
\times \mathcal D _{\vec d} \longrightarrow \textup{Sig} _{\vec d}$, 
which $ \epsilon ( \cdot , R)$ is one to one for each rectangle $ R$. Then 
the operator is defined first on wavelets by 
\begin{equation*}
\operatorname \Sh _{ \textup{Sig}, \epsilon} (w ^{\vec\varepsilon } _{R})= w ^{\epsilon (\vec\varepsilon, R)
} _{R}
\end{equation*}
and then extended linearly.   The boundedness properties of these operators are 
straightforward. 

\begin{proposition}\label{p.sigShift} We have the estimate 
\begin{equation*} 
\norm \operatorname \Sh _{ \textup{Sig}, \epsilon} .p\to p. \lesssim C_p\,, 
\qquad  1<p<\infty \,. 
\end{equation*}
\end{proposition}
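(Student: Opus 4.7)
The plan is to reduce the boundedness of $\operatorname{\Sh}_{\textup{Sig},\epsilon}$ to the $ L ^{p}$-equivalence between a function and its wavelet square function, observing that the signature shift merely permutes the wavelet coefficients within each rectangle and therefore leaves the square function pointwise invariant.

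First I would invoke the product Littlewood--Paley inequality:  for the (Meyer) wavelet basis $\operatorname{Wavelet}_{\mathcal D_{\vec d}}$ and for $ 1<p<\infty $,
\begin{equation*}
\norm f.p. \simeq \NORm \Bigl(\sum _{R\in \mathcal D _{\vec d}}
\sum _{\vec \varepsilon \in \textup{Sig}_{\vec d}}
\abs{ \ip f,w _{R} ^{\vec \varepsilon },} ^{2}
\frac{\mathbf 1 _{R}}{\abs{ R}}\Bigr) ^{1/2}.p. .
\end{equation*}
In the multiparameter setting this follows by iterating the usual one-parameter vector-valued Littlewood--Paley inequality in each of the $ t$ coordinates, using the tensor-product structure of $ w _{R} ^{\vec \varepsilon }$.

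Next, I would compute the wavelet coefficients of $ g = \operatorname{\Sh}_{\textup{Sig},\epsilon } f$. Writing $ f = \sum _{R,\vec \varepsilon }\ip f,w _{R} ^{\vec \varepsilon },w _{R} ^{\vec \varepsilon }$ and applying the operator termwise gives $ g = \sum _{R,\vec \varepsilon }\ip f,w _{R} ^{\vec \varepsilon },w _{R} ^{\epsilon (\vec \varepsilon ,R)}$. Re-indexing by $ \vec \eta = \epsilon (\vec \varepsilon ,R)$, which is legitimate since $ \epsilon (\cdot ,R)$ is a bijection of $ \textup{Sig} _{\vec d}$, one sees that $ \ip g, w _{R} ^{\vec \eta },= \ip f,w _{R} ^{\epsilon ^{-1} (\vec \eta ,R)},$.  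Consequently, for every rectangle $ R$ the multiset $ \{\abs{\ip g, w_{R} ^{\vec \eta },} ^{2}\mid \vec \eta \in \textup{Sig} _{\vec d}\}$ coincides with the multiset $ \{\abs{\ip f, w _{R} ^{\vec \varepsilon },} ^{2}\mid \vec \varepsilon \in \textup{Sig} _{\vec d}\}$, and the square function is pointwise unchanged:
\begin{equation*}
\sum _{R,\vec \eta } \abs{ \ip g, w _{R} ^{\vec \eta },} ^{2} \frac{\mathbf 1 _{R}}{\abs{ R}}
= \sum _{R,\vec \varepsilon } \abs{ \ip f, w _{R} ^{\vec \varepsilon },} ^{2} \frac{\mathbf 1 _{R}}{\abs{ R}} \,.
\end{equation*}

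Combining the two steps, $ \norm \operatorname{\Sh}_{\textup{Sig},\epsilon } f.p. \simeq \norm f.p. $ with constants depending only on $ p$ and $ \vec d$.  The only non-routine input is the multiparameter wavelet square-function equivalence, which I expect to be the main (although standard) ingredient; everything else is an elementary bookkeeping argument exploiting that $ \epsilon (\cdot ,R)$ is a bijection for each fixed $ R$. No cancellation between different rectangles is required, so the finiteness of $ \textup{Sig} _{\vec d}$ is enough.
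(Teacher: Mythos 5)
Your argument is correct and is precisely the content the paper intends when it says the result ``follows immediately from the Littlewood--Paley inequalities'': the signature shift permutes wavelet coefficients within each rectangle, leaving the square function pointwise unchanged, and $L^p$-boundedness follows. You have simply filled in the details the paper omits; no genuine difference in approach.
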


The proof follows immediately from the Littlewood--Paley inequalities.  We omit the details. 

\smallskip 

Define a `scale shift' operator by a one to one map $ \sigma _{\rho }
\mid \mathcal D _{\vec d} \longrightarrow \mathcal D _{\vec d} $ that sends each dyadic rectangle $ R $ into 
a unique $ \sigma (R)\subset R $, so that the ratio 
$ \rho   = \abs{\sigma (R)}/ \abs{ R}  $ is independent of $ R$.  
The parameter of this operator is $ \rho $. 
Define a corresponding linear operator $ \operatorname \Sh _{\textup{scale}, \rho } $ by 
\begin{equation*}
\operatorname \Sh _{\textup{scale},\rho } (w ^{\vec\varepsilon } _{R}) \eqdef  
\sqrt {\rho } \cdot w ^{\vec\varepsilon } _{ \sigma (R)} 
\end{equation*}
and the operator is then uniquely defined by linearity.   Our observation is that 
this shift is a uniformly bounded operator on  product $ \textup{BMO} $.

\begin{theorem}\label{t.scaleShift}  The operators $ \operatorname \Sh _{\textup{scale},\rho}$ 
map $ \textup{BMO} (\mathbb R ^{\vec d})$ to itself.  Moreover for all $ \kappa >0$
we have the estimate 
\begin{equation*}
\norm \operatorname \Sh _{\textup{scale}, \rho }. \textup{BMO} \to 
\textup{BMO} . \lesssim  \rho  ^{-\kappa  }\,.
\end{equation*}
\end{theorem}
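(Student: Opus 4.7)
The plan is to argue directly from the wavelet characterization (\ref{e.BMOdef}) of product BMO, using only the injectivity of $\sigma$ and its constant volume ratio $\rho$.  Since $\langle \operatorname \Sh _{\textup{scale},\rho} b, w_{R'}^{\vec\varepsilon}\rangle = \sqrt{\rho}\,\langle b, w_{R}^{\vec\varepsilon}\rangle$ when $R'=\sigma(R)$ and vanishes otherwise, for each open set $U\subset \mathbb R^{\vec d}$ of finite measure,
$$
\sum_{R'\subset U}\sum_{\vec\varepsilon}\Abs{\langle\operatorname \Sh _{\textup{scale},\rho}b, w_{R'}^{\vec\varepsilon}\rangle}^2
\;=\; \rho \sum_{R\in \mathcal U^{*}}\sum_{\vec\varepsilon}\Abs{\langle b, w_R^{\vec\varepsilon}\rangle}^2,
\qquad
\mathcal U^{*}\eqdef \{R\in\mathcal D_{\vec d}\mid \sigma(R)\subset U\}.
$$
The entire task is to dominate the right-hand side by $|U|\,\norm b.\textup{BMO}.^2$ with at most polylogarithmic slack in $\rho^{-1}$.

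The key geometric observation is that $V\eqdef \operatorname{sh}(\mathcal U^{*})$ cannot be much larger than $U$: for each $R\in \mathcal U^{*}$, the intersection $R\cap U$ contains $\sigma(R)$ and therefore has measure at least $\rho|R|$, so the $t$-parameter strong dyadic maximal function $\operatorname M_s$ satisfies $\operatorname M_s \mathbf 1_U(x)\ge \rho$ for every $x\in R$.  Hence
$$
V\;\subset\; \bigl\{x\in \mathbb R^{\vec d}\mid \operatorname M_s \mathbf 1_U(x) \ge \rho\bigr\},
$$
and the classical $L(\log L)^{t-1}$ weak-type estimate of Jessen--Marcinkiewicz--Zygmund for $\operatorname M_s$, applied to $\mathbf 1_U$, yields $|V|\lesssim \rho^{-1}(1+\log \rho^{-1})^{t-1}|U|$.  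Inserting $V$ into the wavelet definition (\ref{e.BMOdef}) of $\textup{BMO}$ as an admissible open set gives
$$
\rho \sum_{R\in \mathcal U^{*}}\sum_{\vec\varepsilon}\Abs{\langle b, w_R^{\vec\varepsilon}\rangle}^2
\;\le\; \rho \sum_{R\subset V}\sum_{\vec\varepsilon}\Abs{\langle b, w_R^{\vec\varepsilon}\rangle}^2
\;\le\; \rho |V| \norm b.\textup{BMO}.^2
\;\lesssim\; \bigl(1+\log \rho^{-1}\bigr)^{t-1}|U|\,\norm b.\textup{BMO}.^2.
$$

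Dividing by $|U|$, taking square roots, and taking the supremum over $U$ yields $\norm \operatorname \Sh _{\textup{scale},\rho}b.\textup{BMO}. \lesssim (1+\log \rho^{-1})^{(t-1)/2}\norm b.\textup{BMO}.$, which, since $\rho\le 1$, is bounded by $C_{\kappa}\rho^{-\kappa}$ for any $\kappa>0$.  I expect the only non-routine step to be spotting the geometric containment $|R\cap U|\ge \rho|R|$ that triggers the strong maximal bound; once that is in hand, the classical $L\log L$ weak-type estimate does all the work, and no Journ\'e-type covering argument is required because arbitrary polylogarithmic loss in $\rho^{-1}$ is harmless against the target exponent $\rho^{-\kappa}$.
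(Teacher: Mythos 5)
Your proof is correct and is essentially the same argument as in the paper: both identify the superlevel set $V=\{\operatorname M \mathbf 1_U \gtrsim \rho\}$ of the strong maximal function, observe that $\sigma(R)\subset U$ forces $R\subset V$, test the $\textup{BMO}$ definition against $V$, and win because $\rho\,|V|$ is controlled by $|U|$. The only difference is which maximal estimate you invoke to bound $|V|$: the paper's proof uses the $L^p$ bound for the strong maximal function with $p=1+\kappa$ to obtain $|V|\lesssim\rho^{-p}|U|$, whereas you use the Jessen--Marcinkiewicz--Zygmund $L(\log L)^{t-1}$ weak-type inequality, which yields the slightly sharper $|V|\lesssim \rho^{-1}(1+\log\rho^{-1})^{t-1}|U|$ and hence a polylogarithmic bound. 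This variant is in fact exactly the one recorded in the paper's Remark~\ref{r.log}, where the logarithmic form of the estimate is derived by optimizing the $L^p$ constant $\|\operatorname M\|_{p\to p}\lesssim (p-1)^{-t}$ at $p-1\simeq|\log\rho|^{-1}$. So nothing is gained or lost structurally; your route is just the weak-type formulation of the same ingredient.
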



\begin{proof}
Given $ f\in \textup{BMO}$, and open set $ U\subset \mathbb R ^{\vec d}$, 
consider the set 
\begin{equation*}
V \eqdef \{ \operatorname M \mathbf 1 _{U} > c \rho  \}
\end{equation*}
where $ \operatorname M$ is the strong  $ t$ parameter maximal function appropriate 
to this setting, namely 
\begin{equation*}
\operatorname M f \eqdef \sup _{R\in \mathcal D _{\vec d}} 
\frac {\mathbf 1 _{2R}} {\abs{ 2R}} \int _{2R} \abs{ f (y)}\; dy\,. 
\end{equation*}
Observe that for appropriate $ c$ if $ \sigma (R)\subset U$ then we have 
$ R\subset V$.  

We can estimate 
\begin{align*}
\sum _{\vec\varepsilon \in \operatorname {Sig} _{\vec d}}
\sum _{R\subset U} \abs{\ip  \operatorname \Sh _{\textup{scale}, \rho }f,  w ^{ \vec\varepsilon }_R, } ^2 
&= 
\rho   \sum _{\vec\varepsilon \in \operatorname {Sig} _{\vec d}}
\sum _{R\subset U} \abs{ \ip f  , w ^{\vec\varepsilon } _{\sigma ^{-1} (R)} , } ^2 
\\
&\le 
\rho   \sum _{\vec\varepsilon \in \operatorname {Sig} _{\vec d}}
\sum _{R\subset V} \abs{ \ip f  , w ^{\vec\varepsilon } _{R}, } ^2
\\
&\le \rho   \norm b. \textup{BMO}. ^2 \abs{ V}\,. 
\end{align*}
It remains to estimate $ \abs{ V} $ in terms of $ \abs{ U}$.

Using the $ L^p $ mapping properties of the maximal function, we can estimate 
\begin{equation*}
   \abs{ V} \lesssim \rho  ^{-p} \abs{ U}\,.
\end{equation*}
Taking $ p=1+\kappa $ proves our theorem. 
\end{proof}

\begin{remark}\label{r.log}  When the number of parameters $ t=1$, 
the operators $ \operatorname \Sh _{\textup{scale}, \rho }$ are in fact uniformly bounded 
on $ \textup{BMO}$ as follows from the weak $L^1 $ bound for the maximal function.
For $ t>1$, there is a logarithmic estimate.
\begin{equation*}
\norm \operatorname \Sh _{\textup{scale}, \rho } . \textup{BMO} \to 
\textup{BMO} . \lesssim (1+ \log 1/ \rho  ) ^{t}.
\end{equation*}
The strong 
maximal function we are using satisfies $ \norm \operatorname M.p\to p. \lesssim 
(p-1) ^{-t}$, aside from dimensional considerations from the individual 
components of $ \vec d$.  Using this estimate, and taking $ p-1 \simeq \abs{\log  \rho } ^{-1}$, 
the estimate above follows. 
\end{remark}

\smallskip 

We define `location shift' operators.  Let $ \lambda_n \mid \mathcal D_{\vec d} 
\longrightarrow \mathcal D_{\vec d} $ be a one to one map such that for all rectangles 
$ R\in \mathcal D _{\vec d}$,  the image rectangle $ \lambda _{n} (R)$ has the same 
dimensions in each coordinate, namely 
\begin{equation*}
\abs{ Q_s } = \abs{  \lambda _{n} (Q)_s}\,, 
\qquad 1\le s\le t\,. 
\end{equation*}
Moreover, $ \lambda (R)\subset  n R$.   The shift operator is then  
defined on wavelets by 
\begin{equation}\label{e.locationScale}
\operatorname \Sh _{\textup{loc},n} w ^{\vec\varepsilon }_R = w ^{\vec\varepsilon } _{\lambda _n (R)}, 
\end{equation}
and is then extended linearly.  The parameter of this operator is said to be 
$ n$.

The estimate we need concerns the $ L ^{p}$ norms of this operator. 

\begin{proposition}\label{p.locationShift} We have the estimates below, valid
 for all integers $ n$.  
\begin{equation*}
\norm \operatorname \Sh _{\textup{loc},n} .p\to p.  
\lesssim 
n ^{\abs{ \vec d}}
\end{equation*} 
where $ \abs{ \vec d}= d_1+d_2+ \cdots + d_t$ depends only on $ \vec d$. 
\end{proposition}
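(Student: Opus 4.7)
The plan is to reduce the claim to a square-function estimate via the wavelet / Littlewood--Paley characterization of $L^p$ and then execute a duality argument against the strong multiparameter maximal function. By the $L^p$-analogue of Theorem~\ref{t.H^1equiv} (wavelet square-function equivalence, valid for $1<p<\infty$), and the identity $\ip \operatorname{\Sh}_{\textup{loc},n}f, w^{\vec\varepsilon}_{\lambda_n(R)},=\ip f,w^{\vec\varepsilon}_R,$, writing $c_R=\ip f,w^{\vec\varepsilon}_R,$, the claim reduces to
\begin{equation*}
\bigl\|(G')^{1/2}\bigr\|_p \lesssim n^{|\vec d|} \bigl\|G^{1/2}\bigr\|_p, \quad G=\sum_{R,\vec\varepsilon} |c_R|^2\mathbf 1_R/|R|,\ G'=\sum_{R,\vec\varepsilon} |c_R|^2\mathbf 1_{\lambda_n(R)}/|R|.
\end{equation*}
For Meyer wavelets $\mathbf 1_R$ should be replaced by a smooth envelope $[\chi^{(2)}_R]^N$, but the rapid decay lets this be absorbed below.

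For $p>2$, I would test $G'$ against $h\in L^{(p/2)'}$ with $\|h\|_{(p/2)'}=1$. Since $\lambda_n(R)\subset nR$ and $|\lambda_n(R)|=|R|$,
\begin{equation*}
\int G'h\,dx=\sum_R |c_R|^2\frac{1}{|R|}\int_{\lambda_n(R)} h\le n^{|\vec d|}\sum_R |c_R|^2\inf_{x\in R}\operatorname M h(x),
\end{equation*}
where $\operatorname M$ is the strong multiparameter maximal function. Bounding $\inf_{x\in R}\operatorname M h(x)\le |R|^{-1}\int_R \operatorname M h$ and exchanging sum and integral yields $\int G'h\le n^{|\vec d|}\int G\cdot \operatorname M h\lesssim n^{|\vec d|}\|G\|_{p/2}$, using Hölder and the $L^{(p/2)'}$-boundedness of $\operatorname M$ (which requires $p>2$). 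Duality then gives $\|G'\|_{p/2}\lesssim n^{|\vec d|}\|G\|_{p/2}$, hence $\|\operatorname{\Sh}_{\textup{loc},n}f\|_p\lesssim n^{|\vec d|/2}\|f\|_p$, in fact slightly stronger than the stated bound.

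For $1<p<2$, I would pass to the adjoint. The computation $\ip \operatorname{\Sh}^* u,w^{\vec\varepsilon}_R,=\ip u,w^{\vec\varepsilon}_{\lambda_n(R)},$ together with the symmetric containment $\lambda_n(R)\subset nR$ iff $R\subset n\lambda_n(R)$ (immediate for dyadic rectangles of equal dimensions) identifies $\operatorname{\Sh}^*$ as itself a location-shift operator (with the same parameter $n$), restricted to the sub-basis $\{w^{\vec\varepsilon}_{\lambda_n(R)}\}$. Since restricting the wavelet square function to a subfamily of rectangles only decreases it, the estimate of the previous paragraph applies verbatim to $\operatorname{\Sh}^*$ at the conjugate index $p'>2$, yielding $\|\operatorname{\Sh}_{\textup{loc},n}\|_{p\to p}=\|\operatorname{\Sh}^*\|_{p'\to p'}\lesssim n^{|\vec d|/2}$. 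The case $p=2$ is immediate from orthonormality of the wavelet basis.

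The main obstacle is not conceptual but bookkeeping-style: handling the Meyer envelope $[\chi^{(2)}_R]^N$ in place of $\mathbf 1_R$ uniformly inside the maximal-function estimate (absorbed via rapid decay), and making the sub-basis / duality transfer precise across the $p=2$ threshold. All of the heavy lifting is done by the single pointwise observation $|\bar h_{\lambda_n(R)}|\le n^{|\vec d|}\operatorname M h(x)$ for $x\in R$, which is where the factor $n^{|\vec d|}$ enters.
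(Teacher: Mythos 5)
Your proof is correct, but it takes a genuinely different route from the paper. The paper's argument is a direct appeal to the Fefferman--Stein vector-valued maximal inequality: one writes $\norm{\operatorname \Sh_{\textup{loc},n} f}.p. \lesssim \norm{\bigl[\sum_R \abs{c_R}^2\abs{R}^{-1}\mathbf 1_{\lambda_n(R)}\bigr]^{1/2}}.p.$, bounds $\mathbf 1_{\lambda_n(R)} \le n^{2\abs{\vec d}}(\operatorname M\mathbf 1_R)^2$, and then invokes Fefferman--Stein to replace $(\operatorname M\mathbf 1_R)^2$ by $\mathbf 1_R$ uniformly for all $1<p<\infty$. This is a one-step argument with no case split in $p$. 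You instead prove a \emph{scalar} Carleson-type embedding for the $\ell^2$-aggregated square function $G'$ by duality against the strong maximal function (valid only for $p>2$ so that $(p/2)'<\infty$), and then recover $1<p<2$ by passing to the adjoint, correctly identifying $\operatorname\Sh^*$ as a location shift of comparable parameter composed with a wavelet projection onto the sub-basis $\{w_{\lambda_n(R)}\}$. Both routes are valid; yours is a bit longer but, as you note, actually yields the sharper bound $n^{\abs{\vec d}/2}$, since you avoid the squaring loss that the paper incurs when matching $\mathbf 1_{\lambda_n(R)}$ against $(\operatorname M\mathbf 1_R)^2$. Two small remarks: (i) your worry about replacing $\mathbf 1_R$ by the smooth envelope $[\chi^{(2)}_R]^N$ is unnecessary --- the indicator form of the wavelet square function already characterizes $L^p$ for the Meyer basis, so the duality computation goes through without modification; (ii) in the adjoint step, ``applies verbatim'' should really read ``applies after composing with the $L^{p'}$-bounded wavelet projection onto the image sub-basis,'' which you implicitly acknowledge but is worth saying explicitly since $\operatorname\Sh^*$ is not literally a location shift on the full basis.
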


\begin{proof}
Since $ \lambda_n $ is one to one, it is clear that $  \operatorname \Sh _{\textup{loc}}$ 
is bounded with norm one on $ L^2$. 
For $ p\neq 2$ we use the Littlewood--Paley 
inequalities, together with the obvious fact that 
\begin{equation*}
\mathbf 1 _{\lambda _n (R)} \lesssim   n ^{\abs{ \vec d}}\operatorname M \mathbf 1 _{R}
\end{equation*}
for all rectangles $ R$.  Then, using the Fefferman--Stein Maximal inequality, we have 
\begin{align*}
\norm  \operatorname \Sh _{\textup{loc},n} f.p. 
	& \lesssim  
	\NOrm \Bigl[\sum _{\vec\varepsilon \in \textup{Sig} _{\vec d}} 
	\sum _{R\in \mathcal D ^{\vec d}} \frac { \abs{ \ip f, w ^{\vec\varepsilon }_R ,} ^2 } 
	{\abs{ R}} \mathbf 1 _{\lambda _n (R)}  \Bigr] ^{1/2} .p. 
\\
& \lesssim n ^{\abs{ \vec d}}
\NOrm \Bigl[\sum _{\vec\varepsilon \in \textup{Sig} _{\vec d}} 
	\sum _{R\in \mathcal D ^{\vec d}} \frac { \abs{ \ip f, w ^{\vec\varepsilon }_R ,} ^2 } 
	{\abs{ R}} (\operatorname M\mathbf 1 _{R} ) ^2  \Bigr] ^{1/2} .p. 
\\
& \lesssim n ^{\abs{ \vec d}}
\NOrm \Bigl[\sum _{\vec\varepsilon \in \textup{Sig} _{\vec d}} 
	\sum _{R\in \mathcal D ^{\vec d}} \frac { \abs{ \ip f, w ^{\vec\varepsilon }_R ,} ^2 } 
	{\abs{ R}}  \mathbf 1 _{R}  \Bigr] ^{1/2} .p. 
\\
& \lesssim n ^{\abs{ \vec d}} \norm f.p. .
\end{align*}

\end{proof}

\subsection*{Generalized Paraproducts}

Experience shows that paraproducts arise in a variety of ways.  They do 
in this paper, and in this section, we adopt a notation to formalize 
the different ways that the paraproducts arise.  

Given an operator $ \operatorname P$  acting on $ L ^{p}( \mathbb R ^{\vec d})$, we set 
\begin{equation}\label{e.paraNorm}
\norm \operatorname P. \textup{Para}. 
=\inf \Bigl\{ \sum _{\epsilon } \sum _{\rho    }  
\sum _{n }    n ^{\abs{ \vec d}} \rho ^{-1/\abs{ \vec d}}
\abs{c (\epsilon ,\rho,n) } \Bigr\}
\end{equation}
where the infimum is taken over all representations 
\begin{align*}
\operatorname P f=  \sum _{\epsilon } \sum _{\rho    }  
\sum _{n }   
c (\epsilon ,\rho  ,n)    \cdot  
\operatorname B _{\epsilon , \rho   , n} ( 
\operatorname \Sh _{\textup{scale}, \rho} b, \operatorname \Sh _{\textup{Sig}, \epsilon } 
\operatorname \Sh _{\textup{loc}, n} f)\,.
\end{align*}
In this display,
the operators $ \operatorname B _{\epsilon , \rho   , n}  $ are paraproducts 
as in Theorem~\ref{t.MainParaproducts}, with norm at most one. 
The operators $ \operatorname \Sh _{\textup{scale}, \rho}$ are scale 
shift operators, with parameter $ \rho $; 
the $ \operatorname \Sh _{\textup{Sig}, \epsilon }$ are signature shift operators; 
and $ \operatorname \Sh _{\textup{loc}, n}$ are location shift operators 
of parameter $ n$.

We may combine the different results of this section into the estimate 
\begin{equation}\label{e.genearlizedParaproduct}
\norm \operatorname P. p\to p. \lesssim \norm \operatorname P. \textup{Para}. 
\,, \qquad 1<p<\infty \,. 
\end{equation}
Examples of how to use this norm are in the next proof.

\begin{proof}[Proof of Theorem~\ref{t.Ud}.] 
We will assume that the Calder\'on--Zygmund operator $ \operatorname T$ is the 
identity.  It is straightforward to supply the necessary additional 
details to accommodate the general case.

The  `father wavelet'   $W$ permits us to rewrite the operator in (\ref{e.Uu}).  
For a subset of coordinates $ J\subset \{1,\dotsc,t\}$ we set 
\begin{equation*}
W _{R,J}(x_1,\dotsc ,x_t) \eqdef \prod _{s\in J} w_{Q_s}^{\varepsilon_s}(x_s)\cdot 
\prod _{s\not\in J} W _{Q_s}(x_s)\,.
\end{equation*}
Thus, in the coordinates in $ J$ we take the Meyer wavelet, 
and for those coordinates not in $J$ we take a father wavelet. 
In particular, $ \operatorname F_{\vec l,J}$ as defined in (\ref{e.Uu}) is 
\begin{equation*}
\operatorname F_{\vec l,J} = 
\sum_{\vec\varepsilon\in\textup{Sig}_{\vec d}}\, \sum_{\substack{R \in \mathcal D ^{\vec d} 
\\ \abs{ R_s}= 2 ^{l_s}}}
W _{R,J} \otimes W _{R,J}\,. 
\end{equation*}

 Let $ \vec k\in \mathbb Z ^{\vec t}$ be as in (\ref{e.vecK}). 
Let  $ R, R'$ be dyadic rectangles  with 
\begin{equation}\label{e.Pa}
\abs{ Q_s} = 2 ^{k_s} \abs{ Q'_s}\,, \quad 1\le s \le t \,, 
\qquad 
A \simeq \operatorname M \mathbf 1 _{R} (c (R')) \,. 
\end{equation}
The function 
\begin{equation*}
\zeta _{R,R',J} \eqdef  A ^{-N }\sqrt {\abs{ R}} w_R \cdot W _{R',J}
\end{equation*}
is adapted to $ R$, in the sense of (\ref{e.Tensoradapted}).  Here $ N $ is a fixed large constant 
depending upon $ \vec d$.

The assumption (\ref{e.vecK}) plays an essential role in describing the 
zeros of the function $ \zeta _{R,R',J}$. 
$ W_{R',J}$ has zeros for $ s\in J$, but certainly does not have zeros for $ s\not\in J$. 
The properties of the Meyer wavelet, and in particular (\ref{e.Fourier}), 
along with the assumption on $ \vec k$ then imply that $ \zeta _{R,R',J}$ 
has zeros for $ s\not\in J$.

Now, consider a map $ \pi_A \mid \mathcal D_{\vec d} \longrightarrow \mathcal D_{\vec d}$ such that 
the pairs $ R, \pi (R)$ satisfy (\ref{e.Pa}). Set $ \mu (\pi )=A$ where 
$ A$ is as in (\ref{e.Pa}). The operator 
\begin{equation*}
\operatorname B _{\pi } (b,\varphi ) \eqdef \sum _{R\in \mathcal D_{\vec d}} 
\frac {\ip b, w_R,} {\sqrt {\abs{ R}}} \, \ip \varphi , W _{ \pi (R),J} ,  
\, \zeta _{R,\pi (R),J}
\end{equation*}
is a paraproduct, composed with a change of location operator.  Note that 
the function that falls on $ b$ has zeros in all coordinates; the function 
that falls on $ \varphi $ has zeros for $ s\in J$, and $ \zeta _{R, \pi (R)}$ 
has zeros for $ s\not\in J$.  It is then clear that 
\begin{equation}\label{e.mu}
\norm \operatorname B _{\pi }. \textup{Para}. \lesssim \mu (\pi) ^{N} \,.  
\end{equation}

Now, a moments thought reveals that we can write, for appropriate choices 
of $ \pi _{v}$, 
\begin{equation} \label{e.MU}
 \sum _{\vec l\in \mathbb Z ^{t}} (\Delta 
\operatorname F_{\vec l}\, b)\cdot 
\operatorname F_{\vec l+\vec k,J} \varphi  
=\sum _{v=1} ^{\infty } \operatorname B _{\pi_v } (b, \varphi )\,. 
\end{equation}
Moreover, for all $ 0<A<1$, the number of $ \pi_v $ occurring in the sum 
above with $ \mu (\pi _v) \simeq A$ is at most $ A ^{-C}$ where $ C$ depends 
upon $ \vec d$.  But then from (\ref{e.mu}), it is clear that (\ref{e.Ud}) holds.

\bigskip 

The second conclusion of the Lemma, (\ref{e.localized}), is quite important to the proof 
of our lower bounds on commutator norms.\footnote{Estimates of this type are also 
important to detailed information about norm bounds for paraproducts.  See 
\cite[\S4.3]{math.CA/0502334}.}  But with the assumption (\ref{e.Localized}),
note that we can 
again have the equality (\ref{e.MU}), but with this additional property: For all 
$ v$, we have $ \mu (\pi _v) \lesssim A$.  It is then clear that (\ref{e.localized}) 
holds. 

\end{proof}

\section{The Upper Bound } \label{s.upper}

Let $ K$ be a standard Calder\'on--Zygmund convolution 
kernel on $ \mathbb R ^{d} \times 
\mathbb R ^{d}$.  This means that the kernel is a distribution that 
satisfies the estimates below for $ x\neq y$
\begin{equation} \label{e.CZ}
\begin{split}
\abs{ \nabla ^{j} K (y)} &\le N  \abs{ y} ^{-d-j}\,, \quad j=0,1,2, \dotsc, d+1\,. 
\\
\norm \widehat K . L ^{\infty } (\mathbb R ^{\vec d}). &\le N  \,. 
\end{split}
\end{equation}
The first  estimate combines the standard size and 
smoothness estimate.\footnote{Our proof requires  a large number of  derivatives  
on the kernel, due to an argument in \S\ref{s.oneParameter}.}
The last, and critical, assumption is equivalent 
to assuming that the operator defined on Schwartz functions by 
\begin{equation*}
\operatorname T _{K} f (x) \eqdef \int K (x-y) f (y)\; dy
\end{equation*}
extends to a bounded operator on $ L^2 (\mathbb R ^{d})$.  
The least constant  $ N$ satisfying the inequalities (\ref{e.CZ}) 
and $ \norm \operatorname T _{K}.2\to 2. \le N $ 
is some times 
referred to as the \emph{Calder\'on--Zygmund norm of $ K$.}

 Now let $ K_1,\dotsc,K_t$ be a collection of Calder\'on--Zygmund kernels, 
 with $ K_s$ defined on $ \mathbb R ^{d_s} \times \mathbb R ^{d_s}$.  
 It is not obvious that the corresponding tensor product operator 
 \begin{equation*}
T _{K_1} \otimes \cdots \otimes T _{K_t}
\end{equation*}
is a bounded operator on $ L^p (\mathbb R ^{\vec d})$.  This is a consequence of the 
multiparameter Calder\'on--Zygmund theory. 
This is a basic fact for us, so we state it here.  

\begin{pCZ}
\label{t.T-Journe} 
Let $ K_1,\dotsc,K_t$ be a collection of Calder\'on--Zygmund convolution kernels, 
 with $ K_s$ defined on $ \mathbb R ^{d_s} \times \mathbb R ^{d_s}$. 
 Then 
\begin{equation*}
\operatorname T _{K_1} \otimes \cdots \otimes \operatorname T _{K_t}
\end{equation*}
extends to a bounded linear operator from $ L^p (\mathbb R ^{\vec d})$
to itself for all $ 1<p<\infty $.  
\end{pCZ}

It is also not at all clear that the multiparameter commutators are bounded 
operators, even in the case of the Riesz transforms.
Thus, this is 
one of the principal results of this paper. 

\begin{theorem}\label{t.upper}  We have the estimates below, valid for $ 1<p<\infty $. 
\begin{equation}\label{e.BMOupper}
\norm  [ \operatorname T _{K_1}, 
\cdots [ \operatorname T _{K_t},\operatorname M_b]
\cdots ] .p\to p. \lesssim  \norm b.\textup{BMO}.\,.
\end{equation}
By $ \textup{BMO}$, we mean Chang--Fefferman $ \textup{BMO}$. 
The implied constant depends upon the vector $ \vec d$, and the 
Calder\'on--Zygmund norm of the $ \operatorname T _{K_s}$. 
\end{theorem}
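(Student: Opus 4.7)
The plan is to decompose the iterated commutator in \eqref{e.BMOupper} into a finite sum of operators of the form bounded by Theorem~\ref{t.Ud}, and then combine the resulting $L^2$ estimates with the product Calder\'on--Zygmund bound of Theorem~\ref{t.T-Journe} and the $L^p$ extension \eqref{e.genearlizedParaproduct} of the paraproduct norm.

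First, I would fix Meyer wavelet bases on each factor $\mathbb R^{d_s}$ and take the tensor-product basis on $\mathbb R^{\vec d}$. Expand the symbol $b$ and the input $f$ in this basis. Using the Meyer Fourier-support property \eqref{e.Fourier}, the pointwise product $b\cdot f$ decomposes into pieces indexed by scale comparisons between the $b$-wavelet and the $f$-wavelet in each of the $t$ coordinates, with three cases per coordinate: the $b$-scale much larger, comparable, or much smaller than the $f$-scale. The father-wavelet identity \eqref{e.FATHER} collapses the sum of larger-scale wavelets on either side into a single father wavelet at the smaller scale.

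Second, the commutator identity $[\operatorname T,\operatorname M_b]f = \operatorname T(bf) - b\,\operatorname T(f)$, applied iteratively in the $t$ parameters, turns this classification into a sum of paraproducts. In coordinates where the $b$-scale dominates, the near-constancy of $w_R$ on the support of $w_{R'}$ (with $\abs{R}\gg\abs{R'}$) forces $\operatorname T(bf)$ and $b\,\operatorname T(f)$ to cancel to leading order, the residual being itself a controlled paraproduct. In coordinates where the $f$-scale dominates or the scales are comparable, one obtains paraproducts in which the symbol $b$ carries mean-zero wavelets in every coordinate, while the function on the other side carries father-wavelet structure precisely in the coordinates where the $b$-scale is smaller. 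After the full classification, every surviving piece has the form
\begin{equation*}
\sum_{\vec l \in \mathbb Z^t}(\Delta \operatorname F_{\vec l} b)\cdot \operatorname T'\operatorname F_{\vec l + \vec k, J} f
\end{equation*}
for some $J\subset\{1,\ldots,t\}$ (the coordinates of comparable scales), bounded offset $\vec k$ satisfying \eqref{e.vecK}, and product Calder\'on--Zygmund operator $\operatorname T'$ assembled from $\operatorname T_{K_1},\ldots,\operatorname T_{K_t}$ and identities. Theorem~\ref{t.Ud} then bounds each such term on $L^2$ by $\norm b.\textup{BMO}.\norm f.2.$, and the localization estimate \eqref{e.localized} provides the off-diagonal decay needed to sum over the (finitely many) remaining configurations.

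Finally, passing from $L^2$ to arbitrary $L^p$ with $1<p<\infty$ is handled by the generalized-paraproduct norm bound \eqref{e.genearlizedParaproduct}, together with Theorem~\ref{t.T-Journe} to absorb the ambient $\operatorname T'$ factors. The main obstacle I anticipate is the combinatorial bookkeeping of the $t$-fold commutator expansion combined with the $3^t$ scale-comparison cases, and verifying that after cancellation each surviving term has precisely the zero-pattern of wavelet coefficients demanded by Theorem~\ref{t.MainParaproducts}: the symbol $b$ must carry mean-zero wavelets in every coordinate, and the function on the other side must carry mean-zero wavelets in every coordinate outside $J$. The Meyer Fourier support \eqref{e.Fourier} is what makes the $\abs{R}\gg\abs{R'}$ cancellations come out cleanly, producing exactly the paraproduct structure that Theorem~\ref{t.Ud} can handle.
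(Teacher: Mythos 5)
Your strategy is recognizably in the spirit of the paper's (expand $b$ and $f$ in Meyer wavelets, classify by scale comparison in each coordinate, exploit Fourier support and cancellation, land on paraproducts), but it diverges from the paper's route in one structural way and contains one genuine gap.

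\textbf{Different route.} The paper does not carry out the multiparameter scale trichotomy directly. Instead it proves the one-parameter statement Proposition~\ref{p.oneParameter}: a single commutator $[\operatorname T_K, \operatorname M_b]$ is an absolutely convergent sum of paraproducts composed with signature/scale/location shifts, with controlled $\textup{Para}$-norm. It then observes that the iterated commutator factors as a tensor product of one-parameter commutators once $b$ is expanded in the product wavelet basis, and invokes Journ\'e's Theorem~\ref{t.MainParaproducts} (tensor products of bounded paraproducts are bounded) to conclude. This keeps the case analysis one-dimensional. Your direct multiparameter expansion is not wrong in principle, but it imports all $3^t$ scale configurations at once and forgoes the cleanest place to inject tensoring.

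\textbf{Genuine gap.} You claim that after the full classification ``every surviving piece has the form $\sum_{\vec l}(\Delta\operatorname F_{\vec l}b)\cdot\operatorname T'\operatorname F_{\vec l+\vec k,J}f$ with $\vec k$ satisfying \eqref{e.vecK}.'' But \eqref{e.vecK} forces $k_s\ge 3$ for $s\notin J$ and $\abs{k_s}\le 8$ for $s\in J$; by the definition \eqref{e.Uu} of $\operatorname F_{\vec l+\vec k,J}$, this means that in every coordinate the $f$-wavelet scale is either comparable to the $b$-wavelet scale (the $J$ coordinates) or \emph{coarser} (the $J^c$ coordinates). There is no $\vec k$ covered by Theorem~\ref{t.Ud} for which the $f$-wavelet is much finer than the $b$-wavelet in some coordinate. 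That is exactly the ``$b$-scale dominates'' case you identify, and it cannot be fed into Theorem~\ref{t.Ud}. The residual after the near-constancy cancellation is a paraproduct whose input and output wavelets live at the fine $f$-scale while the $b$-wavelet lives at a much coarser scale; to bound it one needs (i) a quantitative decay in the scale separation and in the spatial separation of the two cubes, and (ii) the scale- and location-shift machinery of \S\ref{s.para} to absorb that separation into the $\textup{Para}$-norm. The paper supplies (i) by the pointwise estimate \eqref{e.QQ'} on $[\operatorname T_K,\operatorname M_{w_Q^{\varepsilon}}]w_{Q'}^{\varepsilon'}$ (proved using the kernel smoothness \eqref{e.CZ} and the vanishing moments \eqref{e.wPerp} of the Meyer wavelet) and (ii) by the shift operators $\operatorname\Sh_{\textup{scale},\rho}$ and $\operatorname\Sh_{\textup{loc},n}$ with the weighted $\textup{Para}$-norm \eqref{e.paraNorm}. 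None of this is supplied by Theorem~\ref{t.Ud}, which is aimed at a different sum (and is used in the paper only for the \emph{lower} bound). So ``the residual being itself a controlled paraproduct'' is precisely the unproved claim. To make your proposal work you would need to re-prove \eqref{e.QQ'}, at which point you have essentially re-derived Proposition~\ref{p.oneParameter} and would be better off following the paper's tensoring shortcut.

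A smaller imprecision: the summability over the coarse-$b$ cases is not ``finitely many remaining configurations'' — the scale offset $k_s$ runs over an unbounded range there, and the absolute convergence of the sum is exactly what \eqref{e.QQ'} buys via the factor $(\abs{Q}/\abs{Q'})^{1+1/2d}$ balancing the $\rho^{-\kappa}$ cost of the scale shift in Theorem~\ref{t.scaleShift}.
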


There are two corollaries of this that we will use below.  
For a symbol $ b$  on $ \mathbb R ^{\vec d}$, define 
\begin{equation}\label{e.RieszNorm}
\norm b. \textup{Riesz},p. \eqdef 
\sup 
\norm  [ \operatorname R_{j_1}, 
\cdots [ \operatorname R _{j_t},\operatorname M_b]
\cdots ] .p\to p.\,, 
\qquad 1<p<\infty \,. 
\end{equation}
where the supremum is formed over all choices of Riesz transforms $ \operatorname 
R _{j_s}$ for $ 1\le j_s\le d_s$.\footnote {In the case that $ d_s=1$, the Riesz 
transforms reduce to the Hilbert transform.}
The Riesz transforms of course fall 
under the purview of the Theorem above, so we see that 
$ \norm b.\textup{Riesz},p. \lesssim \norm b. \textup{BMO}.$.  
This is half of our Main Theorem, and the other half  is the reverse inequality.

\medskip 

We will have need of another class of singular integral operators 
besides the Riesz transforms, with the Fourier transform of these 
kernels--the symbol of the kernel--being well adapted to a \emph{cone} in $ \mathbb R ^{d}$.

Suppose that the dimension $ d\ge2$. 
A \emph{cone} $ C\subset \mathbb R ^{d}$ 
is specified by the data $ (\xi _C, Q)$ where $ \xi _C\in \mathbb R ^{d}$ 
is a unit vector referred to as the \emph{direction of the cone} 
and $ Q\subset \mathbb R ^{d-1}$ is a cube centered at the origin.  The 
cone consists of all vectors $ \theta $ given in orthogonal coordinates 
$ (\theta _\xi \xi, \theta _\perp)$ with $ \theta _{\xi }= \theta \cdot \xi $, 
and $ \theta _\perp \in \theta _\xi Q$.   
   For $ 0<\lambda $ by $ \lambda C$ we mean the cone with data $ (\xi _C, \lambda Q)$. 
 By the \emph{aperture of $ C$} we mean $ \abs{ Q}$.

The Fourier restriction 
operator specified by $C$ should be bounded on all $ L^p$ spaces,  Namely 
the operator defined by 
\begin{equation} \label{e.coneProjection}
\widehat { \operatorname P_C  }f \eqdef \mathbf 1 _{C} \widehat f  
\end{equation}
should admit a uniform bound on all $ L^p (\mathbb R ^{d})$ spaces.  
By taking the boundary of the cone to be a cube this is certainly the case: 
Compositions of Fourier projections onto half spaces yields 
$ \operatorname P_C $, so it 
will not be given by composition with respect to a (one parameter)
Calder\'on--Zygmund kernel as in (\ref{e.CZ}).

For a cone in $ C\subset 
\mathbb R ^{d}$, we fix a Calder\'on--Zygmund kernel $ K_C$ which satisfies the 
size and smoothness assumptions above, and in addition, 
\begin{equation} \label{e.kappa}
\mathbf 1 _{C} \le \widehat {K_C} \le \mathbf 1 _{ (1+\kappa )C }\,.  
\end{equation}
Here, we introduce a small parameter 
$ \kappa $ which will depend upon dimension $ \vec d$.
Moreover, we choose the cone operator to make a sufficiently smooth transition from 
$ 0$ to $ 1$ that the operator  $ \operatorname { T} _{C}$ with symbol
given by $ K_C$
defines a Calder\'on--Zygmund operator, bounded on all $ L^p$, $ 1<p<\infty $.

There is however an essential point to observe: That the kernel $ K_C $ 
satisfies the Calder\'on--Zygmund estimates (\ref{e.CZ}), but with constants 
that tend to infinity as the aperture of the cone tends to infinity.  In the limit, 
the kernels $ K_C$ tend to a projection of a one dimensional Calder\'on--Zygmund 
kernel.\footnote {The operators admit uniform $ L^p$ bound in the aperture, but 
we need to apply a Theorem of Song-Ying Li \cite{MR1373281} which only applies if the 
kernels are Calder\'on--Zygmund on $ \mathbb R ^{d}$. }

But, with the aperture fixed,
in each dimension, we can choose these kernels to be rotations of one 
another, so that they admit uniform bounds in $ L^p (\mathbb R ^{d})$.  
We will refer to the operator $ \operatorname T_C $ given by convolution 
with $ K_C$ as a \emph{Cone transform.}  

As a matter of convention, in the case of $ d=1$, there are two cones, 
$ \mathbb R _{\pm}$. The Cone 
transforms are the corresponding projections onto the positive and negative 
frequency axes.  These are of course linear combinations of the 
identity and the Hilbert transform, which coincide with the Riesz transforms.

We now define a third norm on a symbol $ b$  on $ \mathbb R ^{\vec d}$
\begin{equation}\label{e.ConeNorm}
\norm b. \textup{Cone},p. \eqdef 
\sup 
\norm  [ \operatorname T_{C_1}, 
\cdots [ \operatorname T _{C_t},\operatorname M_b]
\cdots ] .p\to p.\,, \qquad 1<p<\infty \,. 
\end{equation}
where the supremum is formed over all choices of Cone transforms 
$ T _{C_s}$ with $ C_s\subset \mathbb R  ^{d_s}$ in which the aperture of the 
cone is fixed.\footnote{Later in the proof, we will specify an aperture.} 
It follows that we also have $ \norm b. \textup{Cone},p. \lesssim \norm b. \textup{BMO}.$.
This is an important observation for us, so let us formalize it in the following 
Corollary of Theorem \ref{t.upper}, which includes half of our Main Theorem. 

\begin{corollary}\label{c.upper} We have the inequalities 
\begin{equation*}
\norm b. \textup{Riesz},p.\,,\, \norm b. \textup{Cone},p. 
\lesssim 
\norm b. \textup{BMO}. \,, \qquad 1<p<\infty \,. 
\end{equation*}
For the inequality concerning Cone operators, the implied constant depends 
upon the aperture of the cones.
\end{corollary}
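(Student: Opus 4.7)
The plan is to deduce both inequalities directly from Theorem~\ref{t.upper}, by verifying that the Riesz transforms and the cone transforms each fit into the class of Calder\'on--Zygmund convolution kernels satisfying the hypotheses (\ref{e.CZ}) with a uniform bound.

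First, for the Riesz bound, each Riesz transform $\operatorname R_{s,j_s}$ on $\mathbb R^{d_s}$ is a standard Calder\'on--Zygmund convolution operator whose kernel satisfies (\ref{e.CZ}) with a constant depending only on $d_s$. Hence for any choice of directions $\vec\jmath=(j_1,\ldots,j_t)$, Theorem~\ref{t.upper} applies with $K_s$ equal to the kernel of $\operatorname R_{s,j_s}$, yielding
\begin{equation*}
\Norm [\operatorname R_{j_1},\cdots[\operatorname R_{j_t},\operatorname M_b]\cdots].p\to p. \lesssim \norm b.\textup{BMO}.\,.
\end{equation*}
Taking the supremum over $\vec\jmath$ in the definition (\ref{e.RieszNorm}) gives $\norm b.\textup{Riesz},p. \lesssim \norm b.\textup{BMO}.$.

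For the cone bound, the essential observation is the one already flagged in the text preceding (\ref{e.ConeNorm}): once the aperture of each cone $C_s\subset \mathbb R^{d_s}$ is fixed, the kernel $K_{C_s}$ constructed in (\ref{e.kappa}) satisfies the Calder\'on--Zygmund estimates (\ref{e.CZ}) with a constant depending on the aperture (and on $d_s$), and $\operatorname T_{C_s}$ is bounded on all $L^p(\mathbb R^{d_s})$. Moreover, by the convention made in dimension one, the cone transforms reduce to a linear combination of the identity and the Hilbert transform, which is again covered by (\ref{e.CZ}). Applying Theorem~\ref{t.upper} with $K_s = K_{C_s}$ therefore gives
\begin{equation*}
\Norm [\operatorname T_{C_1},\cdots[\operatorname T_{C_t},\operatorname M_b]\cdots].p\to p. \lesssim \norm b.\textup{BMO}.\,,
\end{equation*}
with implied constant depending on the (fixed) apertures of the cones and on $\vec d$. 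Taking the supremum in (\ref{e.ConeNorm}) yields $\norm b.\textup{Cone},p. \lesssim \norm b.\textup{BMO}.$, completing the reduction.

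The only subtlety to keep track of is the dependence on the aperture: as the aperture tends to infinity the Calder\'on--Zygmund constants of $K_C$ blow up (in the limit $K_C$ degenerates to a projection of a one-dimensional kernel), so the implied constant in the cone estimate genuinely depends on the aperture. Since the aperture is fixed in the definition (\ref{e.ConeNorm}), this dependence is harmless, and no work beyond invoking Theorem~\ref{t.upper} is required.
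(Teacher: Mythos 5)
Your proof is correct and matches the paper's reasoning: the paper presents this corollary as an immediate consequence of Theorem~\ref{t.upper}, noting that both Riesz transforms and cone transforms (for a fixed aperture) are Calder\'on--Zygmund convolution operators satisfying (\ref{e.CZ}), with the same caveat you give about the aperture dependence of the constants for the cone kernels. No divergence from the paper's approach.
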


\begin{remark}\label{r.needingCones}  
In the one dimensional case, a `cone' is just a projection 
onto the positive axis say, and most of the considerations of this 
section are not needed.  For the sake of exposition, in this section 
we will assume that all the coordinates of $ \vec d=(d_1,\dotsc,d_t)$ 
are at least two.  The case when some coordinates are one 
is technically easier, but more difficult in terms of accommodating the 
general argument into the notation. 
\end{remark}

Let us formalize the extended version of our Main Theorem. 

\begin{extended}\label{t.extended} For all $ t\ge 1$ and choices of $ \vec d$ we have 
\begin{equation*}
\norm b. \textup{Riesz},p. \simeq  \norm b. \textup{Cone},p. 
\simeq 
\norm b. \textup{BMO}. \,, \qquad 1<p<\infty \,. 
\end{equation*}
The implied constants depend upon the vector $ \vec  d$ and the aperture of the cone. 
\end{extended}

We find it necessary to prove the equivalence  between the $ \textup{BMO}$ 
and Cone norms in order to deduce the equivalence with the Riesz norm.

\subsection{A One Parameter Result}\label{s.oneParameter}

A commutator is a special form of a paraproduct.  Our approach to 
Theorem~\ref{t.upper} is obtain a decomposition of a one parameter commutator  
into a sum of paraproducts.  The tensor product of the elements of our 
decomposition are themselves bounded operators, so we can then pass to the 
multiparameter statement of the Theorem. 

\begin{remark}\label{r.tensors} 
The multiparameter setting is related to the tensor products of dilation groups. 
An essential difficulty is that the tensor product of bounded operators need not be bounded.  
See \cite{MR837350}.  
And so it will be incumbent upon us to describe sufficient conditions on the operators 
for the  tensor products to be bounded, and reduce the commutators above to these 
settings. 
\end{remark}

A result of this type, expressing a commutator as a sum of paraproducts, is 
known to experts, and has been used in \cite{MR1349230}, and may well have 
been formulated in this way before. 

\begin{proposition}\label{p.oneParameter}
For any Calder\'on--Zygmund kernel satisfying (\ref{e.CZ}),
and symbol $ b$ we can write the commutator $ [ \operatorname T _K, \operatorname M_b]$
as an absolutely convergent sum of paraproducts  composed with 
signature, scale and location shifts.  That is, using the notation in (\ref{e.paraNorm}), 
\begin{gather*}
\norm [ \operatorname T _K, \operatorname M_b] . \textup{Para}. \lesssim 1 
\end{gather*}
\end{proposition}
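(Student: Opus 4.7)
The plan is to express the commutator as an absolutely convergent sum of paraproducts by expanding both $b$ and $f$ in the Meyer wavelet basis and organizing the resulting double sum according to the relative scale and location of the two wavelets. Expanding, one obtains
\begin{equation*}
[T_K,M_b]f = \sum_{I,J,\varepsilon,\varepsilon'}\langle b,w_I^{\varepsilon}\rangle\langle f,w_J^{\varepsilon'}\rangle\,\Phi_{I,J,\varepsilon,\varepsilon'},
\qquad \Phi_{I,J,\varepsilon,\varepsilon'} := T_K(w_I^{\varepsilon}w_J^{\varepsilon'})-w_I^{\varepsilon}T_Kw_J^{\varepsilon'}.
\end{equation*}
I group the pairs $(I,J)$ by their scale ratio $m\in\mathbb Z$ defined via $|I|=2^{md}|J|$ and by a location parameter $A\in\mathbb N$ with $A\simeq 1+\mathrm{dist}(I,J)/\min(|I|,|J|)^{1/d}$. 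The goal is to show that for each quadruple $(m,A,\varepsilon,\varepsilon')$ the partial sum over these pairs is a single model paraproduct $B$ composed with a scale shift of ratio $\rho=2^{-|m|d}$ acting on $b$, a location shift of parameter $n\simeq A$ acting on $f$, and a signature shift interchanging $\varepsilon$ and $\varepsilon'$, and that the resulting scalar coefficient $c(m,A,\varepsilon,\varepsilon')$ decays rapidly enough in both $|m|$ and $A$ to render the total $\mathrm{Para}$-norm finite.

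The heart of the argument is a pointwise estimate for $\Phi_{I,J,\varepsilon,\varepsilon'}$. In the coarse regime $m\gg 0$, write
\begin{equation*}
\Phi(x)=\int K(x-y)\bigl[w_I^{\varepsilon}(y)-w_I^{\varepsilon}(x)\bigr]w_J^{\varepsilon'}(y)\,dy
\end{equation*}
and Taylor-expand $w_I^{\varepsilon}$ about $x$ to order $M$. The low-order polynomial terms in $y-x$ can be integrated against $K(x-\cdot)w_J^{\varepsilon'}$ using the vanishing moments of the Meyer wavelet together with the many derivatives assumed on $K$ in (\ref{e.CZ}); the remainder is controlled by the adaptedness (\ref{e.adapted}) of $w_I^{\varepsilon}$ on the scale of $J$. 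The outcome is that $\Phi_{I,J,\varepsilon,\varepsilon'}$ is a bump adapted to $J$ with zero mean, multiplied by a scalar of size $\lesssim 2^{-M|m|}A^{-M}|J|^{-d/2}|I|^{d/2}$. The fine regime $m\ll 0$ is symmetric: here the Meyer Fourier property (\ref{e.Fourier}) is essential, since it forces $\widehat{w_I^{\varepsilon}w_J^{\varepsilon'}}$ to live at frequencies $\sim|I|^{-1}$, so the symmetric Taylor argument applied to $w_J^{\varepsilon'}$ yields a bump adapted to $I$ with the analogous decay. The diagonal regime $|m|<N_0$ involves only boundedly many scales per $J$ and decomposes directly into a bounded family of model paraproducts.

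With these estimates each partial sum takes the form
\begin{equation*}
c(m,A,\varepsilon,\varepsilon')\cdot B\bigl(\mathrm{Sh}_{\mathrm{scale},\rho}\,b,\;\mathrm{Sh}_{\mathrm{Sig},\epsilon}\mathrm{Sh}_{\mathrm{loc},n}\,f\bigr),
\end{equation*}
where $B$ satisfies the hypotheses of Theorem~\ref{t.MainParaproducts}: its first bump $\varphi_1$ inherits the zero mean from $w_I^{\varepsilon}$, and at least one of $\varphi_2,\varphi_3$ inherits a zero mean from $w_J^{\varepsilon'}$ or from the output bump $\Phi$. The $\mathrm{Para}$-norm cost per term is $n^d\rho^{-1/d}|c(m,A,\varepsilon,\varepsilon')|$; combined with the decay from the previous paragraph this produces a summable series in $(m,A,\varepsilon,\varepsilon')$ once $M$ exceeds a threshold depending on $d$. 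The availability of such $M$ is exactly why (\ref{e.CZ}) imposes many derivatives on the kernel, as the footnote in the paper anticipates.

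The main obstacle is the geometric bookkeeping required to realize each partial sum as an honest paraproduct-plus-shift in the sense of (\ref{e.paraNorm}). One must canonically assign a reference dyadic rectangle $R(I,J)$ to every admissible pair, choose the three adapted bump families $\varphi_{j,R}$ consistently across pairs, and verify that the discrepancies between $I$, $J$, and $R(I,J)$ are precisely absorbed into $\mathrm{Sh}_{\mathrm{scale},\rho}$ (which moves $b$ from scale $|I|$ down to scale $|J|$ in the coarse regime), $\mathrm{Sh}_{\mathrm{loc},n}$ (which realigns $R(I,J)$ with the true location of the pair), and $\mathrm{Sh}_{\mathrm{Sig},\epsilon}$. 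In the fine regime the natural choice of $R(I,J)$ is the \emph{smaller} of $I,J$ and the paraproduct's second bump $\varphi_2$ carries no zero mean, so that it averages $f$'s coarse-scale coefficients; the zero mean required by Theorem~\ref{t.MainParaproducts} is then borne by $\varphi_3=\Phi$. Ensuring that all of these choices yield paraproducts of the admissible model type throughout, and that the extracted coefficients $c(m,A,\varepsilon,\varepsilon')$ satisfy the decay claimed above, is the technical heart of the proof, though it involves only routine dyadic geometry.
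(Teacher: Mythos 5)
Your overall strategy resembles the paper's in several respects: both use Meyer wavelets, both exploit the Fourier support property (\ref{e.Fourier}), and in the regime where the $b$-wavelet is coarser than the $f$-wavelet your Taylor expansion of $w_I^{\varepsilon}$ against the vanishing moments of $w_J^{\varepsilon'}$ is exactly the mechanism behind the paper's inequality (\ref{e.QQ'}). But there is a genuine gap in your treatment of what you call the ``fine regime'' $m\ll 0$, i.e.\ $|I|\ll|J|$ (fine $b$-wavelet, coarse $f$-wavelet), and the gap is fatal to the proposed bookkeeping.

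You assert that a ``symmetric Taylor argument applied to $w_J^{\varepsilon'}$ yields a bump adapted to $I$ with the analogous decay'' $2^{-M|m|}$. This is false. The commutator kernel $\int K(x-y)\bigl[w_I^{\varepsilon}(y)-w_I^{\varepsilon}(x)\bigr]w_J^{\varepsilon'}(y)\,dy$ gains smallness only because the difference $w_I^{\varepsilon}(y)-w_I^{\varepsilon}(x)$ is small when $w_I^{\varepsilon}$ is smooth on the relevant scale; that requires $|I|\gtrsim|J|$. When $|I|\ll|J|$ the difference oscillates at scale $|I|^{1/d}$ and provides no cancellation: writing $\Phi_{I,J}\approx w_J(c_I)\,T_K w_I - w_I\,T_K w_J(c_I)$ near $I$, the two pieces are distinct bumps of comparable size $\sim|I|^{-1/2}|J|^{-1/2}$ with nothing to cancel. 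Attempting instead to Taylor-expand $w_J^{\varepsilon'}(y)$ about $x$ produces terms of the form $\int K(x-y)\bigl[w_I(y)-w_I(x)\bigr](y-x)^{\alpha}\,dy$, and the piece $w_I(x)\int K(x-y)(y-x)^{\alpha}\,dy$ is not even a convergent integral for $|\alpha|\ge 1$. So there is no decay in $|m|$ here, the coefficients $c(m,A)$ for $m<0$ are of size $O(1)$ uniformly in $m$, and the resulting series $\sum_{m<0}$ in your $\mathrm{Para}$-norm estimate diverges. A related structural problem is that $w_J^{\varepsilon'}$ is not adapted to $I$ in the sense of (\ref{e.adapted}) when $|J|\gg|I|$ (its spatial decay is at scale $|J|^{1/d}$, far too slow), so the fixed-$m$ partial sums do not even fit the model paraproduct template you invoke.

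The paper avoids this entirely by \emph{not} expanding $f$ in wavelets in the principal regime. Instead it introduces the father-wavelet projection $\operatorname F_{j+3}$, which sums all coarse $f$-scales at once via (\ref{e.FATHER}), and writes the principal contribution as $\operatorname T_K\circ\operatorname B_1-\operatorname B_2$ with $\operatorname B_1(b,\phi)=\sum_j \Delta\operatorname F_j b\cdot\operatorname F_{j+3}\phi$ and $\operatorname B_2(b,\phi)=\sum_j \Delta\operatorname F_j b\cdot\operatorname T_K\operatorname F_{j+3}\phi$. These are single paraproducts with $\operatorname{Para}$-norm $O(1)$ --- no decay in a scale-ratio parameter is sought, and none is available. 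Only the complementary regime (coarse $b$-wavelet, comparable or finer $f$-wavelet) is expanded term-by-term, and there the estimate (\ref{e.QQ'}) supplies the geometric decay in both scale ratio and spatial separation. You would need to replace the grouping-by-$m$ in your fine regime with the father-wavelet device (or some equivalent resummation over coarse $f$-scales) for the argument to close.
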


\begin{proof}[Proof of Theorem~\ref{t.upper}] 
The Proposition above shows that a commutator is the absolutely convergent 
sum of bounded paraproducts.  The result of Journ\'e, Theorem~\ref{t.MainParaproducts}, 
is that the tensor product of 
bounded paraproducts is bounded.  As the commutators in our Theorem 
act on a tensor product space, we see that the commutators can be written 
as an absolutely convergent sum of tensor products of bounded paraproducts.  Hence, 
the Theorem follows. 
\end{proof}

\begin{proof}
A basic fact here is that 
if $ \phi $ is adapted to a cube $ Q$, then so is $ \operatorname T_K \phi $.
 Clearly, $ \operatorname T_K \phi $ has a zero.  
This in particular shows that for a paraproduct operator $ \operatorname  B$, we have 
\begin{equation*}
\norm \operatorname T _{K} \circ \operatorname B. \textup{Para}.
+\norm  \operatorname B \circ \operatorname T _{K} . \textup{Para}.
\lesssim 
\norm \operatorname B. \textup{Para}.
\end{equation*}

As we are working with convolution operators, we could use a  classical 
Littlewood--Paley decomposition method to prove this result.  We have however 
already introduced wavelets (which are essential later in this paper) so 
we prefer that method here.  

We recall that the Meyer wavelet  $ w$ in one dimension has Fourier transform 
identically equal to zero on a neighborhood of the origin. It follows 
from the rapid decrease of the wavelet that we then have 
\begin{equation} \label{e.wPerp}
\int _{\mathbb R  } x ^{k} w (x)\; dx =0 \,, 
\qquad k>0\,. 
\end{equation}
That is, the wavelet is orthogonal to all polynomials in $ x$.  This property 
extends to the multidimensional Meyer wavelet.

Set
\begin{equation}\label{e.FatherProjection}
\operatorname F _j \eqdef  \sum _{\varepsilon \in \operatorname {Sig} _{d}} 
\sum _{\abs{ Q}\ge 2 ^{jd}} w ^{\varepsilon }_Q \otimes w^{\varepsilon }_Q
\end{equation}
be the Father wavelet projection.  And set 
\begin{equation*}
 \Delta  \operatorname F_j \eqdef 
 \operatorname F_j - \operatorname F _{j+1} 
 =\sum _{\varepsilon \in \operatorname {Sig} _{d}}
 \sum _{\abs{ Q}= 2 ^{jd}} w ^{\varepsilon }_Q \otimes w^{\varepsilon }_Q
\end{equation*}
be the projection onto the wavelets of scale $ 2 ^{j}$.  

The property (\ref{e.Fourier}) is relevant to us.  In particular, it follows 
from this that we have the Fourier transform of the product  
$ \Delta _j \operatorname  F_j b \cdot  \operatorname F_{j+3} f$ is 
localized to   $ 2 ^{-j-3}\le \abs{ \xi }\le 2 ^{-j+3}$. 

We expand the commutator in these wavelet projections.  Thus, 
\begin{align*}
[ \operatorname T _{K} , \operatorname M_b ] f
=\sum _{j,j'} 
[ \operatorname T _{K} , \operatorname M_{ \operatorname {\Delta F} _j b} ]  
 \operatorname{\Delta F} _{j'} f.
\end{align*}
The principal term arises from $ j+3< j' $, where we do not have 
any cancellation in the commutator, and we write 
\begin{align*}
\sum _{j+3<j'} 
[ \operatorname T _{K} , \operatorname M_{ \operatorname {\Delta F} _j b} ]  
 \operatorname{\Delta F} _{j'} f
&= \operatorname T _{K} \circ  \operatorname  B_1 (b,f)- \operatorname 
B_2 (b,  f)\,,
\\ 
\operatorname  B_1 (b,\phi) & \eqdef 
\sum _{j} \operatorname {\Delta F_j} b \cdot \operatorname F _{j+3} \phi\,,
\\
\operatorname B_2(b,\phi) & \eqdef 
\sum _{j} \operatorname {\Delta F_j} b \cdot  \operatorname T_K \circ \operatorname F _{j+3} \phi\,,
\end{align*}
It is important that the 
product $\operatorname {\Delta F_j} b \cdot \operatorname F _{j+3} f $ 
have no Fourier support in a neighborhood of the origin that has diameter 
proportional to $ 2 ^{-j} $. 
Certainly, $ \operatorname B_1$ is a paraproduct. 
It follows that $ \operatorname T _{K}\circ \operatorname B_1$ is as well. 
Upon inspection, 
one sees that $ \operatorname B_2$ is a paraproduct. 
It is also straightforward to verify that 
$
\norm\operatorname T _{K}\circ  \operatorname B_1 . \textup{Para}.+\norm \operatorname B_2 . \textup{Para}.
\lesssim 1
$.

In the remaining cases we expect terms which are substantially smaller. 
The principal point is this estimate.  For 
$ \varepsilon,\varepsilon '\in 
\operatorname {Sig} _{d}$, 
\begin{equation}\label{e.QQ'}
\abs{ [  \operatorname T_K, \operatorname M _{w _{Q} ^{\varepsilon } } ] 
w _{Q'} ^{\varepsilon '} (x)} 
\lesssim  
\bigl[\tfrac { \abs{ Q}} {\abs{ Q'}} \bigr] ^{1+1/2d} 
\bigl( 1+  \tfrac{ \operatorname {dist} (Q,Q')}{ \abs{ Q} ^{1/d} } 
\bigr) ^{-N} \abs{ Q} ^{-1/2}
[\chi _{Q'} ^{(2)} (x)] ^{N}\,,
\qquad 
2 ^{3d }\abs{ Q}\ge \abs{ Q'} \,. 
\end{equation}
Here, $ \chi _{Q'} ^{(2)}$ is as in (\ref{e.adapted}). 
In the language of the section on paraproducts, this shows that a large 
constant times this function 
is adapted to the cube $ Q'$.    The power $ -N$ on the term involving distance 
holds for all large $ N$; a power of $ N>d$ is required;  The power $ 1+1/2d$ 
on the ratio  $ \abs{ Q}/\abs{ Q'}$ follows from the number of derivatives 
we have on the kernel in (\ref{e.CZ}); some power larger than one is required. 

With the inequality (\ref{e.QQ'}), it is easy to verify that 
\begin{equation*}
\norm [ \operatorname M_b, \operatorname T_K] \cdot -
  \operatorname T_K \operatorname  B_1 (b, \cdot  )+ \operatorname 
B_2 (b,\cdot ) . \textup{Para}. 
\lesssim 1\,. 
\end{equation*}

\bigskip 

The proof of (\ref{e.QQ'}) is taken in two steps.  We have 
\begin{equation*}
\abs{ [  \operatorname T_K, \operatorname M _{w _{Q} ^{\varepsilon } } ] 
w _{Q'} ^{\varepsilon '} (x)} 
\lesssim 
\bigl( 1+  \tfrac{ \operatorname {dist} (Q,Q')}{ \abs{ Q} ^{1/d} } 
\bigr) ^{-N} \abs{ Q} ^{-1/2}
[\chi _{Q'} ^{(2)} (x)] ^{N}\,,
\qquad 
2 ^{3d }\abs{ Q}\ge \abs{ Q'} \,. 
\end{equation*}
That is, we do not have the term involving  $ \abs{ Q}/\abs{ Q'}$ appearing on the right. 
This estimate is easy to obtain, and we omit the details.  

The second estimate is 
\begin{equation} \label{e.qq'}
\abs{ [  \operatorname T_K, \operatorname M _{w _{Q} ^{\varepsilon } } ] 
w _{Q'} ^{\varepsilon '} (x)} 
\lesssim 
\bigl[\tfrac { \abs{ Q}} {\abs{ Q'}} \bigr] ^{1+1/d}  \abs{ Q} ^{-1/2}
[\chi _{Q'} ^{(2)} (x)] ^{N}\,,
\qquad 
2 ^{3d }\abs{ Q}\ge \abs{ Q'} \,. 
\end{equation}
That is, we have a slightly larger power of  $ \abs{ Q}/\abs{ Q'}$ than is 
claimed in (\ref{e.QQ'}).  Taking a geometric mean of these two estimates 
will prove (\ref{e.QQ'}).

To see (\ref{e.qq'}), let us assume that $ \abs{ Q}=1$, which we can do as 
a dilation of $ K$ has the same Calder\'on--Zygmund norm as $ K$. Observe that the commutator above is 
\begin{equation*}
 \int \{ w _Q ^{\varepsilon } (x) - w _Q ^{\varepsilon } (y)\} K(x-y) \cdot 
 w ^{\varepsilon '} _{Q'} (y)  \; dy\,.
\end{equation*}
Write the leading term in the integral as 
\begin{equation*}
\{ w _Q ^{\varepsilon } (x) - w _Q ^{\varepsilon } (y)\} K(x-y) 
= T (x-y)+E (x,y)\,, 
\end{equation*}
where $ T (x-y)$ is the $ d$th degree Taylor polynomial of the left hand side, and 
the error term $ E (x,y)$ satisfies $ \abs{ E (x,y)} \lesssim \abs{ x-y} ^{d+1}$. 
That we have this estimate follows from our assumptions (\ref{e.CZ}) on the 
kernel $ K$.  
The wavelet $ w ^{\varepsilon '} _{Q'}$, by choice of wavelet, is orthogonal 
to the Taylor polynomial, see (\ref{e.wPerp}).  Thus, as claimed, 
\begin{align*}
\ABs{  \int \{ w _Q ^{\varepsilon } (x) - w _Q ^{\varepsilon } (y)\} K(x-y) \cdot 
 w ^{\varepsilon '} _{Q'} (y)  \; dy } 
 &\le 
 \ABs{ \int E (x,y)  w ^{\varepsilon '} _{Q'} (y)  \; dy }
 \\
 & \lesssim   \int \abs{ x-y} ^{d+1} \abs{  w ^{\varepsilon '} _{Q'} (y)} \; dy 
\\
& \lesssim 
\bigl[\tfrac { \abs{ Q}} {\abs{ Q'}} \bigr] ^{1+1/d} 
\chi _{Q'} ^{(2)} (x)\,.
\end{align*}
\end{proof}

\subsection*{An Estimate for Riesz Transforms}

For our use at the end of the proof of the lower bound on operator norms on 
Riesz commutators, we will need a more quantitative estimate on upper bounds 
of such commutators.  This estimate is most convenient to state here. 

\begin{proposition}\label{l.quantReisz}  
For all integers $ a\ge 1$, 
consider the operator 
\begin{equation*}
\operatorname U_a (f,g) \eqdef 
\sum _{\abs{ Q'}=2 ^{a}\abs{ Q}} \sum _{\varepsilon',\varepsilon  \in \operatorname {Sig} _{d}}
\ip f, w_{Q'} ^{\varepsilon' }, \ip g, w_{Q} ^{\varepsilon },  \, [ \operatorname M _{w_{Q'}
^{\varepsilon' }}, \operatorname R _{s}] 
w _{Q} ^{\varepsilon }
\end{equation*}
where $ \operatorname R_s$ is the $ s$th Riesz transform on $ \mathbb R ^{d}$. 
We have the estimate 
\begin{equation}\label{e.quantReisz} 
\norm \operatorname U_a . \textup{Para} . 
\lesssim   2 ^{-Ma}\,.
\end{equation}
This estimate holds for all $a, M>1$, and all Riesz transforms, with implied constant 
only being a function of $ M$, and the dimension $ d$. 
\end{proposition}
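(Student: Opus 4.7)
The plan is to upgrade the pointwise estimate (\ref{e.qq'}) so that it gives arbitrarily fast decay in the scale gap $a$, and then to assemble $\operatorname U_a$ as a shifted paraproduct, following the construction in the proof of Theorem~\ref{t.Ud}. Two features of the Riesz setup go beyond what is assumed in (\ref{e.CZ}): the Riesz kernel $K_s$ is $C^\infty$ away from the origin with $|\nabla^j K_s(y)|\lesssim_j|y|^{-d-j}$ for \emph{every} $j\ge 0$, and the Meyer wavelet $w_Q^\varepsilon$ is orthogonal to every polynomial by property (\ref{e.wPerp}).

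First I would establish the following strengthening of (\ref{e.qq'}): for every integer $L\ge 1$ and every $N$,
\[
|[\operatorname M_{w_{Q'}^{\varepsilon'}},\operatorname R_s]\,w_Q^\varepsilon(x)|
\ \lesssim_{L,N}\ 2^{-aL/d}\,|Q|^{-1/2}\,(1+n)^{-N}\,[\chi_Q^{(2)}(x)]^{N},
\]
where $n\simeq \operatorname M\mathbf 1_Q(c(Q'))$ is the location parameter. The derivation parallels the proof of (\ref{e.qq'}) with the roles of the two wavelets exchanged (the small-scale wavelet is now $w_Q^\varepsilon$ and one uses \emph{its} vanishing moments): expand the integrand $\{w_{Q'}^{\varepsilon'}(x)-w_{Q'}^{\varepsilon'}(y)\}K_s(x-y)$ as a Taylor polynomial in $y$ of degree $L$ around $c(Q)$, subtract it against $w_Q^\varepsilon$ using (\ref{e.wPerp}), and bound the remainder using the $(L{+}1)$st derivatives. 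Each derivative that falls on $w_{Q'}^{\varepsilon'}$ contributes a factor $|Q'|^{-1/d}=2^{-a/d}|Q|^{-1/d}$, and the product of $L$ such factors is responsible for the $2^{-aL/d}$ decay; the $(1+n)^{-N}$ distance-decay comes from the standard Calder\'on--Zygmund cancellation for the $K_s$-derivative terms.

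With this estimate in hand, I would package $\operatorname U_a$ following the proof of Theorem~\ref{t.Ud}. Group the pairs $(Q,Q')$ with $|Q'|=2^a|Q|$ by the location parameter $n$ and by the signatures $(\varepsilon,\varepsilon')$; each such group, after a location shift $\lambda_n$ of parameter $n$ and a scale shift of parameter $\rho=2^{-a}$, assembles into a standard bilinear paraproduct as in Theorem~\ref{t.MainParaproducts}. The $\varphi_1$ slot is filled by the wavelet $w_Q^\varepsilon$ (with its required zeros, coming from $g$), the $\varphi_2$ slot by a shifted copy of $w_Q^{\varepsilon'}$ (also with zeros, coming from $f$), and the $\varphi_3$ slot by the normalized function $2^{aL/d}(1+n)^{N}|Q|^{1/2}[\operatorname M_{w_{Q'}^{\varepsilon'}},\operatorname R_s]w_Q^\varepsilon$, which is adapted to $Q$ in the sense of (\ref{e.adapted}) by the pointwise estimate above.

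Applying (\ref{e.paraNorm}) with $|\vec d|=d$, the weight $n^{d}\rho^{-1/d}=n^d\cdot 2^{a/d}$ on each group multiplies the coefficient $\lesssim 2^{-aL/d}(1+n)^{-N}$, so that
\[
\norm \operatorname U_a.\textup{Para}.
\ \lesssim\ 2^{a/d}\cdot 2^{-aL/d}\sum_{n\ge 1}n^{d}(1+n)^{-N}
\ \lesssim\ 2^{-a(L-1)/d}
\]
for $N>d+1$. Choosing $L\ge d(M+1)+1$ yields the claimed $\lesssim 2^{-aM}$ for any prescribed $M>1$. The chief obstacle is the strengthened pointwise estimate: the Leibniz expansion of the $(L{+}1)$st $y$-derivative of $\{w_{Q'}^{\varepsilon'}(x)-w_{Q'}^{\varepsilon'}(y)\}K_s(x-y)$ contains terms in which only $k<L{+}1$ of the derivatives fall on $w_{Q'}^{\varepsilon'}$ (the rest landing on $K_s$), and one must verify that these mixed terms are either absorbed into the $(1+n)^{-N}$ distance-decay or else produce sufficiently many powers of $|Q|^{1/d}/|Q'|^{1/d}=2^{-a/d}$ when combined with the $|y-c(Q)|^{L+1}$ factor from the Taylor remainder. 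Once the pointwise estimate is settled, the paraproduct bookkeeping is a direct application of the template in the proof of Theorem~\ref{t.Ud}.
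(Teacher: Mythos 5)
You follow the paper's own proof essentially line for line: the paper upgrades (\ref{e.qq'}) to an arbitrary decay factor $[\,|Q|/|Q'|\,]^m$ by exploiting the $C^\infty$ smoothness of the Riesz kernel together with the Meyer wavelet's vanishing moments (\ref{e.wPerp}), and then declares the shifted--paraproduct assembly ``routine.'' Your proposal spells out more of the Taylor/Leibniz bookkeeping for that pointwise estimate and makes explicit the $n^{|\vec d|}\rho^{-1/|\vec d|}$ weights from (\ref{e.paraNorm}) in the assembly step; the issue you flag as the chief obstacle (the mixed Leibniz terms near the diagonal) is exactly the detail the paper leaves implicit when it defers to the proof of (\ref{e.qq'}) and omits the verification.
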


The proof is a simple variant on the previous proof.  
Clearly the role of the signatures is unimportant, and we will ignore the 
sum on the signatures in the argument below.  Note that $ \operatorname U_a$ is 
a paraproduct, with zeros falling on $ f$, and  zeros falling on $ g$. 
Now, $ Q$ and $ Q'$ have different scales, which means that $ w_Q$ 
and $ w _{Q'}$ are not adapted to cubes of the same scale.  

This was exactly the problem addressed with the inequality (\ref{e.QQ'}) above. 
However, the Riesz transform has  an infinitely smooth kernel.  Therefore, 
a stronger form of (\ref{e.QQ'}) holds.  Namely, for all $ m>1$, we have 
\begin{equation*}
\abs{ [  [ \operatorname M _{w_{Q'}
^{\epsilon }}, \operatorname R _{s}]  w _{Q} ^{\varepsilon } (x)} 
\lesssim  
\bigl[\tfrac { \abs{ Q}} {\abs{ Q'}} \bigr] ^{m} 
\bigl( 1+  \tfrac{ \operatorname {dist} (Q,Q')}{ \abs{ Q'} ^{1/d} } 
\bigr) ^{-N} \abs{ Q'} ^{-1/2}
[\chi _{Q} ^{(2)} (x)] ^{N}\,,
\qquad 
\abs{ Q'}= 2 ^{a}\abs{ Q} \,. 
\end{equation*}
The implied constant depends only upon $ m$, through the growth of the constants in the 
relevant estimates of the Riesz transform kernels.  

The proof of the estimate proceeds just as the proof of (\ref{e.QQ'}), so we 
omit the details.  The derivation of the proposition from this last estimate is routine.

\section{The Lower Bound} \label{s.lower}

We turn to the converse to Corollary~\ref{c.upper}, namely the Theorem 
below, which includes half of our Main Theorem. 

\begin{theorem}\label{t.lower} 
We have the inequalities below, valid 
for all choices of $ \vec d$. 
\begin{equation*}
\norm b.\textup{BMO}. \lesssim  
\norm b. \textup{Riesz},p.\,,\, \norm b. \textup{Cone},p. 
\,, \qquad 1<p<\infty \,. 
\end{equation*}
where the two norms are defined in (\ref{e.RieszNorm}) and (\ref{e.ConeNorm}). 
For the inequality involving the cone norm, the implied constant depends upon 
the aperture of the cone. 
\end{theorem}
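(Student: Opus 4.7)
The plan is to prove Theorem~\ref{t.lower} by induction on the number of parameters $t$, establishing the Riesz and Cone bounds simultaneously in the spirit of the Extended Main Theorem. Both estimates are needed in the inductive step because, as explained in the introduction, when we freeze one coordinate of a rectangle and work in the remaining $t-1$ parameters, the operators that naturally appear acting in the frozen coordinate are no longer pure Riesz transforms but adapted to cones. Thus, the inductive hypothesis must carry along the cone version.

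\textbf{Base case.} For $t=1$, the Riesz case $\norm b.\textup{BMO}. \lesssim \norm b.\textup{Riesz},p.$ is exactly the classical theorem of Coifman--Rochberg--Weiss \cite{MR54:843}. For the Cone case, the cone transforms $\operatorname T_{C}$ have symbols whose real parts vanish on complementary half-spaces in a controlled way, and the hypotheses of Uchiyama \cite{MR0467384} and Song-Ying Li \cite{MR1373281} are satisfied by the family of rotations of our cone kernel at a fixed aperture. Their results then give $\norm b.\textup{BMO}. \lesssim \norm b.\textup{Cone},p.$ in one parameter.

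\textbf{Inductive step via Journé + testing.} Suppose the theorem holds for $t-1$ parameters. Given $b$ with $\norm b.\textup{BMO}.=1$, the strategy is the reverse direction of Journé's Lemma~\ref{l.journed-1}: produce a collection $\mathcal U$ with $t-1$ parameters whose associated BMO$_{-1}$ functional on $b$ is non-trivial. Concretely, freeze some coordinate $s$ (say $s=t$) to a fixed cube $Q_t$, so that $b$ restricted in the remaining $t-1$ variables lives in $L^2$ of the cross-section; by the inductive hypothesis, the $(t-1)$-parameter commutator norm of this restriction dominates its $(t-1)$-parameter BMO norm. We then build a test function $\varphi$ whose wavelet expansion is adapted to $\mathcal U$, apply the full $t$-parameter commutator $\operatorname C_{\vec\jmath}(b,\cdot)$ (either Riesz or Cone) to $\varphi$, and pair with an auxiliary $g$ concentrated on the same cubes. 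The diagonal piece reproduces the BMO$_{-1}$-sum $\sum \abs{\ip b,w_R^{\vec\varepsilon},}^2$ on $\mathcal U$, while the off-diagonal pieces are estimated using Theorem~\ref{t.Ud}, specifically the separated estimate \eqref{e.localized}: a wavelet separation assumption gives $A^{-100t}$ decay, which is more than enough to absorb these error terms into the principal diagonal contribution.

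\textbf{Main obstacle: bootstrapping.} The difficulty is that Journé's Lemma only gives a one-sided domination with an embeddedness penalty $\operatorname{Emb}(R)^{-C}$ in \eqref{e.d-1C}, so a single application of the lemma relates $\norm b.\textup{BMO}.$ to a weighted BMO$_{-1}$ quantity rather than to a clean commutator lower bound. This is handled by the bootstrapping argument to appear in \S\ref{s.boot}: one proves a slightly weaker a priori inequality of the form $\norm b.\textup{BMO}. \le \theta \norm b.\textup{BMO}. + C\norm b.\textup{Cone},p.$ with $\theta<1$, by exploiting \eqref{e.d-1B}, which allows the enlargement $V$ of $\operatorname{sh}(\mathcal U)$ to be chosen of measure only $(1+\eta)\abs{\operatorname{sh}(\mathcal U)}$. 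The small $\eta$ corresponds to the small factor $\theta$, and the inequality can be rearranged to eliminate the BMO term on the right. Quantitative control on the off-diagonal errors, using the refined estimates of Proposition~\ref{l.quantReisz} in the Riesz case, ensures the bootstrap closes with a universal constant depending only on $\vec d$ and the aperture of the cones.
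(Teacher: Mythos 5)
Your outline captures the paper's skeleton correctly — induction on the number of parameters, base case from Coifman--Rochberg--Weiss for Riesz and Uchiyama/Li for cones, Journ\'e's Lemma to pass from $\textup{BMO}_{-1}$ to $\textup{BMO}$, and a bootstrap with a small parameter to absorb errors. Several of the specific claims, however, do not match what is actually needed and would leave the argument incomplete.

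First, your stated rationale for carrying the Cone bound along in the induction is incorrect. It is not because ``freezing a coordinate produces cone operators in the frozen variable.'' The inductive input, Lemma~\ref{l.bmoredux}, is proved entirely by the weak factorization form of the $(t-1)$-parameter hypothesis: one writes $\psi = \psi_1 \cdot \psi'$, factors $\psi_1$ via CRW in one parameter, factors $\psi'$ via the induction in $t-1$ parameters, and dualizes. No ``freezing-and-testing'' occurs there. Cones enter for two genuinely different reasons, neither of which appears in your proposal: (i) the test function $\overline\gamma$ in the bootstrap is built as $\gamma = \operatorname T_{D_1}\cdots\operatorname T_{D_t}\operatorname P_{\mathcal U}b$ for carefully selected cones $D_s\subset C_s$ (Lemma~\ref{l.coneSelection}), and the whole point is that applying $\operatorname T_{C_s}$ to $\overline\gamma$ kills most terms of the expanded commutator because the Fourier supports sit on opposite halves of a half-space; and (ii) passing from the Cone lower bound to the Riesz lower bound requires the nontrivial Lemma~\ref{l.miracle}, which constructs a polynomial in Riesz transforms whose symbol is within $\eta$ of the indicator of a cone on $C\cup(-C)$, together with Proposition~\ref{p.polynomials}. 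Your proposal omits Lemma~\ref{l.miracle} entirely, and without it there is no route from the cone estimates back to the Riesz transform estimates; this is the pivot of the whole lower bound argument, not an optional detail.

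Second, the bootstrap is not quite the self-absorption inequality $\norm b.\textup{BMO}. \le \theta\norm b.\textup{BMO}. + C\norm b.\textup{Cone},p.$ that you describe. After normalizing $\norm b.\textup{BMO}.=1$ one may assume $\norm b.\textup{BMO}_{-1}.<\delta_{-1}$, else Lemma~\ref{l.bmoredux} already gives the conclusion. The bootstrap then tests the commutator against the single explicit function $\overline\gamma$, splits $b = \operatorname P_{\mathcal U}b + \operatorname P_{\mathcal V}b + \operatorname P_{\mathcal W}b$ using $V$ from Journ\'e's Lemma with the refined $(1+\eta)$ measure control, and establishes the three numerical estimates (\ref{e.Obvious}), (\ref{e.Journe}), (\ref{e.Paraprod}); it is the smallness of $\delta_J$ and then $\delta_{-1}$, chosen in that order, that yields the absolute lower bound $\gtrsim 1$ for $\norm [\ldots\operatorname M_b\ldots]\overline\gamma.2.$. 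There is no loop in which a term $\theta\norm b.\textup{BMO}.$ is subtracted from both sides. You do correctly identify that the separated estimate (\ref{e.localized}) of Theorem~\ref{t.Ud} and the quantitative Riesz estimate Proposition~\ref{l.quantReisz} control the $\mathcal W$ term, but they are deployed after the cone/test-function reduction, which is the part your proposal does not supply.
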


\begin{remark}\label{r.p=2}  
It is enough to prove this inequality with the $ L^2$ operator norm on the right 
hand side. If a commutator is bounded from $ L^p$ to itself, then it is also 
bounded on the conjugate space $ L^{p'}$, and so by interpolation bounded on 
$ L^2$.  That is, we have the inequality $ \norm b.\textup{Riesz},2.
\lesssim  \norm b.\textup{Riesz},p.$, valid for all $ 1<p<\infty $. 
The same inequality holds for the Cone norm. 
\end{remark}

For the rest of this paper, we will denote $\norm b.\textup{Riesz},2.$ by $\norm b.\textup{Riesz}.$ and similarly for the Cone norms.

We use induction on parameters, 
namely the number of coordinates in $ \vec d$.  The base case is $ t=1$. 
Coifman, Rochberg and Weiss \cite{MR54:843} proved that 
$ \norm b. \textup{BMO}. \lesssim \norm b. \textup{Riesz}. $. 
This is a   well known result, with a concise proof.  
We find it necessary to prove the same inequality for the cones as an 
aid to proving the result about Riesz transforms.   Indeed, 
it was this part of the proof that motivated the definition of the cone norm. 

In the case $ t=1$, we indeed have the inequality $ \norm 
b. \textup{BMO}. \lesssim \norm b. \textup{Cone}. $.  This is a consequence of 
a deep line of investigation begun by Uchiyama \cite {MR0467384}, in which 
both directions of the Coifman, Rochberg, and Weiss result were extended to 
more general Calder\'on--Zygmund operators.   In particular, a 
result of Song-Ying Li gives us as a Corollary to his Theorem, 
this essential result, which completes our 
discussion of the base case $ t=1$ in our induction on parameters.

\begin{theorem}[Li \cite {MR1373281}] \label{t.li} In the case of $ t=1$, 
for all $ d\ge1$  and symbols $ b$ on $ \mathbb R ^{d}$ we have 
\begin{equation*}
\norm b. \textup{BMO}. \lesssim \norm b. \textup{Cone}. \,. 
\end{equation*}
\end{theorem}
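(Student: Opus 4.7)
The plan is to reduce the statement to the Uchiyama--Li characterization of $\textup{BMO}(\mathbb R^d)$ by commutators with a finite family of Calder\'on--Zygmund operators whose symbols are jointly non-degenerate on the unit sphere. The cone transforms were designed precisely to fit this hypothesis, so the argument will be a matter of choosing finitely many rotated cones that together cover all directions.

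First, fix the aperture and a model cone $C_0 \subset \mathbb R^d$. Rotations preserve the Calder\'on--Zygmund norms of the kernels $K_C$, so for any finite collection of rotations $\rho_1,\dotsc,\rho_N \in SO(d)$ the kernels $K_{\rho_j C_0}$ form a uniformly bounded family of CZ operators. Choose the $\rho_j$'s so that the open cones $C_j \eqdef \rho_j C_0$ jointly cover $\mathbb R^d \setminus \{0\}$; this is possible because $C_0$ has nonempty interior in $\mathbb R^d$ (recall $d \ge 2$; the case $d=1$ is trivial since the cones are just $\mathbb R_\pm$ and the cone transforms generate the Hilbert transform together with the identity). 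By property \eqref{e.kappa}, the symbols $m_j \eqdef \widehat{K_{C_j}}$ satisfy $\mathbf 1_{C_j} \le m_j \le \mathbf 1_{(1+\kappa)C_j}$, so that $\sum_j m_j(\xi) \ge 1$ for every $\xi \ne 0$. Consequently, the family $\{m_j\}_{j=1}^N$ has no common zero on the unit sphere, which is exactly the non-degeneracy condition required in Uchiyama's framework.

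Next, apply Theorem~\ref{t.li}, the result of Song-Ying Li \cite{MR1373281}, which extends the Coifman--Rochberg--Weiss lower bound to any finite family of CZ operators satisfying the joint non-vanishing condition above. It yields
\begin{equation*}
\norm b.\textup{BMO}. \lesssim \sum_{j=1}^N \norm [\operatorname T_{C_j},\operatorname M_b].p\to p.\,,
\end{equation*}
with implicit constant depending on the $C^\infty$-regularity of the $m_j$ and the CZ-norms of the $K_{C_j}$. Each summand on the right is bounded by $\norm b. \textup{Cone},p.$ by definition \eqref{e.ConeNorm}, so we obtain $\norm b.\textup{BMO}. \lesssim N \norm b. \textup{Cone},p.$, which is the claim. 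Since $N$ and the CZ-constants depend on the aperture (shrinking the aperture forces more rotations and degrades the symbol regularity), the implicit constant likewise depends on the aperture, as stated.

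The main obstacle is simply bookkeeping: verifying that the cone operators we constructed, of fixed aperture, fall under the precise hypotheses of Li's theorem (the symbols must be sufficiently smooth away from the origin, homogeneous of degree zero, and jointly non-vanishing on $S^{d-1}$). The first two properties are built into the definition of the cone transform in \eqref{e.kappa}, and the third follows from the covering property above. Once these are checked, the proof is essentially a citation. The more substantive work was done in proving Theorem~\ref{t.li} itself; here our contribution is merely the observation that a finite rotation scheme suffices to invoke it.
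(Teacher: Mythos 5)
The paper itself offers no proof of Theorem~\ref{t.li}: it simply cites Li \cite{MR1373281} and remarks that the result follows as a corollary of Li's general theorem (which in turn extends Uchiyama's work \cite{MR0467384}). So you are not missing a proof in the paper — you are attempting to supply the details of the reduction, and the overall plan (take finitely many rotated cone transforms and invoke Li's criterion) is the right one. But there are two problems in the execution.

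The substantive gap is the non-degeneracy condition. You state it as ``the symbols have no common zero on $S^{d-1}$,'' and argue that this is secured by $\sum_j m_j(\xi)\ge 1$. That is not the Uchiyama--Li condition, and it is not sufficient: the constant symbol $m\equiv 1$ (the identity operator) never vanishes on the sphere, yet $[\operatorname I,\operatorname M_b]\equiv 0$, so a family of such operators characterizes nothing. The condition in Uchiyama's framework is a statement about $\xi$ and $-\xi$ \emph{jointly}: after adjoining the constant symbol $\theta_0\equiv1$, the $2\times(N+1)$ matrix
\begin{equation*}
\begin{pmatrix}1 & m_1(\xi) & \cdots & m_N(\xi)\\ 1 & m_1(-\xi) & \cdots & m_N(-\xi)\end{pmatrix}
\end{equation*}
must have rank $2$ for every $\xi\in S^{d-1}$; equivalently, for each $\xi$ there must exist some $j$ with $m_j(\xi)\ne m_j(-\xi)$. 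Your covering argument in fact delivers this stronger statement for free, but you should say so: for a narrow enough aperture one has $C_j\cap\bigl(-(1+\kappa)C_j\bigr)=\emptyset$, hence $m_j(\xi)=1$ while $m_j(-\xi)=0$ whenever $\xi\in C_j$, and the $C_j$ cover $S^{d-1}\setminus\{0\}$ by construction. With that change the verification is correct and the reduction goes through.

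A secondary, but confusing, issue: in the middle of the argument you ``apply Theorem~\ref{t.li},'' which is the very statement you are proving. You clearly mean to invoke Li's general theorem from \cite{MR1373281} (the characterization of $\textup{BMO}$ via commutators with a non-degenerate finite family of homogeneous Calder\'on--Zygmund operators), not its cone-transform corollary. Fix the reference, otherwise the argument reads as circular even though your intent is not.
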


In the inductive stage of the proof, we use the induction hypothesis to 
derive a lower bound on the commutator norms in terms of 
the $ \textup{BMO} _{-1} $ norm.  In so doing, it is very useful to use 
the equivalent Weak Factorization Theorem. 

We then `bootstrap' from this weaker inequality to the full inequality.  Namely, 
we can work with a symbol $ b$ with $ \textup{BMO}$ norm one, but with 
$ \textup{BMO} _{-1}$ norm small.\footnote{That is, the function 
$ b$ is of the type found in Carleson's examples \cite{carleson-example}.} 
With $ b$ fixed, we select an appropriate commutator which will admit 
a lower bound on its operator norm.  We select a test function which will 
show that the commutator has a large operator norm.  Verification of this 
fact will depend critically on the assumption that the symbol has 
small $ \textup{BMO} _{-1}$ norm, and the Journ\'e Lemma.

\subsection{The Initial  $ \textup{BMO} _{-1}$ Lower Bound}\label{ss.induct}

We assume that $ t\ge2$ and use the induction hypothesis to 
establish a lower bound on the Riesz and Cone norms of a symbol.  
This norm is in terms of our $ \textup{BMO}$ norm with $ t-1$ parameters.

\begin{lemma}\label{l.bmoredux} For $ t\ge 2$, assume Theorem~\ref{t.lower} in the case of $ t-1$ parameters.  Then we have the estimate 
\begin{equation} \label{e.t-1}
\norm b .\textup{BMO}_{-1}. \lesssim 
\norm b. \textup{Cone}. \,,\, \norm b. \textup{Riesz}.\,.
\end{equation}
where it is to be emphasized that the $ \textup{BMO}_{-1}$ norm on the left is the $ \textup{BMO}$ norm 
of $ t-1$ parameters. 
\end{lemma}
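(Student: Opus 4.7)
The plan is to use induction on parameters via slicing together with a one-dimensional weak factorization. Fix a collection $\mathcal U$ of rectangles having $t-1$ parameters; by coordinate symmetry we may assume the $t$-th coordinate of every $R\in\mathcal U$ is one fixed cube $Q^0_t$, so that $R=R'\times Q^0_t$ with $R'\in\mathcal D_{\vec d'}$ and $\vec d'\eqdef(d_1,\dots,d_{t-1})$. For each signature $\varepsilon_t\in\textup{Sig}_{d_t}$ introduce the $(t-1)$-parameter slice
\begin{equation*}
b_{\varepsilon_t}(\vec x')\eqdef\int_{\mathbb R^{d_t}} b(\vec x',x_t)\,w^{\varepsilon_t}_{Q^0_t}(x_t)\,dx_t.
\end{equation*}
Pairing $b$ against the full wavelet basis gives $\langle b_{\varepsilon_t},w^{\vec\varepsilon'}_{R'}\rangle=\langle b,w^{\vec\varepsilon'}_{R'}\otimes w^{\varepsilon_t}_{Q^0_t}\rangle$, and exploiting $|\operatorname{sh}(\mathcal U)|=|\operatorname{sh}(\mathcal U')|\cdot|Q^0_t|$ together with the inclusion $\mathcal U'\subset\{R'\subset\operatorname{sh}(\mathcal U')\}$ yields the slicing estimate
\begin{equation*}
\norm b.\textup{BMO}_{-1}.^2\lesssim \sup_{Q^0_t,\,\varepsilon_t}|Q^0_t|^{-1}\norm b_{\varepsilon_t}.\textup{BMO}(\mathbb R^{\vec d'}).^2.
\end{equation*}

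Now apply the induction hypothesis, namely the $(t-1)$-parameter case of Theorem~\ref{t.lower}, to each slice: $\norm b_{\varepsilon_t}.\textup{BMO}(\mathbb R^{\vec d'}).\lesssim\norm b_{\varepsilon_t}.\textup{Riesz}.$, and analogously for the Cone norm. By duality it suffices to bound
\begin{equation*}
\langle b_{\varepsilon_t},\Pi^{(t-1)}_{\vec\jmath'}(F,G)\rangle=\langle b,\Pi^{(t-1)}_{\vec\jmath'}(F,G)\otimes w^{\varepsilon_t}_{Q^0_t}\rangle
\end{equation*}
for all indices $\vec\jmath'=(j_1,\dots,j_{t-1})$ and all unit $F,G\in L^2(\mathbb R^{\vec d'})$. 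Since $\varepsilon_t\in\textup{Sig}_{d_t}$ excludes $\vec 1$, the wavelet $w^{\varepsilon_t}_{Q^0_t}$ has mean zero and, as an essentially atomic Schwartz function, satisfies $\norm w^{\varepsilon_t}_{Q^0_t}.H^1(\mathbb R^{d_t}).\lesssim|Q^0_t|^{1/2}$. The base case $t=1$ of the Weak Factorization Theorem on $\mathbb R^{d_t}$---Coifman--Rochberg--Weiss for Riesz operators, Theorem~\ref{t.li} (Li) for Cone operators---then provides a decomposition
\begin{equation*}
w^{\varepsilon_t}_{Q^0_t}=\sum_k\Pi^{(1)}_{j_t}(\psi_{1,k},\psi_{2,k}),\qquad \sum_k\norm\psi_{1,k}.2.\norm\psi_{2,k}.2.\lesssim|Q^0_t|^{1/2}.
\end{equation*}

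Finally, because each Riesz (or Cone) transform acts in a single variable, the bilinear $\Pi$ factors as a tensor product across coordinates: $\Pi^{(t)}_{\vec\jmath}(F\otimes\psi_1,G\otimes\psi_2)=\Pi^{(t-1)}_{\vec\jmath'}(F,G)\otimes\Pi^{(1)}_{j_t}(\psi_1,\psi_2)$. Inserting the decomposition of $w^{\varepsilon_t}_{Q^0_t}$ therefore gives
\begin{equation*}
\langle b,\Pi^{(t-1)}_{\vec\jmath'}(F,G)\otimes w^{\varepsilon_t}_{Q^0_t}\rangle=\sum_k\langle C_{\vec\jmath}(b,F\otimes\psi_{1,k}),G\otimes\psi_{2,k}\rangle.
\end{equation*}
Estimating each summand by the $t$-parameter commutator norm and summing yields $\norm b_{\varepsilon_t}.\textup{Riesz}.\lesssim|Q^0_t|^{1/2}\norm b.\textup{Riesz}.$; feeding this into the slicing bound closes the induction, and the Cone case is identical. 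The main obstacle is the quantitative one-dimensional weak factorization of the specific wavelet $w^{\varepsilon_t}_{Q^0_t}$ in $H^1(\mathbb R^{d_t})$ with $H^1$-norm $\lesssim|Q^0_t|^{1/2}$---particularly for Cone operators, where Li's deep Theorem~\ref{t.li} is indispensable. This is precisely why the Cone norm must be carried in tandem with the Riesz norm throughout the induction, rather than treated only at the end.
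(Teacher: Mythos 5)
Your proof is correct, and it is essentially the same argument as the paper's, organized through duality in the opposite order. The paper fixes the extremal collection $\mathcal U$, forms the explicit test function $\psi=\operatorname{P}_{\mathcal U}b$, observes that (after normalization) $\langle b,\psi\rangle\approx 1$, factors $\psi(x)=\psi_1(x_1)\psi'(x')$ with $\psi_1\in H^1(\mathbb R^{d_1})$ and $\psi'\in H^1$ in $t-1$ parameters, weak-factors $\psi_1$ by Coifman--Rochberg--Weiss (or Li for cones) and $\psi'$ by the inductive weak factorization, and then recombines to bound $\norm \psi. L^2\widehat\odot L^2.\lesssim 1$. You instead slice the symbol $b$ against a single wavelet $w^{\varepsilon_t}_{Q^0_t}$, apply the $(t-1)$-parameter lower bound directly to the slice $b_{\varepsilon_t}$, and then unfold the duality using the tensor factorization $\Pi^{(t)}_{\vec\jmath}(F\otimes\psi_1,G\otimes\psi_2)=\Pi^{(t-1)}_{\vec\jmath'}(F,G)\otimes\Pi^{(1)}_{j_t}(\psi_1,\psi_2)$ and the one-parameter weak factorization of $w^{\varepsilon_t}_{Q^0_t}$. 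The ingredients are identical --- slicing, the base-case weak factorization (CRW/Li), the $(t-1)$-parameter induction hypothesis, and the tensor structure of $\Pi$ --- but your version makes the slicing estimate and the tensor identity for $\Pi$ explicit, whereas the paper uses them implicitly when recombining the factorizations of $\psi_1$ and $\psi'$. One small advantage of your formulation is that it sidesteps the paper's slightly imprecise claim that $\psi$ is a single product $\psi_1\psi'$: since the wavelet coefficients depend on the signature $\varepsilon_1$ of the fixed cube, $\psi$ is in fact a finite sum over $\varepsilon_1\in\operatorname{Sig}_{d_1}$ of such products, a point your slice-by-slice argument handles transparently.
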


\begin{proof}
We only give the proof of $ \norm b .\textup{BMO}_{-1}. \lesssim 
\norm b. \textup{Riesz}. $ explicitly.  This proof  uses an 
equivalent form of the induction hypothesis, namely a 
weak factorization result on $ H^1$ in $ t-1$ parameters.  The same weak 
factorization result holds for Cone transforms.  See Li \cite {MR1373281} 
for the one parameter formulation of this result.

Using the notation of  (\ref{e.LotimesL}), 
it is a straightforward exercise in duality to demonstrate that 
\begin{equation}
\sup_{\vec{j}}\norm \operatorname C_{\vec{j}}(b,\cdot). 2\to 2. \approx \norm b . (L^2(\mathbb R^{\vec d})
\widehat\odot L^2(\mathbb R^{\vec d})) ^*. .
\end{equation}
Therefore to show (\ref{e.t-1}), it is sufficient to demonstrate that the following inequality holds,
\begin{equation}
\label{e.t-1red}
\norm b.(L^2\widehat\odot L^2)^*.\gtrsim\norm b.\textup{BMO}_{-1}.,
\end{equation}
and this will be established by relying upon the truth of the Theorem in $t-1$ parameters.

Given a smooth symbol $b(x_1,\ldots x_t)=b(x_1,x')$ of $t$ parameters, 
we assume that $\norm b.\textup{BMO}_{-1}.=1$.  As the symbol is smooth, the 
supremum in the norm is achieved by a 
collection of rectangles $\mathcal{U}$ of $\mathcal{D}_{\vec d}$ of $t-1$ 
parameters.  We can assume that the rectangles in $ \mathcal U$ agree in the 
first coordinate, to a cube $ Q\subset \mathbb R ^{d_1}$. 
As there are free dilations in each coordinate, we can 
 assume that $\abs{Q}=1$ and $\abs{\textnormal{sh}(\mathcal{U})}\approx 1$. 
Then define 
$$
\psi = \sum_{R\in\mathcal{U}}\sum_{\vec\varepsilon\in\textup{Sig}_{\vec d}}
\ip b,w_R^{\vec\varepsilon}, w_R^{\vec\varepsilon}.
$$
One notes that $\ip b,\psi,=1$.  To prove the claim, it is then enough to
demonstrate that $\norm \psi .L^2(\mathbb R^{\vec d})\widehat\odot L^2(\mathbb R^{\vec d}).\lesssim 1$.  Next observe that $\psi(x)=\psi_1(x_1)\psi'(x')$ and $\psi_1\in H^1(\mathbb R^{d_1})$ with 
$$
\norm \psi_1.H^1(\mathbb R^{d_1}). =1.
$$
For the function $\psi_1$, we use the one parameter weak factorization of 
$H^1(\mathbb R^{d_1})$ of Coifman, Rochberg and Weiss \cite{MR54:843}: 
There exists functions $f_n^j,g_n^j\in L^2(\mathbb R^{d_1})$, $n\in\mathbb N$, 
$1\leq j_1\leq d_1$, such that
$$
\psi_1=\sum_{n=1}^\infty\sum_{j_1=1}^{d_1}\Pi_{1,j_1}(f_n^{j_1},g_n^{j_1})
$$
where $\Pi_{1,j_1}(p,q):=R_{1,\, j_1}(p) q+pR_{1,\, j_1}(q)$. 
One next sees that $\psi'\in H^1(\otimes_{l=2}^{t}\mathbb R^{d_l})$ 
with norm controlled by a constant.  This follows from the 
choice of $\mathcal{U}$ and the square function characterization
of the space $H^1(\otimes_{l=1}^{t-1}\mathbb R^{d_l})$. 
By the induction hypothesis in $ t-1$ parameters, in particular that $H^1(\otimes_{l=2}^{t}
\mathbb R^{d_l})=L^2(\otimes_{s=2}^t\mathbb R^{d_s})\widehat\odot
L^2(\otimes_{s=2}^t\mathbb R^{d_s})$, we have $f_m^{\vec{j}},g_m^{\vec{j}}\in 
L^2(\otimes_{s=2}^{t}\mathbb R^{n_s})$ with $m\in\mathbb N$ and $\vec{j}$ a 
vector with $1\leq j_s\leq d_s$ for $s=2,\ldots, t$ such that
$$
\psi'=\sum_{m=1}^{\infty}\sum_{\vec{j}}\Pi_{\vec{j}}  
(f_m^{\vec{j}},g_m^{\vec{j}}),\qquad \sum_{m=1}^{\infty}\sum_{\vec{j}}
\norm f_m^{\vec{j}} .2. \norm g_m^{\vec{j}}.2. \lesssim 1.
$$

This immediately implies (\ref{e.t-1}) since $\psi=\psi_1\psi'$, and we have a weak factorization of $\psi$ with $\norm \psi .L^2(\mathbb R^{\vec d})\widehat\odot L^2(\mathbb R^{\vec d}).\lesssim 1$. 
\end{proof}

\section{The Bootstrapping Argument}\label{s.boot}

In this section, we assume that $ \norm b . \textup{BMO} _{-1}. < \delta _{-1}$, 
is very small, for a constant $ 0<\delta _{-1}<1$ to be chosen.  Under this additional 
assumption, we conclude the proof of Theorem~\ref{t.lower}.  

This proof is intricate, and indeed at this stage we find it essential to 
first prove the result for cones, namely we first prove $ \norm b. \textup{BMO}. \lesssim \norm 
b . \textup{Cone}. $.  Elements of this  proof in this case are essential to 
address the Riesz norm case.

\subsection{The Lower Bound on the Cone Norm}

This case follows the lines of the argument of Lacey and Terwilleger 
\cite{MR2176015}. (The current argument is however somewhat 
simpler.) 
We make a remark about the cone norms with different apertures.  
Given a cone $ C$ with data $ (\xi , Q)$ where $\xi  $ is a unit vector 
in $ \mathbb R ^{d}$ and $ Q$ is a cube, consider a second cone $ C'$ with  
data $ (\xi', C')$.  We can map one cone into the other with an orthogonal 
rotation and a dilation in $ d-1$ variables.  Thus, the corresponding Calder\'on--Zygmund kernels  
$ K_C$ and $ K _{C'}$ can be mapped one into the other by way of these same transformations. 

A rotation preserves the Calder\'on--Zygmund norm of the kernel, but the dilation 
does not 
since it is not uniform in all coordinates.  Nevertheless, this 
observation shows that the Cone norms associated to distinct apertures are 
comparable.  Thus, to prove our result, it suffices to demonstrate the 
existence of \emph{some} aperture for which the Theorem is true.  This we will do 
by taking a somewhat large aperture, that approximates a half space.  

For a choice of symbol $ b$ with $ \norm b. \textup{BMO}.=1$ 
and $ \norm b.\textup{BMO} _{-1}.<\delta _{-1}$, there is an associated open set $U$ for which we achieve the supremum in the
$\textup{BMO}$ norm.   
After an appropriate dilation, we can assume
$\frac{1}{2}<\abs{\textup{sh}(\mathcal U)}\leq 1$. 
Let $ \mathcal U = \{R\in \mathcal D _{\vec d} \mid R\subset U\}$. 

For a collection of rectangles $\mathcal T$ define the wavelet projection onto $ \mathcal T$ 
as 
\begin{equation*}
\operatorname P_{\mathcal T}b\eqdef\sum_{R\in\mathcal T}
\sum_{\vec{\varepsilon}\in\textup{Sig}_{\vec d}}\ip b,w_R^{\vec\varepsilon} ,w_R^{\vec\varepsilon}.
\end{equation*}
Define $ \beta = \operatorname P _{\mathcal U}b$. 
We use this function to build a  test function to demonstrate a lower bound on the Cone norm.

The purpose of the next steps is to select the cones we will use.  
This issue involves some subtleties motivated by subsequent steps in the proof. 
Of particular importance is that the selection of the cones be dependent only on 
dimension, as well as satisfy some particular estimates.  It turns out to be 
a useful device to select two distinct cones: One will be used for the 
selection of the test function for the commutator, and the other for the cones we 
use to define the commutator.

Given a cone $ C$ with data $ (\xi ,Q)$, let $ \operatorname H _{C} $ be the 
convolution operator with symbol $ \mathbf 1 _{(0,\infty )} (\xi \cdot \theta )$, 
which is to say that $ \operatorname H_C$ is  the Fourier projection onto 
a half space associated with $ C$.

\begin{lemma}\label{l.coneSelection} 
Given $ \vec d$ and $ \kappa >0$ we can select cones 
\begin{equation*}
D_s\subset C_s \subset \mathbb R ^{d_s} \,, \qquad  1\le s \le t\,. 
\end{equation*}
These cones have data $ (\xi _s, Q_s)$ and $ (\xi _s, Q'_s)$ respectively. 
They are, up to a rotation, only a function of $ \vec d$ and $ \kappa >0$, 
and they satisfy these properties.  Defining 
\begin{equation}\label{e.gammaDefined}
\gamma \eqdef \operatorname T _{D_1} \cdots \operatorname T _{D_t} \beta 
\end{equation}
we have 
\begin{align} \label{e.GAMMA}
\norm \gamma .2. &\ge 4 ^{-t}\,. 
\\  \label{e.1stDiff}
\norm \operatorname H _{D_1}\cdots \operatorname H _{D_t} \beta 
- \gamma  .4. 
 &\le  \kappa  .
\\ \label{e.2ndDiff}
\norm (\operatorname H _{C_1}\cdots \operatorname H _{C_t} 
- \operatorname P _{C_1}\cdots \operatorname P _{C_t})\abs{ \gamma } ^2  .2. 
 &\le \kappa  .
\end{align}
Notice that (\ref{e.1stDiff}) estimates an $ L ^{4}$ norm; 
and that (\ref{e.2ndDiff}) concerns the function $ \abs{ \gamma } ^2 $, and we 
are estimating the difference between the projections onto the half spaces defined 
by the cones, and the projection onto the \emph{larger} cones. 
\end{lemma}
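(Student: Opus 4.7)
The plan is to build the cones in two stages: first fix the directions $\xi_s$ by an averaging argument, then choose apertures so that $D_s$ approximates the half-space $\mathbb H_s=\{\theta :\theta \cdot \xi_s>0\}$ while $Q_s'$ is substantially wider than $Q_s$. For the directions, let each $\xi_s\in S^{d_s-1}$ be independent uniform random. A Fubini computation, using $\Pr_{\xi_s}(\xi_s\cdot\theta_s>0)=\tfrac12$ for every fixed $\theta_s\neq 0$, gives
\begin{equation*}
\mathbb E\Norm \operatorname H_{D_1}\dotsm \operatorname H_{D_t}\beta .2.^{2} = 2^{-t}\norm\beta .2.^{2}\ge 2^{-t-1},
\end{equation*}
using $\norm\beta .2.^{2}=\abs{\operatorname{sh}(\mathcal U)}\ge\tfrac12$. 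I fix deterministic $\xi_s$ achieving at least the mean. These $\xi_s$ depend on $\beta$ only through an orthogonal rotation, meeting the ``up to a rotation'' requirement.

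\textbf{Choosing apertures; proof of (\ref{e.2ndDiff}).} Fix the $\xi_s$ and take $D_s\subset C_s$ of common direction $\xi_s$, with apertures $Q_s\subset Q_s'$ both large but $Q_s'$ substantially wider than $Q_s$. By the telescoping identity
\begin{equation*}
\operatorname H_{C_1}\dotsm\operatorname H_{C_t}-\operatorname P_{C_1}\dotsm\operatorname P_{C_t} = \sum_{s=1}^{t}\operatorname H_{C_1}\dotsm\operatorname H_{C_{s-1}}(\operatorname H_{C_s}-\operatorname P_{C_s})\operatorname P_{C_{s+1}}\dotsm\operatorname P_{C_t},
\end{equation*}
and Plancherel, (\ref{e.2ndDiff}) reduces to the coordinate-wise estimate $\Norm \mathbf 1_{\mathbb H_s\setminus C_s}(\theta_s)\cdot\widehat{\abs{\gamma}^{2}}.2. \le \kappa/t$. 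Since $\widehat\gamma$ is supported in $(1+\kappa)D_s$ in the $s$th coordinate, the convolution $\widehat{\abs{\gamma}^{2}} = \widehat\gamma\ast_{s}\widehat{\bar\gamma}$ (in the $s$th variable) has its $L^2$ mass concentrated where $(1+\kappa)D_s\cap(\theta_s+(1+\kappa)D_s)$ is substantial; a direct cone-geometry computation shows this intersection becomes negligible once $\theta_s\in\mathbb H_s$ lies outside a cone whose aperture is a fixed multiple of $Q_s$. Choosing $Q_s'$ larger than this multiple (a function of $\vec d$ and $\kappa$) delivers (\ref{e.2ndDiff}).

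\textbf{Proofs of (\ref{e.GAMMA}) and (\ref{e.1stDiff}).} By Plancherel,
\begin{equation*}
\norm \operatorname H_{D_s}f-\operatorname T_{D_s}f.2. \le \Norm \widehat f\cdot\mathbf 1_{\mathbb H_s\,\Delta\,(1+\kappa)D_s}.2.+O(\kappa)\norm f.2.,
\end{equation*}
which is made arbitrarily small for $f$ in any fixed bounded $L^2$ family (including $\beta$ and its iterates under any subset of the $\operatorname H_{D_s}$, $\operatorname T_{D_s}$) by taking $Q_s$ large enough, since $\widehat\beta\in L^2$ carries no singular mass on the hyperplane $\xi_s^{\perp}$. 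Iterating across $s$ yields $\norm \operatorname H_{D_1}\dotsm\operatorname H_{D_t}\beta - \gamma.2.$ as small as desired, and combined with Step 1 this gives $\norm\gamma .2.\ge 2^{-(t+1)/2}-o(1)\ge 4^{-t}$, namely (\ref{e.GAMMA}). For the $L^4$ bound (\ref{e.1stDiff}) I would approximate $\beta$ in $L^4$ by a bandlimited $\beta'$ whose Fourier transform has compact support strictly inside $\prod_{s}(1+\kappa)D_s$; then $(\operatorname H_{D_1}\dotsm\operatorname H_{D_t}-\operatorname T_{D_1}\dotsm\operatorname T_{D_t})\beta'\equiv 0$ by Fourier localization, and the residual $\beta-\beta'$ is controlled in $L^4$ using $\norm\beta .4.\lesssim 1$ (from John--Nirenberg, since $\beta$ is $\textup{BMO}$-bounded and essentially spatially supported on the bounded-measure set $U$) together with a quantitative bandlimited approximation rate.

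\textbf{Main obstacle.} The delicate step is (\ref{e.1stDiff}): the operators $\operatorname T_{D_s}-\operatorname H_{D_s}$ have $\Theta(1)$ $L^4\to L^4$ norm regardless of aperture, so Plancherel smallness does not transfer to $L^4$ by interpolation. The bandlimited-approximation strategy moves the smallness burden from the multiplier onto the $L^4$ approximation error of $\beta$, feasible thanks to the spatial localization of $\beta$ on $U$ and John--Nirenberg; executing this cleanly for the product of $t$ such operators is the technical heart of the lemma.
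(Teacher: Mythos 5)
Your construction has a genuine gap that affects the entire lemma: the apertures $Q_s$ in your argument depend on the particular function $\beta$. You choose $\xi_s$ by averaging over directions (which only controls the half--space projections), and then propose to take $Q_s$ ``large enough'' so that $\|\operatorname H_{D_s}\beta-\operatorname T_{D_s}\beta\|_2$ is small, justified by the fact that $\widehat\beta\in L^2$ puts no singular mass on the hyperplane $\xi_s^\perp$. That reasoning is sound qualitatively, but the rate at which the mass near $\xi_s^\perp$ vanishes is $\beta$-dependent, so the resulting cones are not ``up to a rotation, only a function of $\vec d$ and $\kappa$.'' The paper is emphatic about this requirement (it controls the Calder\'on--Zygmund constants of the cone kernels appearing later in the bootstrap), and its proof is built to respect it: the apertures of the $D_s$ are fixed \emph{first}, solely as a function of a small $\eta$ (essentially $\kappa$), so that $D_s$ covers at least $\tfrac12-\eta$ of the sphere; then one averages over \emph{rotations} of these fixed-aperture cones and selects a rotation by a Chebyshev/union bound so that $\|\operatorname P_{D_1}\cdots\operatorname P_{D_t}\beta\|_2\ge 4^{-t}$ and $\|(\operatorname H_{D_1}\cdots-\operatorname P_{D_1}\cdots)\beta\|_2$ is small \emph{simultaneously}. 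Your probabilistic step only fixes the directions to hit the mean of a single random variable; it does not give the simultaneous smallness, and your subsequent fix of the aperture to regain it breaks uniformity.

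Two further points. For (\ref{e.1stDiff}), you discard interpolation too quickly: the paper does not interpolate operator norms but the function $(\operatorname H_{D_1}\cdots-\operatorname T_{D_1}\cdots)\beta$ itself, which has small $L^2$ norm (from the selection step) and an $O(1)$ $L^8$ norm (from John--Nirenberg, as you note, together with the $L^8$-boundedness of the cone transforms); log-convexity of $L^p$ norms then gives a small $L^4$ norm directly, without bandlimited approximation. For (\ref{e.2ndDiff}), your ``direct cone-geometry computation'' is genuinely delicate: the set $(1+\kappa)D_s\cap(\theta_s+(1+\kappa)D_s)$ is unbounded, so smallness of $\widehat{|\gamma|^2}$ on $\mathbb H_s\setminus C_s$ does not follow from measure considerations alone, and the estimate must in any case be uniform over all admissible $\beta$. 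The paper avoids this by a compactness argument: assuming (\ref{e.2ndDiff}) fails along a sequence of wider and wider cones, the uniform $L^8$ bound forces $L^2$-precompactness of the corresponding $\beta_k$, and in the limit the multiplier $\mathbf 1_{\mathbb H}-\mathbf 1_{C}$ collapses onto a codimension-$t$ subspace, a contradiction. If you wish to keep a constructive approach to (\ref{e.2ndDiff}) you must supply the uniform-in-$\beta$ estimate, which is exactly what the compactness step buys for free.
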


\begin{proof} We begin with the selection of the cones $ D_s$, which 
is a   randomized procedure.
Fix a small constant $ 0<\eta <\tfrac 1 {10}$. 
In each dimension $ \mathbb R ^{d_s}$, fix an aperture 
$ Q_s$ so that the cone $ D_s$ with this aperture satisfies 
\begin{equation*}
\mathbb P ( D_s \cap S ^{d_s-1} \;|\; S ^{d_s-1})\ge \tfrac 12 -\eta \,. 
\end{equation*}
Here, $ S ^{d_s-1}$ denotes the sphere in $ \mathbb R ^{d_s}$ endowed with the 
canonical normalized surface measure.  The notation above is the  standard 
way to denote conditional 
probability.

Now, let $ D_s'$ denote a random rotation of the cone $ D_s$.  Taking 
expectations of $ L ^{2}$ norms below, we have access to the Plancherel 
identity to see that 
\begin{align*}
\mathbb E 
\norm \operatorname P _{D_1'} \cdots \operatorname P _{D_t'}\beta .2. ^2 
=
c _{\vec d} \,\mathbb E \int _{D_1' \otimes \cdots \otimes D _{t} '} 
\abs{  \widehat \beta (\vec \xi )  } ^2 \; d \vec \xi 
\ge (\tfrac12 - \eta ) ^{t}\,. 
\end{align*}
But also, we must have 
\begin{equation*}
\mathbb E 
\norm (\operatorname H _{D_1'} \cdots \operatorname H _{D_t'}-
\operatorname P _{D_1'} \cdots \operatorname P _{D_t'})\beta .2. ^2 
\le \eta ^{t}\,. 
\end{equation*}
View these statements about the $ L ^{2}$ norm of non-negative random variables. 
As concerns the first inequality, note that 
\begin{align*}
\sup _{D'_1 ,\dotsc, D'_t} 
\norm \operatorname P _{D_1'} \cdots \operatorname P _{D_t'}\beta .2. ^2 
&\le 1\,,
\end{align*}
Hence, we see that 
\begin{align*}
\mathbb P ( 
\norm \operatorname P _{D_1'} \cdots \operatorname P _{D_t'}\beta .2. ^2 \ge 
(\tfrac 14 ) ^{t}
) &\ge (\tfrac12 - \eta ) ^{t}\,, 
\\
\mathbb P (\norm (\operatorname H _{D_1'} \cdots \operatorname H _{D_t'}-
\operatorname P _{D_1'} \cdots \operatorname P _{D_t'})\beta .2. ^2  
\ge \eta ^{t/2}) &\le \eta ^{t/2}\,. 
\end{align*}

Therefore, for $ \eta $ sufficiently small, 
we can select cones $D'_1 ,\dotsc, D'_t $ so that 
\begin{align*}
\norm \operatorname P _{D_1'} \cdots \operatorname P _{D_t'}\beta .2. ^2 
&\ge (\tfrac14) ^{t} 
\\
\norm (\operatorname H _{D_1'} \cdots \operatorname H _{D_t'}-
\operatorname P _{D_1'} \cdots \operatorname P _{D_t'})\beta .2. ^2 
&\le \eta ^{t/2}\,. 
\end{align*}
On the other hand, we automatically have 
\begin{equation*}
\norm (\operatorname H _{D_1'} \cdots \operatorname H _{D_t'}-
\operatorname T _{D_1'} \cdots \operatorname T _{D_t'})\beta .8. 
\le C \norm \beta .8.
\le K
\end{equation*}
for an absolute constant $ K$.  
Keeping in mind the fact that the symbol of a cone operator $ \operatorname T _{C}$ is identically one 
on the cone $ C$, we see that we have proved (\ref{e.GAMMA}) and (\ref{e.1stDiff}). 

\medskip 

We  use proof by contradiction to find the cones $ C_s$. 
Fix the cones $ D_s$ as above, and let us suppose that (\ref{e.2ndDiff}) fails for some $ \kappa >0$.  
Then, we can find a sequence of   cones 
\begin{equation*}
C _{s} ^{k} \subsetneq C _{s} ^{k+1}\,, \qquad 1\le s \le t\,, \ k\ge 1
\end{equation*}
with data $ (\xi _k, Q' _{s,k})$, where the apertures $ Q _{s,k} '$
 increase to all of $ \mathbb R ^{d_s-1}$.  

We can also find   functions $ \beta _{k}$  satisfying 
\begin{gather*}
\norm \beta _k .2. =1\,, \qquad \norm \beta _k.8. \le K_8\,,
\\
\norm (\operatorname H _{C_1 ^{k}}\cdots \operatorname H _{C_t ^{k}} 
- \operatorname P _{C_1 ^{k}}\cdots \operatorname P _{C_t ^{k}})\abs{ \gamma_k } ^2  .2. \ge \kappa \,. 
\end{gather*}
where $ \gamma _k$ is defined as in (\ref{e.gammaDefined}).  
The constant $ K_8$ depends only on $ \vec d$, and the John Nirenberg inequality for 
$ \textup{BMO}$.

In particular, as we can assume an upper bound on the $ L ^{8} $ 
norm of the $ \beta _k$, the sequence of functions $ \{\beta _k\}$ are precompact in the 
$ L ^{2}$ topology.  Letting $ \beta  _{\infty }$ be a limit point of the sequence of functions, 
and defining $ \gamma _{\infty  }$ as in (\ref{e.gammaDefined}), 
we see that  for all large $ k$, 
\begin{equation*}
\norm (\operatorname H _{C_1 ^{k}}\cdots \operatorname H _{C_t ^{k}} 
- \operatorname P _{C_1 ^{k}}\cdots \operatorname P _{C_t ^{k}})\abs{ \gamma_\infty  } ^2  .2. \ge \kappa \,. 
\end{equation*}
But this is an absurdity, as in the limit, the symbol of this difference is supported on 
a subspace of codimension $ t$.  Therefore, (\ref{e.2ndDiff}) holds.  

\end{proof}

The cones we form the commutator of are the $ C_s$ of the previous Lemma. We will test 
the commutator against the function $ \overline \gamma $, where $ \gamma $ is  as in
(\ref{e.gammaDefined}).

By Journ\'e's Lemma, in particular Lemma~\ref{l.journed-1},
there will exist an open set $V$ which satisfies the conditions of that Lemma.  Set 
\begin{equation}\label{e.V}
\mathcal{V}\eqdef\{R:R\subset V,\ R\not\subset\textup{sh}(\mathcal{U})\}.
\end{equation} 
Finally, let $\mathcal W\eqdef \mathcal{D}_{\vec d}-\mathcal{U}-\mathcal{V}$.  
  
The function  $ \gamma $   enjoys these properties of Lemma~\ref{l.coneSelection}, 
as well as the ones below, which will 
conclude the proof. 
\begin{align}
\label{e.Obvious} 
\norm [ \operatorname T _{C_1}, \cdots [ \operatorname T_{C_t}, \operatorname M _{ P_{\mathcal U}b}
] \cdots ]\overline\gamma.2. & \gtrsim 1,
\\
\label{e.Journe} 
\norm [ \operatorname T _{C_1}, \cdots [  \operatorname T_{C_t},\operatorname M _{ P_{\mathcal V}b}
] \cdots ]\overline\gamma .2. &\lesssim \delta_J^{1/4},
\\
\label{e.Paraprod} 
\norm [ \operatorname T _{C_1}, \cdots [  \operatorname T_{C_t},\operatorname M _{ P_{\mathcal W}b}
] \cdots ]\overline\gamma.2.&\leq K_J\delta_{-1}.
\end{align}
Here $0< \delta _J<1$ is the constant associated with Journ\'e's Lemma (called $\eta$ in Lemma \ref{l.journed-1}), 
that is to be specified.  $ K_J$ is a function of $ \delta _J$. 
These estimates will lead to an absolute lower bound and prove Theorem~\ref{t.lower}.
Namely, the implied constants in each of the inequalities depend only on 
$ \vec d$, while $ \delta _{J}$ and $ \delta _{-1}$ are free to choose.  Certainly we 
can choose $ \delta _{J}$ first, and then with $ K_J$ specified in (\ref{e.Paraprod}), 
select $ \delta _{-1}$ to prove our Theorem.  

Estimate (\ref{e.Journe}) is  straightforward.  It is easy to see that
$$
\norm [ \operatorname T _{C_1}, \cdots [T_{C_t},\operatorname M _{ P_{\mathcal V}b}
] \cdots ]\overline\gamma .2.
\lesssim\norm\operatorname P_{\mathcal V}b.4.\norm \gamma  .4.
\lesssim \norm\operatorname P_{\mathcal V}b.4.\,,
$$
where the implied constant depends upon the the  $ L^4$ 
norms of the Cone transforms. 
But, by Journ\'e's Lemma \ref{l.journed-1} and construction, we have that 
$$
\norm\operatorname P_{\mathcal V}b.2.\leq\delta_J^{1/2}\, ,
\qquad\norm\operatorname P_{\mathcal V}b.\textup{BMO}.\leq 1\,,
$$
which implies that
$$
\norm\operatorname P_{\mathcal V}b.4.\leq\delta_J^{1/4}.
$$
These together give (\ref{e.Journe}).

We turn to the verification of (\ref{e.Obvious}) and provide a lower 
bound for the $ L^2$ norm below
\begin{equation*}
\norm [\operatorname T_{C_1}, \cdots  [\operatorname T_{C_t}, \operatorname M_\beta ]
\cdots ] \overline\gamma .2. .
\end{equation*}
Recall that $ \beta = \operatorname P _{\mathcal U} b$ and the definition of $ \gamma $ 
in (\ref{e.gammaDefined}). 
The commutator is a linear combination of terms  
$
 \operatorname T [\beta \cdot \operatorname T' \overline\gamma ] \,,  
$
where $ \operatorname T$ and $ \operatorname T'$ are either 
the identity, or a product  in $ T_ {C_s}$, 
$ 1\le s\le t$. (Each $\operatorname  T_{C_s} $  must occur in either 
$ \operatorname T$ 
or  $ \operatorname T'$.)
In each case that $ \operatorname T' $ is a non-trivial 
product, we have  $ \operatorname T'\overline \gamma =0$.  
It then remains to consider the only term 
not of this type, namely 
\begin{equation*}
 \operatorname T_{C_1} \cdots \operatorname T_{C_t}
[ \beta \cdot \overline\gamma  ] \,.
\end{equation*}

Write $ \beta = \gamma + \beta '+\beta ''$, where the smaller 
cones from Lemma~\ref{l.coneSelection} enter again below. 
\begin{equation}\label{e.beta'} 
\begin{split}
\beta ' &= 
(\operatorname H _{C_1}\cdots \operatorname H _{C_t} 
- \operatorname T _{D_1}\cdots \operatorname T _{D_t}) \beta  \,,
\\
\beta ''&= (\operatorname I - \operatorname H _{C_1}\cdots \operatorname H _{C_t} ) \beta  \,.
\end{split}
\end{equation}
Note that (\ref{e.GAMMA}) provides information about $ \beta '$.  
We need to consider $ \operatorname T_{C_1} \cdots \operatorname T_{C_t} 
[\beta \cdot \overline \gamma  ]$, which is now divided  into three terms. 
They are 
\begin{equation*}
\begin{split}
\operatorname T_{C_1} \cdots \operatorname T_{C_t} 
[\beta \cdot \overline\gamma  ] 
&= 
\operatorname T_{C_1} \cdots \operatorname T_{C_t} 
[\beta '' \cdot \overline\gamma  ]
+ 
\operatorname T_{C_1} \cdots \operatorname T_{C_t} 
[\beta '\cdot \overline{\gamma }  ]
\\
& \qquad +\operatorname T_{C_1} \cdots \operatorname T_{C_t} 
[\gamma  \cdot \overline\gamma   ] \,. 
\end{split}
\end{equation*}

Now, $\overline \gamma $ and $ \beta ''$ are supported on the same product of halfspaces, 
which are complementary to the cones, thus
\begin{equation}\label{e.beta''<}
\operatorname T_{C_1} \cdots \operatorname T_{C_t} 
[\beta '' \cdot \overline\gamma  ]=0\,.
\end{equation}
For $ \beta '$ we do not attempt to find any cancellation, just relying on the 
favorable estimate from (\ref{e.1stDiff}). 
\begin{equation} \label{e.beta'<}
\norm 
\operatorname T_{C_1} \cdots \operatorname T_{C_t} 
[  \beta' \cdot \overline{\gamma }  ] .2. 
\le \norm  \beta' .4. \cdot \norm \gamma  .4. \lesssim \kappa \,.  
\end{equation}
The last term holds the essence of this component of the argument. 
By (\ref{e.2ndDiff}), 
\begin{equation} \label{e.PCCC} 
\begin{split}
\norm \operatorname T_{C_1} \cdots \operatorname T_{C_t} 
[ \gamma  \cdot \overline\gamma  ] .2.-\kappa 
&
\ge \norm 
\operatorname H _{C_1} \cdots \operatorname H _{C_t} 
[\overline {\gamma } \cdot \gamma  ] .2.
\\ 
& \gtrsim 
\norm  \overline {\gamma } \cdot \gamma .2.  
\\
& = \norm \gamma .4.^2 
\\
& \gtrsim  \NOrm \Bigl[ \sum _{ \vec \varepsilon \in \operatorname {Sig} _{\vec d} } 
\sum_{ R\in \mathcal U}
\frac {\abs{ \ip \gamma , w _{R} ^{\vec \varepsilon } ,} ^2 } {\abs{ R}} \mathbf 1 _{R} 
\Bigr] ^{1/2} 
.4. ^2  
\\
& \gtrsim  \NOrm \Bigl[ \sum _{ \vec \varepsilon \in \operatorname {Sig} _{\vec d} } 
\sum_{ R\in \mathcal U}
\frac {\abs{ \ip \gamma , w _{R} ^{\vec \varepsilon } ,} ^2 } {\abs{ R}} \mathbf 1 _{R} 
\Bigr] ^{1/2} 
.2. ^2 
\\
& \gtrsim 1\,. 
\end{split}
\end{equation}  
The second line follows as the Fourier transform of $ \overline {\gamma } \cdot \gamma $ 
is symmetric with respect to the half planes determined by the cones; 
the third line is obvious; the fourth line uses the Littlewood--Paley inequalities; 
and the fifth line uses the fact that the rectangles in $ \mathcal U$ are in a set of 
measure at most one. 
This completes the proof of (\ref{e.Obvious}).

The remainder of the paper is devoted to 
proving (\ref{e.Paraprod}) which is taken up in the next section.

\subsection*{The Proof of (\ref{e.Paraprod}) }\label{ss.paraproderrorterms}

Theorem~\ref{t.Ud} will let us conclude estimate (\ref{e.Paraprod}),
and the proof of Theorem \ref{t.lower}.  

We have observed that the commutator we are considering simplifies considerably 
when applied to the function $\overline \gamma $.  Letting $ \operatorname T 
= \operatorname T _{C_1} \cdots \operatorname T _{C_t}$, 
and $ \operatorname T' =  \operatorname T _{D_1} \cdots \operatorname T _{D_t}$
the estimate  we are to prove is 
\begin{equation*}
\norm \operatorname T  (\operatorname P _{\mathcal W} b \cdot \overline\gamma ).2. 
=\norm \operatorname T  (\operatorname P _{\mathcal W} b \cdot \overline {\operatorname T' \beta }).2. 
\lesssim K_J \delta _{-1} \,. 
\end{equation*}
It is critical to observe that the outermost $ \operatorname T$ imposes 
a cancellation condition similar to the one defining paraproducts.  
For $ R\in \mathcal U$ and $ R'\in \mathcal W$, and choices of signatures 
$ \vec\varepsilon ,\vec\varepsilon' \in \operatorname {Sig} _{\vec d}$, 
we have 
\begin{equation*}
\operatorname T ( w ^{\vec\varepsilon'}_{R'} \cdot \overline{ 
\operatorname T' w ^{\vec\varepsilon }_R } )=0
\qquad \textup{if  for  any $ 1\le s\le t$, $ \abs{ Q'_s }>64 \abs{ Q_s}$. } 
\end{equation*}
Recall that we defined $ \gamma =  {\operatorname T' \beta }$. 
$ \operatorname T$  and $ \operatorname T'$ are a convolution operators, so the Fourier support 
of $ \operatorname T w ^{\vec\varepsilon  } _{R}$ is contained in the Fourier support 
of $ w ^{\vec\varepsilon  } _{R}$.  Therefore, this property follows immediately from the 
properties of the Meyer wavelet.  

Using this observation, we see the estimate to be proved is 
\begin{align} \label{e.T<-1}
\NOrm 
\sum_{\vec\varepsilon,\vec\epsilon\in\textup{Sig}_{\vec d}}\sum _{(R,R')\in \mathcal A}
\overline{\ip b, w_R^{\vec\varepsilon},}\, 
{\ip b ,w_{R'}^{\vec\varepsilon'}, }\, \operatorname T 
( w_{R'}^{\vec\varepsilon}  \cdot \overline{ 
\operatorname T' w ^{\vec\varepsilon }_R })
.2. \lesssim 
\delta _{-1}\,, 
\\ \nonumber
\mathcal A \eqdef \{ (R,R')\mid 
R\subset U\,,\ R'\not\subset V\,, \abs{ Q_s'}\le 64 \abs{ Q_s}\,, 1\le s\le t\}\,.
\end{align}
Notice that in the $ L^2$ norm, we are free to remove the operator $ \operatorname T$, 
as it is a bounded operator on $ L^2 (\mathbb R ^{\vec d})$.

It is essential to observe that this last sum can be written as 
 a sum  of 
paraproducts, as in  Theorem~\ref{t.Ud}. 
The purpose of these next  definitions is to decompose the 
  collection $ \mathcal A$ into appropriate
parts to which Theorem~\ref{t.Ud} applies. 
For an integer $ n\ge 1$, take 
\begin{gather*}
\mathcal U _n \eqdef  \{R\in \mathcal U \mid 2 ^{n-1}\le 
\textup{Emb} (R)\le 2 ^{n} \}\,,
\\
\mathcal A _n\eqdef \{ (R,R')\mid 
R \in \mathcal U _{n}\,,\ R'\not\subset V\,, \abs{ Q_s'}\le 64 \abs{ Q_s}\,, 1\le s\le t\}\,.
\end{gather*}
Here, $ \textup{Emb}$ is the function supplied to us by Journ\'e's Lemma, 
Lemma~\ref{l.journed-1}. 
Hence, as a consequence we have 
\begin{equation*}
\norm \operatorname P _{\mathcal U_n} b . \textup{BMO}. \lesssim 2 ^{C n} \delta _{-1}\,,
\end{equation*}
where $ C$ is a large constant depending only on $ \vec d$.  Observe that 
for $ (R,R')\in \mathcal A_n$ we necessarily have $ 2 ^{n-1} R\cap R'=\emptyset$. 
In particular, the assumption (\ref{e.Localized}) will hold with $ A \simeq 2 ^{n}$. 
From this separation, and the rapid decay of the Meyer wavelet, we will gain 
an arbitrarily large power of $ 2 ^{-n}$.  Thus the presence of the term 
$ 2 ^{Cn}$ in this last estimate turns out not to be a concern for us.  

Our estimate below is a consequence of (\ref{e.localized}), after a further 
decomposition of the sum to account for the role of the location of the zeros, 
controlled by the set $ J$ in (\ref{e.localized}). 
\begin{equation}\label{e.claim1}
\NOrm 
\sum_{\vec\varepsilon,\vec\varepsilon'\in\textup{Sig}_{\vec d}}\sum _{(R,R')\in \mathcal A_n}
\overline{\ip b, w_R^{\vec\varepsilon},}\, 
{\ip b ,w_{R'}^{\vec\varepsilon'}, }\,  
( w_{R'}^{\vec\varepsilon}  \cdot \overline{ 
\operatorname T' w ^{\vec\varepsilon }_R }).2. \lesssim   2 ^{-n}\delta _{-1}\,, 
\qquad n\ge 1\,. 
\end{equation}
Summation in $ n\ge 1$ will then prove (\ref{e.T<-1}).

Specifically, let  $ J\subset \{1 ,\dotsc, t \}$, and let  integer $ \vec k \in \mathbb Z ^{t}$ 
satisfy 
\begin{equation*}
k_s=8\,, \ s\not\in J\,, \qquad -8\le k_s\le 8\,,\ s\in J\,.
\end{equation*}
Let $ \mathcal A _{n,J,\vec k}$ be a subset of $ \mathcal A _{n}$ given by 
\begin{equation*}
\mathcal A _{n,J,\vec k} \eqdef 
\{(R,R')\in \mathcal A_n \mid  \abs{ Q'_s}\le 2 ^{-8} \abs{ Q_s}\,,\ s\not\in J\,, \quad 
\abs{ Q'_s}=2 ^{k_s} \abs{ Q_s}\,,\ s\in J\}\,. 
\end{equation*}
For this collection, the estimate 
\begin{equation*}
\NOrm 
\sum_{\vec\varepsilon,\vec\varepsilon'\in\textup{Sig}_{\vec d}}\sum _{(R,R')\in \mathcal A 
_{n,J,\vec k}}
\overline{\ip b, w_R^{\vec\varepsilon},}\, 
{\ip b ,w_{R'}^{\vec\varepsilon'}, }\,  
( w_{R'}^{\vec\varepsilon}  \cdot \overline{ 
\operatorname T' w ^{\vec\varepsilon }_R }).2. \lesssim   2 ^{-n}\delta _{-1}
\end{equation*}
is then a consequence of (\ref{e.localized}).  This estimate is summed over 
$ J\subset \{1 ,\dotsc, n \}$ and $ \vec k$ to prove (\ref{e.claim1}).

\subsection{ The Lower Bound on  the Riesz Transforms}

\subsubsection*{Properties of Riesz Transforms}
  We need some special properties of Riesz transforms.  
Variants are in the paper of Coifman, Rochberg and Weiss \cite{MR54:843}.

\begin{proposition}\label{p.polynomials}
For each $s$, let $ \operatorname T_s$ be a polynomial in the Riesz transforms 
on $ \mathbb R ^{d_s}$.  Then, we have the inequality 
\begin{equation*}
\norm [\cdots [ \operatorname M_b, \operatorname T_1],\cdots ],\operatorname T_t] 
.2\to 2. 
\lesssim 
\sup _{\vec \jmath } \norm \operatorname C _{\vec \jmath }(b, \cdot ) .2\to 2.\,.
\end{equation*}
The implied constant depends upon the choice of polynomials $ \operatorname T_s$. 
\end{proposition}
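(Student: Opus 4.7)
The plan is to reduce the iterated commutator of polynomial operators down to a linear combination of bounded rearrangements of the elementary Riesz commutators $\operatorname{C}_{\vec\jmath}(b,\cdot)$, using only the Leibniz rule for commutators and the $L^{2}$ boundedness of each $\operatorname{R}_{s,j}$.

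First I would use multilinearity. Writing each $\operatorname{T}_s = \sum_\alpha c_{s,\alpha}\,\operatorname{R}_s^{\alpha}$, where $\operatorname{R}_s^{\alpha}=\operatorname{R}_{s,j_1^\alpha}\cdots\operatorname{R}_{s,j_{n_\alpha}^\alpha}$ is a monomial in the Riesz transforms on $\mathbb{R}^{d_s}$, the iterated commutator $[\cdots[[\operatorname{M}_b,\operatorname{T}_1],\operatorname{T}_2],\cdots,\operatorname{T}_t]$ becomes a finite linear combination (depending only on the polynomials $\operatorname{T}_s$) of iterated commutators in which each slot is occupied by a single monomial $\operatorname{R}_s^{\alpha_s}$. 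It therefore suffices to bound each such monomial-commutator.

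Next I would apply the Leibniz identity $[X,AB]=[X,A]B+A[X,B]$ inductively. For a single monomial $\operatorname{R}_1^{\alpha}=\operatorname{R}_{1,j_1}\cdots\operatorname{R}_{1,j_n}$ this yields
\begin{equation*}
[\operatorname{M}_b,\operatorname{R}_1^{\alpha}]=\sum_{k=1}^{n}
\operatorname{R}_{1,j_1}\cdots\operatorname{R}_{1,j_{k-1}}\,
[\operatorname{M}_b,\operatorname{R}_{1,j_k}]\,
\operatorname{R}_{1,j_{k+1}}\cdots\operatorname{R}_{1,j_n}.
\end{equation*}
The crucial observation is that Riesz transforms acting on different coordinates $s\neq s'$ commute as operators on $L^2(\mathbb{R}^{\vec d})$. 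Consequently when we next take the commutator with $\operatorname{T}_2=\operatorname{R}_2^{\beta}$, the outer $\operatorname{R}_{1,\cdot}$ factors commute with $\operatorname{R}_2^{\beta}$ and the commutator passes through them; applying Leibniz once more to open up $\operatorname{R}_2^{\beta}$ puts the inner commutator $[[\operatorname{M}_b,\operatorname{R}_{1,j_k}],\operatorname{R}_{2,i_l}]$ in the middle, with products of Riesz transforms on either side. Iterating this procedure through all $t$ polynomial slots expresses the full iterated commutator as a finite linear combination
\begin{equation*}
[\cdots[\operatorname{M}_b,\operatorname{T}_1],\ldots,\operatorname{T}_t]=
\sum_{\text{finite}} c_{\vec\jmath,A,B}\,
\operatorname{A}\,\operatorname{C}_{\vec\jmath}(b,\cdot)\,\operatorname{B},
\end{equation*}
where each $\operatorname{A},\operatorname{B}$ is a product of Riesz transforms (possibly on several coordinates) whose total degree is bounded by the sum of the degrees of the $\operatorname{T}_s$.

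Finally I would invoke the fact that Riesz transforms are $L^{2}$-bounded, so that each $\operatorname{A}$ and $\operatorname{B}$ is bounded on $L^{2}(\mathbb{R}^{\vec d})$ with a norm depending only on $\vec d$ and the polynomials $\operatorname{T}_s$. Taking $L^{2}\to L^{2}$ norms term by term yields
\begin{equation*}
\norm [\cdots[\operatorname{M}_b,\operatorname{T}_1],\ldots,\operatorname{T}_t].2\to 2.
\lesssim \sum_{\text{finite}}|c_{\vec\jmath,A,B}|\,\norm \operatorname{A}.2\to 2.\,\norm \operatorname{B}.2\to 2.\,\norm \operatorname{C}_{\vec\jmath}(b,\cdot).2\to 2.
\lesssim \sup_{\vec\jmath}\norm \operatorname{C}_{\vec\jmath}(b,\cdot).2\to 2.,
\end{equation*}
with the implied constant depending only on the $\operatorname{T}_s$ and $\vec d$, as claimed. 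The main obstacle is purely notational bookkeeping in the iterative Leibniz expansion; conceptually everything is driven by the two facts that commutators satisfy Leibniz and that Riesz transforms on distinct coordinates commute, and there is no analytic subtlety beyond the (already used) $L^{2}$ boundedness of the Riesz transforms.
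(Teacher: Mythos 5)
Your argument is correct and follows essentially the same route as the paper, which simply states the Leibniz-type identity $[\operatorname{M}_b,\operatorname{R}_j\operatorname{R}_k]=[\operatorname{M}_b,\operatorname{R}_j]\operatorname{R}_k+\operatorname{R}_j[\operatorname{M}_b,\operatorname{R}_k]$ and leaves the rest implicit. You have correctly supplied the remaining bookkeeping: multilinearity to reduce to monomials, iteration of Leibniz through each slot, the fact that Riesz transforms on distinct coordinates commute so the outer factors pass through subsequent brackets, and the uniform $L^2$ boundedness of the resulting Riesz-transform prefactors and postfactors.
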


This in fact follows from the elementary identity 
\begin{equation*}
[\operatorname M_b , \operatorname R _{j} \operatorname R_k] 
= [\operatorname M_b , \operatorname R _{j}] \operatorname R_k
+\operatorname R_j [\operatorname M_b , \operatorname R _{k}]\,.
\end{equation*}

An operator $ \operatorname T$ which is a polynomial in Riesz transforms is 
a convolution operator, with radial symbol.  Below, we will only describe the symbols 
that we are interested in.

\bigskip

The selection of the operators which are polynomials in Riesz transforms 
is hardly obvious, and we identify their properties in the following Lemma. 

\begin{lemma}\label{l.miracle} 
Given any $ 0<\eta <1$ and  any cone $ C$ in $ \mathbb R ^{d}$, 
there is an  operator $ \operatorname U_C$,
a linear combination of the identity and a polynomial in Riesz transforms on 
$ \mathbb R ^{d}$, with symbols $ \upsilon_C  $ such that 
\begin{equation} \label{e.UPsilon}
\begin{cases}
\abs{ \upsilon _C (\xi )-1}<\eta  & \xi \in C 
\\
\abs{ \upsilon _C  (\xi )}<\eta  & \xi \in -C \,. 
\end{cases}
\end{equation}
  Finally, we have the estimate 
\begin{equation}\label{e.pNorm}
\norm \operatorname U_C .p. \lesssim C_p\,,
\qquad 
1<p<\infty \,.
\end{equation}
The constant $ C_p$ is independent of the choice of the cone $ C$ 
and dimension $ d$.
\end{lemma}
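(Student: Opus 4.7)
The plan is to construct $\operatorname{U}_C$ by specifying its symbol directly on the unit sphere. Since $\operatorname{R}_j$ has Fourier symbol $-i\xi_j/|\xi|$, a polynomial in the Riesz transforms is exactly a Fourier multiplier whose symbol, restricted to $S^{d-1}$, is a polynomial in the coordinate functions; adjoining the identity produces an algebra that is dense in $C(S^{d-1})$ by Stone--Weierstrass.

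First I would build a continuous model $\phi$ on $S^{d-1}$ with $\phi\equiv 1$ on $C\cap S^{d-1}$, $\phi\equiv 0$ on $-C\cap S^{d-1}$, and the antipodal normalization $\phi(\theta)+\phi(-\theta)=1$ everywhere (prescribe $\phi$ on a chosen ``positive'' hemisphere and extend by the relation). Writing $\phi=\tfrac12+g$, the function $g$ is odd, equal to $\pm\tfrac12$ on $\pm C\cap S^{d-1}$, and satisfies $\|g\|_\infty\le\tfrac12$. Apply Stone--Weierstrass on $S^{d-1}$, then symmetrize by averaging against the antipodal map, to obtain an odd polynomial $P$ in the coordinate functions with $\|P-g\|_{L^\infty(S^{d-1})}<\eta$. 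Setting $\upsilon_C=\tfrac12+P(\xi/|\xi|)$ gives the pointwise bounds (\ref{e.UPsilon}) at once, and $\operatorname{U}_C=\tfrac12\operatorname{I}+T_P$ realizes it as a linear combination of the identity with an odd-degree polynomial in Riesz transforms.

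For the uniform $L^p$ estimate the crucial structural point is that $P$ is odd and homogeneous of degree zero, which puts $T_P$ in the scope of the method of rotations. The associated convolution kernel takes the form $\Omega_P(y/|y|)/|y|^d$ for an odd $\Omega_P$ on $S^{d-1}$, and one has the standard identity
$$
T_P f(x)=\tfrac12\int_{S^{d-1}}\Omega_P(\theta)\,H_\theta f(x)\,d\sigma(\theta),
$$
where $H_\theta$ is the one-dimensional Hilbert transform along the direction $\theta$. Minkowski's inequality then delivers
$\|T_P\|_{L^p\to L^p}\le\tfrac12\|H\|_{L^p(\mathbb R)}\|\Omega_P\|_{L^1(S^{d-1})}$, and the inversion formula recovering $\Omega_P$ from the odd multiplier $P$ is bounded on the space of odd spherical functions, so this produces a cone-independent estimate driven by the one-dimensional Hilbert constant $\cot(\pi/2p)$.

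The main obstacle is pushing independence from the cone to independence from the dimension $d$ as well. The method of rotations gives cone-independence essentially for free, but the constants linking the multiplier $P$ to the kernel density $\Omega_P$ on $S^{d-1}$, and the factor $|S^{d-1}|$ hidden in $\|\Omega_P\|_{L^1}$, a priori grow with $d$. One handles this either by exploiting the scale-invariant structure imposed by the antipodal normalization of $g$ to bound $\Omega_P$ uniformly in $d$, or by appealing to Stein's dimension-free $L^p$ estimates for Riesz transforms; since $\vec d$ is fixed throughout the paper, any residual $d$-dependence is harmlessly absorbed into the constants downstream.
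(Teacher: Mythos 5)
Your construction of the odd polynomial $P$ via Stone--Weierstrass (plus antipodal symmetrization) does reproduce the pointwise conditions (\ref{e.UPsilon}) correctly, and the reduction to an odd symbol is the same basic idea the paper uses. But the argument for the $L^p$ bound (\ref{e.pNorm}) has a genuine gap, and it is located exactly in the sentence ``the inversion formula recovering $\Omega_P$ from the odd multiplier $P$ is bounded on the space of odd spherical functions.'' That assertion is false. If $\Omega$ is a degree-$k$ spherical harmonic, the symbol of the principal-value operator $\Omega(y/|y|)/|y|^d$ is $\gamma_k\,\Omega(\xi/|\xi|)$ with $\gamma_k = (-i)^k\pi^{d/2}\Gamma(k/2)/\Gamma((k+d)/2)$, and $|\gamma_k|\sim k^{-d/2}$ as $k\to\infty$. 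So the map from symbol to kernel density blows up like $k^{d/2}$ on degree-$k$ harmonics; there is no $L^\infty\to L^1$ bound for the inverse on all odd functions. Consequently $\|\Omega_P\|_{L^1(S^{d-1})}$ is not controlled by $\|P\|_{L^\infty(S^{d-1})}$, and the Minkowski step in the method of rotations does not close.

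This is precisely why Stone--Weierstrass is the wrong tool here: it is purely qualitative and yields no bound on the degree of $P$, hence no bound on $\|\Omega_P\|_{L^1}$. The paper avoids this by an explicit construction: convolve the odd sign symbol $\operatorname{sign}(\xi\cdot\xi_C)$ with the Poisson kernel at radius $1-c\eta$, then truncate the zonal-harmonic expansion. That gives an approximating odd polynomial of degree $m_0\lesssim\eta^{-1}\log(1/\eta)$, and---more importantly---the Poisson factor $(1-c\eta)^k$ forces geometric decay of the degree-$k$ components, so the $k^{d/2}$ loss in the symbol-to-kernel inversion is absorbed and $\|\Omega\|_{L^1(S^{d-1})}$ stays under control. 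That degree/decay control is the essential content that your proposal is missing. (Stein's dimension-free Riesz-transform bounds, which you mention as a fallback, concern the individual $R_j$, not arbitrary high-degree polynomials in them, and do not supply the needed estimate either.)
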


\begin{proof}
This depends upon particular  properties of spherical harmonics and zonal 
polynomials.  We were aided by \cite {MR1805196} in our search for this proof. 

It suffices to prove the following.  For a choice of cone $ C$ in $ \mathbb R ^{d}$, with direction 
$ \xi _C$, and $ 0<\eta <1$, we can choose operator $\operatorname U $ 
with symbol $ \upsilon  $ which is  odd with respect to $ \xi _{C}$, 
$ \norm \upsilon . \infty . \le 2$, and 
\begin{equation} \label{e.Upsilon}
\begin{cases}
\abs{ \upsilon (\xi )+1} < \eta  & \xi \in -C \,,  
\\
\abs{ \upsilon (\xi )-1}<\eta  & \xi \in C\,.
\end{cases}
\end{equation}
Finally, $ \upsilon $ restricted to the unit sphere is a polynomial in 
$ \xi _j$ for $ (\xi _1,\xi _2,\dotsc, \xi _d)\in S ^{d-1}$.  We 
will see that the degree is at most $ \lesssim \eta  ^{-1}  \log 1/ \eta $.

Then, $ \operatorname U$ is in fact a polynomial in Riesz transforms.  Since 
$ \upsilon $ is odd, the method of rotations applies to provide us with an estimate 
of $ \norm \operatorname U.p. \le C_p$, where $ C_p$ is absolute for $ 1<p<\infty $. 
To get an operator with symbol as in our Lemma, we add the identity  operator to 
$ \operatorname U$. 

\medskip 

We obtain the symbol $ \upsilon $ by employing the Poisson kernel in the 
ball in $ \mathbb R ^{d}$, and as well an expansion of this kernel 
into zonal harmonics.  Let us recall the properties we need.  
The Poisson kernel in $ \mathbb R ^{d}$  is 
\begin{equation*}
P (x, \zeta  )=\frac {1- \abs{ x} ^2 } {\abs{ x-\zeta } ^{d}} \,,
\qquad 
\abs{ x}<1\,,\, \abs{ \zeta }=1 \,. 
\end{equation*}

A homogeneous harmonic polynomial  $ p$ of degree  $ m$ on $ \mathbb R ^{d}$ has 
the reproducing formula \cite[p. 97] {MR1805196} 
\begin{equation*}
p (x)= \abs{ x} ^{m} \int _{S ^{d-1}} p (\zeta ) Z_m (x, \zeta ) 
\; \sigma (d \zeta )\,. 
\end{equation*}
Here, $ \sigma $ denotes normalized Lebesgue measure on the unit 
sphere $ S ^{d-1}$ in $ \mathbb R ^{d}$. 
The polynomial $ Z_m (x, \zeta )$ is a \emph{zonal polynomial of degree $ m$}. 
It follows that the Poisson kernel admits an expansion in terms of these 
polynomials 
\begin{equation*}
P (x, \xi ) = \sum _{m=0} ^{\infty } Z_m (x, \zeta )\,. 
\end{equation*}
This series is absolutely convergent sum in the interior of the unit ball, 
thanks to the elementary estimate 
\begin{equation}\label{e.Z<}
\abs{ Z _{m} (x,\zeta )} \lesssim m ^{d-2} \abs{ x} ^{m}\,. 
\end{equation}

It is a basic property of the zonal polynomials that they are only a function of 
$ \abs{ x} $ and $ x \cdot \zeta $.  Indeed, they can be expanded as 
\begin{equation*}
Z _{m} (x,\zeta )=\sum _{k=0} ^{[m/2]} c _{k,m} (x \cdot \zeta ) ^{m-2k} \abs{ x} ^{2k}\,. 
\end{equation*}
Here, $ c _{k,m}$ are known real coefficients. 
In particular, for a function $ \widetilde \upsilon $ on the unit 
sphere that is odd, the new function  
\begin{equation*}
\int \widetilde \upsilon (\zeta ) Z_m (x,\zeta ) \; d \sigma (\zeta )
\end{equation*}
is also odd, for $ \abs{ x}$ held fixed.

\smallskip 

To construct our operator $ \operatorname U$, via its symbol $ \upsilon $, 
recall that the operator $ \operatorname U$ is associated to a 
cone $ C$ with direction $ \xi _C$.  On the cone $ C$ and the opposite 
cone $ -C$ we require rather precise information about the symbol $ \upsilon $.
Outside of these cones we only require an absolute upper bound 
on $ \upsilon $.  Hence, we have some freedom 
in taking an initial approximate to the symbol $ \upsilon $.  In what 
follows, we concentrate on defining the symbol on the sphere $ S ^{d-1}$. 

Take as an initial approximate $ \widetilde {\widetilde \upsilon} (\xi )= 
\operatorname {sign} (\xi \cdot \xi _{C})$.  For a small constant $ c$, 
consider the function 
\begin{equation*}
\widetilde \upsilon (\theta ) 
\eqdef 
\int    \widetilde {\widetilde \upsilon} (\zeta ) P ((1-c \eta ) \theta ,\zeta )
\; d \sigma(\zeta) \,. 
\end{equation*}
This function will be non-negative, 
odd, bounded in absolute value 
by $ 1$, and satisfy (\ref{e.Upsilon}).   It is not however a polynomial 
in spherical harmonics.  

But each function 
\begin{equation*}
\upsilon _m (\theta ) \eqdef \int    \widetilde {\widetilde \upsilon} (\zeta ) Z_m ((1-c \eta ) \theta ,\zeta )
\; d\sigma(\zeta) 
\end{equation*}
is also odd, as we have already noted.  By 
(\ref{e.Z<}), we have the estimate 
\begin{equation*}
\NOrm \sum _{m=m_0} ^{\infty } \upsilon _m . L ^{\infty } (S ^{d-1}). 
\le \eta /4\,, 
\qquad m_0=C (\log 1/ \eta )/\eta 
\end{equation*}
where $ C$ depends upon the dimension $ d$.  Therefore, the function 
$ \upsilon $ we need can be taken to be 
\begin{equation*}
\sum _{m=0} ^{m_0} \upsilon _m \,. 
\end{equation*}
Our proof is complete. 

\end{proof}
 
\subsubsection*{The Selection of a Test Function}

We continue to assume that the symbol $ b$ satisfies $ \norm b .\textup{BMO}.=1$ 
while $  \norm b .\textup{BMO} _{-1}.<\delta _{-1}$.  We follow 
many of the initial stages of the proof of the lower bound on the Cone norm.
We choose cones $ D_s\subset C_s$, and cone operators $ \operatorname T _{D_s}, \operatorname T _{C_s}$, 
for $ 1\le s \le t$ just as in Lemma~\ref{l.coneSelection}.

We continue to use the notations $ \mathcal U$, $ \mathcal V$ and $ \mathcal W$, thus 
 $ \abs{\operatorname {sh} (\mathcal U)} \simeq 1$ and 
\begin{equation*}
\sum _{R\in \mathcal U} \abs{ \ip b, w_R,} ^2 \simeq \abs{ \operatorname {sh} (\mathcal U)}. 
\end{equation*}
$ \mathcal V$ and $ \mathcal W$ are defined as in (\ref{e.V}), and 
 $ \beta =\operatorname P _{\mathcal U} b$. 
 As before, we set $  \gamma =\operatorname T _{D_1}\cdots \operatorname T _{D_t}
 \beta $. 

For $ 0<\eta <1$ to be chosen, apply Lemma~\ref{l.miracle}, obtaining 
operators $ \operatorname T_s$ which are a linear combinations of the
identity and polynomials in Riesz transforms on $ \mathbb R ^{d_s}$ which 
approximate the projection operator $\operatorname  P _{C_s}$ in the sense of that 
Lemma.   Let us see that we have the estimate 
\begin{equation}\label{e.TTT}
\norm [\operatorname T _{1}, \cdots [ 
\operatorname T _{t},\operatorname M_\beta ] \cdots ] \overline\gamma .2. \gtrsim 1\,.
\end{equation}
The commutator is a linear combination of $ 2^t$ terms of the form 
\begin{equation*}
\operatorname T[\beta  \operatorname T' \overline\gamma ]
\end{equation*}
where $ \operatorname T$ and $ \operatorname T'$ are either the identity or 
a composition of the operators $\operatorname  T_{s}$.  
If $ \operatorname T'$ is not the identity, it follows that the symbol of $ \operatorname T'$
is at most $ \eta$ on the Fourier support of $ \overline\gamma $.  Therefore, we can estimate 
\begin{align*}
\norm \operatorname T[\beta  \operatorname T' \overline\gamma ] .2. 
\lesssim   \norm \beta  \operatorname T' \overline\gamma .2. 
 \lesssim  \norm \beta .4. \, \norm \operatorname T' \overline\gamma .4.
 \lesssim \eta ^{1/3}. 
\end{align*}
This last estimate follows from $ \norm \operatorname T' \overline\gamma .2. \lesssim \eta $, 
while $  \norm \operatorname T' \overline\gamma .8. \lesssim 1$.  This point is imperative, 
and follows from the uniform $ L^p$ bounds we obtain from Lemma~\ref{l.miracle}.

This leaves the term $ \operatorname T_1 \cdots \operatorname T_t \beta \overline\gamma $. 
But, for a sufficiently small choice of $ \eta $, 
we are free to use the same argument as in (\ref{e.PCCC}). This proves (\ref{e.TTT}). 

\smallskip

It then follows from Proposition~\ref{p.polynomials} that for some choice 
of Riesz transforms $ \operatorname R _{j_s}$ on $ \mathbb R ^{d_s}$  
we have 
\begin{equation}\label{e.RRR}
\begin{split}
\norm  \operatorname C( \beta , \overline\gamma ). 2.& \gtrsim 1\,, 
\\
\textnormal{ where }\operatorname C (f,g) \eqdef 
 [\operatorname R _{j_1}, &\cdots [ 
\operatorname R _{j_t}, \operatorname M_ {f}  ] \cdots ]g\,.
\end{split}
\end{equation}

Now, it also follows that 
\begin{equation}\label{e.JOURNE}
\norm \operatorname C(\operatorname P _{\mathcal V} b,\overline\gamma ) .2. 
\lesssim \delta _{J} ^{1/4}\,. 
\end{equation}
Indeed, we can appeal to the same argument as used to prove (\ref{e.Journe}).  

Finally, we claim that 
\begin{equation}\label{e.PARA}
\norm \operatorname C(\operatorname P _{\mathcal W} b,\overline\gamma ) .2. 
\lesssim \delta _{-1}\,. 
\end{equation}
This estimate requires the same argument as for  (\ref{e.Paraprod}), 
plus an additional estimate; the 
details are below. 
The three inequalities (\ref{e.RRR}), (\ref{e.JOURNE}) and (\ref{e.PARA}) 
are then combined in in the same manner as in the proof of the lower bound 
on the Cone norm to complete  the proof.

\subsubsection*{Proof of (\ref{e.PARA})}

The different quantitative estimates we have for paraproducts are 
brought to bear on this estimate.  
First, we expand the expression $\operatorname C
(\operatorname P_{\mathcal W}b,\gamma )$ into the sum 
of commutators on different pairs of wavelets.  This sum is further written as 
$ D_1+D_2$, where we define $ D_1$ explicitly here. 
\begin{equation}\label{e.notComplete}
\begin{split}
D_1 &\eqdef
\sum_{\vec\varepsilon,\vec\epsilon\in\textup{Sig}_{\vec d}}
\sum _{(R,R')\in \mathcal A} 
\overline{\ip \gamma , w_R^{\vec\varepsilon},}\, \ip b ,w_{R'}^{\vec\epsilon}, 
\, \operatorname C(w_{R'}^{\vec\epsilon}, \overline{ w_R^{\vec\varepsilon'}})
\\
\mathcal A &\eqdef \{ (R,R')\mid 
R\in \mathcal U\,,\ R'\not\subset V\,, \abs{ Q_s'}\le 64 \abs{ Q_s}\,, 1\le s\le t\}\,.
\end{split}
\end{equation}
This is the part of the commutator that most closely resembles the part 
arising from the commutator arising from the Cone operators.

It is essential to observe that this last sum can be written as a finite sum of compositions of Riesz transforms and the 
``paraproducts'' in Theorem~\ref{t.Ud}, 
and in particular  the more technical estimate (\ref{e.localized}),
applied to the functions $\operatorname P_{\mathcal U}b $ and $ \operatorname P_{\mathcal W}b$.  We also comment that the Riesz transforms applied to the wavelet element $w_R^{\vec\varepsilon}$ do not substantially change the localization properties of the wavelet, and thus the Riesz transforms do not spoil the estimates that appear in Theorem~\ref{t.Ud}.  
This sum varies of choices of $\vec k$ with $ \norm\vec k.\infty.\leq 8$, and
arbitrary $ J\subset \{
1,\dotsc,t\}$. (The subset $ J$ consists of those coordinates  $ s$ for which 
$ \abs{ Q_s} =2 ^{k_s} \abs{ Q'_s}$.)

We will need to decompose the collection $ \mathcal A$ into appropriate
parts to which this estimate applies.  That is the purpose of this definition.
For an integer $ n\ge 1$, take 
\begin{equation*}
\gamma  _{n} \eqdef \sum_{\vec\varepsilon\in\textup{Sig}_{\vec d}}\,\sum _{\substack{R\subset U\\ 
2 ^{n-1}\le 
\textup{Emb} (R)\le 2 ^{n}}} \ip \gamma , w_R^{\vec\varepsilon}, w_R^{\vec\varepsilon}
\end{equation*}
We claim that 
\begin{equation}\label{e.claimRiesz}
\norm \operatorname  C(\operatorname P_{\mathcal W}b,\overline{\gamma _{n}}).2. \lesssim 2 ^{-n} \delta _{-1}\,. 
\end{equation}
It follows from Lemma~\ref{l.journed-1} that we have the estimate 
\begin{equation}\label{e.alpha<}
\norm \gamma _{n}. \textup{BMO}(\mathbb R^{\vec d}).
\lesssim 2 ^{2t n} \delta _{-1}\,,
\end{equation}
indeed, this is the point of this definition. From other parts of the expansion of the Riesz commutator  we need to find some decay in $ n$.

Nevertheless, from this estimate and the  upper bound on Riesz commutator norms, we have 
the estimate 
\begin{equation*}
\norm 
\operatorname  C(\operatorname P_{\mathcal W}b,\overline{\gamma_{n}}).2. \lesssim 
\norm b. \textup{BMO}(\mathbb R^{\vec d}). \norm \gamma_{n}.2.
\lesssim 2 ^{2t n} \delta _{-1}.
\end{equation*}
We use this estimate for $ n<20$, say.

For $ n\ge20$, $ R\in \mathcal  U$ with $2 ^{n-1}\le 
\textup{Emb} (R)\le 2 ^{n} $, and rectangle $ R'$ with $ (R,R')\in \mathcal A$, 
it follows that we must have $ 2 ^{n-9}R\cap R'=\emptyset$. 
That is, (\ref{e.Localized}) is satisfied with the value of $ A$ in that 
display being $ A\simeq 2 ^{n}$ for $ n\ge20$.  Thus, we conclude that 
\begin{equation*}
\norm 
\operatorname  C(\operatorname P_{\mathcal W}b ,\overline{\gamma_{n}}).2. \lesssim 
2 ^{-50n} \delta _{-1},\qquad n\ge 20\,. 
\end{equation*}
This completes our proof of (\ref{e.claimRiesz}), and the proof  estimate (\ref{e.Paraprod}) .

\bigskip 

It remains to estimate the term $ D_2$.  The principal tool here is the estimate 
for Riesz commutators given in Lemma~\ref{l.quantReisz}, and in order to 
apply this lemma, as well as take advantage of our remaining freedom 
to select the $ \delta _{-1}$ norm, we need a sophisticated decomposition of the sum that 
controls $ D_2$.  That is the point of these next definitions.

Let $ m$ be an integer.  For a non empty subset $ J\subset \{1,\dotsc,t\}$, 
and choices of integers $ \vec a= (a_j)_{j\in J}$ with  $a_j\ge 7$, 
we define 
\begin{equation}\label{e.D2} 
\begin{split}
D (m,J,\vec a) & \eqdef 
\sum_{\vec\varepsilon,\vec\varepsilon'\in\textup{Sig}_{\vec d}}
\sum _{(R,R')\in \mathcal A (m,J,\vec a)} 
\overline{\ip \gamma , w_R^{\vec\varepsilon},}\, \ip b ,w_{R'}^{\vec\varepsilon'}, 
\, \operatorname C(w_{R'}^{\vec\varepsilon'},  \overline{w_R^{\vec\varepsilon}})
\\
\mathcal A (m,J,\vec a) &\eqdef \{ (R,R')\mid 
R\in \mathcal U\,,\ R'\not\subset V\,;\, \abs{ Q_s'}\le 64 \abs{ Q_s}\,, s\not\in J 
\,; \ 
\\
& \qquad 
\abs{ Q_s'}=2 ^{a} \abs{ Q_s}\,,\ s\in J\,;\,  
  2 ^{m}\le \operatorname {Emb} (R) \le 2 ^{m+1}
\}\,.
\end{split}
\end{equation}
With this definitions, we will have 
\begin{equation*}
D_2=\sum _{m, J,\vec a} D (m,J,\vec a) \,.
\end{equation*}
 The estimate below holds. 
\begin{equation}\label{e.D<}
\norm D (m,J,\vec a) .2. \lesssim  2 ^{-m- \sum _{s} a_s} \delta _{-1}\,. 
\end{equation}
This is summable over the parameters $ m, J, \vec a$, and so completes the proof 
of the estimate for $ D_2$. 

\smallskip 

Recall that we have the essential consequence of Journ\'e's Lemma.  
\begin{equation}\label{e.Gm}
\NOrm \sum _{\substack{R\in \mathcal U\\
2 ^{m}\le \operatorname {Emb} (R) \le 2 ^{m+1}}} 
\sum _{\varepsilon \in \operatorname {Sig} _{\vec d}}
\ip \gamma , w ^{\varepsilon } _{R}, w ^{\varepsilon } _{R} . \textup{BMO}. 
\lesssim 2 ^{C _{\vec d}\, m} \delta _{-1}\,. 
\end{equation}

The first subcase occurs when we have 
\begin{equation}\label{e.s1}
\max _{s\in J} a_s\le \tfrac m4 \,. 
\end{equation}
It follows from the definition of embeddedness 
that $  Q_{s'} \cap 2 ^{m/4} Q_s=\emptyset $ 
for all $ (R,R')\in \mathcal A (m,J,\vec a)$.  Therefore, the function 
\begin{equation} \label{e.s11}
2 ^{-N m}
\sqrt {\abs{ Q _{s'}}}\operatorname C\,(w_{R'}^{\vec\varepsilon'}, \overline{ w_R^{\vec\varepsilon}})
\end{equation}
is adapted to $ Q _{s}$, where $ N\ge 1$ can be taken arbitrarily.  
This sum can be understood as a paraproduct, with symbol given by 
$ \operatorname P _{\mathcal W}b$, applied to $ \gamma $. 
Zeros fall on $ \gamma $ for those coordinates $ s\in J$.  
Using (\ref{e.Gm}) and the estimate (\ref{e.s1}), we see that 
(\ref{e.D<}) holds in this case. 

\smallskip 

The second case is when (\ref{e.s1}) fails in any coordinate, say $ s_0\in J$. 
In this instance, we see that the sum we are considering in that coordinate 
is of the type considered in Lemma~\ref{l.quantReisz}.  That is, the sum is 
an operator whose paraproduct norm, as defined in (\ref{e.paraNorm}) is 
at most $ 2 ^{-N a _{s_0}}$, for arbitrarily large $ N$.  
In all other coordinates, the sum is an operator with paraproduct norm 
at most a constant.  The tensor product of paraproducts is a bounded operator, 
therefore in this case, we have 
\begin{equation*}
\norm D (m,J,\vec a).2. \lesssim 2 ^{- N \sum a_s}  2 ^{C _{\vec d} m} \delta _{-1}
\lesssim 2 ^{-2 \sum a_s} \delta _{-1}\,. 
\end{equation*}
Here of course, we again rely upon (\ref{e.Gm}). 
This completes the proof of (\ref{e.D<}).   This in turn completes the proof of the lower bound on the norm of multiparameter Riesz commutators. 

 \begin{bibsection} 
 \begin{biblist} 
 
 \bib{MR1349230}{article}{
   author={Auscher, Pascal},
   author={Taylor, Michael E.},
   title={Paradifferential operators and commutator estimates},
   journal={Comm. Partial Differential Equations},
   volume={20},
   date={1995},
   number={9-10},
   pages={1743--1775},
   issn={0360-5302},
   review={\MR{1349230 (96j:47047)}},
}

 \bib{MR1805196}{book}{
    author={Axler, Sheldon},
    author={Bourdon, Paul},
    author={Ramey, Wade},
     title={Harmonic function theory},
    series={Graduate Texts in Mathematics},
    volume={137},
   edition={2},
 publisher={Springer-Verlag},
     place={New York},
      date={2001},
     pages={xii+259},
      isbn={0-387-95218-7},
    review={MR1805196 (2001j:31001)},
}

\bib{math.CA/0412174}{article}{
    title={Variations on the Theme of Journe's Lemma},
    author={Cabrelli, Carlos},
    author={Michael Lacey},
    author={Molter, Ursula},
    author={Pipher,  Jill   C},
   journal={Houston J. Math.},
   volume={32},
   date={2006},
   number={3},
   pages={833--861 (electronic)},
   issn={0362-1588},
   review={\MR{2247912}},
}

\bib{carleson-example}{article}{
	author={Carleson, L.},
	title={A counterexample for measures bounded on $H^p$ spaces for the bidisk},
	journal={Mittag-Leffler Rep. No. 7, Inst. Mittag-Leffler},
	year={1974},
	}

\bib{cf1}{article}{
    author={Chang, Sun-Yung A.},
    author={Fefferman, Robert},
     title={Some recent developments in Fourier analysis and $H\sp p$-theory
            on product domains},
   journal={Bull. Amer. Math. Soc. (N.S.)},
    volume={12},
      date={1985},
    number={1},
     pages={1\ndash 43},
      issn={0273-0979},
    review={MR 86g:42038},
}

\bib{cf2}{article}{
    author={Chang, Sun-Yung A.},
    author={Fefferman, Robert},
     title={A continuous version of duality of $H\sp{1}$ with BMO on the
            bidisc},
   journal={Ann. of Math. (2)},
    volume={112},
      date={1980},
    number={1},
     pages={179\ndash 201},
      issn={0003-486X},
    review={MR 82a:32009},
}

\bib{MR1225511}{article}{
   author={Coifman, R.},
   author={Lions, P.-L.},
   author={Meyer, Y.},
   author={Semmes, S.},
   title={Compensated compactness and Hardy spaces},
   language={English, with English and French summaries},
   journal={J. Math. Pures Appl. (9)},
   volume={72},
   date={1993},
   number={3},
   pages={247--286},
   issn={0021-7824},
   review={\MR{1225511 (95d:46033)}},
}

\bib{MR54:843}{article}{
    author={Coifman, R. R.},
    author={Rochberg, R.},
    author={Weiss, Guido},
     title={Factorization theorems for Hardy spaces in several variables},
   journal={Ann. of Math. (2)},
    volume={103},
      date={1976},
    number={3},
     pages={611\ndash 635},
    review={MR 54 \#843},
}

\bib{MR86f:32004}{article}{
    author={Fefferman, R.},
     title={A note on Carleson measures in product spaces},
   journal={Proc. Amer. Math. Soc.},
    volume={93},
      date={1985},
    number={3},
     pages={509\ndash 511},
      issn={0002-9939},
    review={MR 86f:32004},
}

\bib{MR81c:32016}{article}{
    author={Fefferman, R.},
     title={Bounded mean oscillation on the polydisk},
   journal={Ann. of Math. (2)},
    volume={110},
      date={1979},
    number={2},
     pages={395\ndash 406},
      issn={0003-486X},
    review={MR 81c:32016},
}

\bib{MR90e:42030}{article}{
    author={Fefferman, Robert},
     title={Harmonic analysis on product spaces},
   journal={Ann. of Math. (2)},
    volume={126},
      date={1987},
    number={1},
     pages={109\ndash 130},
      issn={0003-486X},
    review={MR 90e:42030},
} 

\bib{sarahlacey}{article}{
    author={Ferguson, Sarah H.},
    author={Lacey, Michael T.},
     title={A characterization of product BMO by commutators},
   journal={Acta Math.},
    volume={189},
      date={2002},
    number={2},
     pages={143\ndash 160},
      issn={0001-5962},
    review={1 961 195},
}


\bib{MR88d:42028}{article}{
    author={Journ{\'e}, Jean-Lin},
     title={Calder\'on-Zygmund operators on product spaces},
   journal={Rev. Mat. Iberoamericana},
    volume={1},
      date={1985},
    number={3},
     pages={55\ndash 91},
      issn={0213-2230},
    review={MR 88d:42028},
}  

\bib{MR87g:42028}{article}{
    author={Journ{\'e}, Jean-Lin},
     title={A covering lemma for product spaces},
   journal={Proc. Amer. Math. Soc.},
    volume={96},
      date={1986},
    number={4},
     pages={593\ndash 598},
      issn={0002-9939},
    review={MR 87g:42028},
}

\bib{MR949001}{article}{
    author={Journ{\'e}, Jean-Lin},
     title={Two problems of Calder\'on-Zygmund theory on product-spaces},
   journal={Ann. Inst. Fourier (Grenoble)},
    volume={38},
      date={1988},
    number={1},
     pages={111\ndash 132},
      issn={0373-0956},
    review={MR949001 (90b:42031)},
}

\bib{MR837350}{article}{
    author={Krikeles, Basil C.},
     title={Tensor products of multilinear operators},
   journal={J. Math. Anal. Appl.},
    volume={116},
      date={1986},
    number={1},
     pages={230\ndash 244},
      issn={0022-247X},
    review={MR837350 (88j:47038)},
}

\bib{math.CA/0502334}{article}{
    title={{Paraproducts in One and Several Parameters}},
    author={ Michael T Lacey},
	author={Metcalfe,  Jason},
   journal={Forum Math.},
   volume={19},
   date={2007},
   number={2},
   pages={325--351},
   issn={0933-7741},
   review={\MR{2313844}},
}

\bib{math.CA/0502336}{article}{
    title={{Commutators with Riesz Potentials in One and Several
        Parameters}},
    author={Lacey, Michael T},
   journal={Hokkaido Math. J.},
   volume={36},
   date={2007},
   number={1},
   pages={175--191},
   issn={0385-4035},
   review={\MR{2309828}},
}

\bib{math.CA/0310348}{article}{
    title={{Hankel Operators in Several Complex Variables and Product
        $\textup{BMO}$}},
	author={Michael T Lacey},
	author={Terwilleger,  Erin},
	journal={Houston J Math, to appear}, 
	eprint={arXiv:math.CA/0310348},
}

\bib{MR2176015}{article}{
   author={Lacey, Michael T.},
   author={Terwilleger, Erin},
   author={Wick, Brett D.},
   title={Remarks on product VMO},
   journal={Proc. Amer. Math. Soc.},
   volume={134},
   date={2006},
   number={2},
   pages={465--474 (electronic)},
   issn={0002-9939},
   review={\MR{2176015 (2006k:47060)}},
}

\bib{MR1373281}{article}{
    author={Li, Song-Ying},
     title={Characterization of the boundedness for a family of commutators
            on $L\sp p$},
   journal={Colloq. Math.},
    volume={70},
      date={1996},
    number={1},
     pages={59\ndash 71},
      issn={0010-1354},
    review={MR1373281 (97g:42012)},
}

\bib{MR1009177}{article}{
    author={Meyer, Yves},
     title={Wavelets and operators},
 booktitle={Analysis at Urbana, Vol.\ I (Urbana, IL, 1986--1987)},
    series={London Math. Soc. Lecture Note Ser.},
    volume={137},
     pages={256\ndash 365},
 publisher={Cambridge Univ. Press},
     place={Cambridge},
      date={1989},
    review={MR1009177 (90i:42043)},
}

\bib{MR1085487}{book}{
    author={Meyer, Yves},
     title={Ondelettes et op\'erateurs. I},
  language={French},
    series={Actualit\'es Math\'ematiques. [Current Mathematical Topics]},
      note={Ondelettes. [Wavelets]},
 publisher={Hermann},
     place={Paris},
      date={1990},
     pages={xii+215},
      isbn={2-7056-6125-0},
    review={MR1085487 (93i:42002)},
}

\bib{camil1}{article}{
 author={Mucalu, Camil},
 author={Pipher, Jill},
 author={Tao, Terrance},
 author={Thiele, Christoph},
 title={Bi-parameter paraproducts},
   journal={Acta Math.},
   volume={193},
   date={2004},
   number={2},
   pages={269--296},
   issn={0001-5962},
   review={\MR{2134868 (2005m:42028)}},
}

 \bib{camil2}{article}{
 author={Mucalu, Camil},
 author={Pipher, Jill},
 author={Tao, Terrance},
 author={Thiele, Christoph},
 title={Multi-parameter paraproducts},
 journal={Rev. Mat. Iberoam.},
   volume={22},
   date={2006},
   number={3},
   pages={963--976},
   issn={0213-2230},
   review={\MR{2320408 (2008b:42037)}},
}

\bib{nehari}{article}{
    author={Nehari, Zeev},
     title={On bounded bilinear forms},
   journal={Ann. of Math. (2)},
    volume={65},
      date={1957},
     pages={153\ndash 162},
    review={MR 18,633f},
}

%

%

 \bib{MR1756958}{article}{
    author={Petermichl, Stefanie},
     title={Dyadic shifts and a logarithmic estimate for Hankel operators
            with matrix symbol},
  language={English, with English and French summaries},
   journal={C. R. Acad. Sci. Paris S\'er. I Math.},
    volume={330},
      date={2000},
    number={6},
     pages={455\ndash 460},
      issn={0764-4442},
    review={MR1756958 (2000m:42016)},
}


\bib{MR88a:42019}{article}{
    author={Pipher, Jill},
     title={Journ\'e's covering lemma and its extension to higher
            dimensions},
   journal={Duke Math. J.},
    volume={53},
      date={1986},
    number={3},
     pages={683\ndash 690},
      issn={0012-7094},
    review={MR 88a:42019},
}


 \bib{MR0467384}{article}{
    author={Uchiyama, Akihito},
     title={On the compactness of operators of Hankel type},
   journal={T\^ohoku Math. J. (2)},
    volume={30},
      date={1978},
    number={1},
     pages={163\ndash 171},
      issn={0040-8735},
    review={MR0467384 (57 \#7243)},
}

  \end{biblist} 
 \end{bibsection} 
\end{document}